\theoremstyle{remark}
\theoremstyle{plain}
\newtheorem{thm}{Theorem}[section]
\newtheorem{lem}[thm]{Lemma}
\newtheorem{prop}[thm]{Proposition}
\newtheorem{defi}[thm]{Definition}
\newtheorem{cor}[thm]{Corollary}
\newtheorem{rem}[thm]{Remark}
\newcommand{\Z}{\mathbb{Z}}
\newcommand{\Q}{\mathbb{Q}}
\newcommand{\R}{\mathbb{R}}
\newcommand{\inv}{\tau}
\newcommand{\oinv}{\sigma}
\newcommand{\x}{\mathbf{x}}
\newcommand{\y}{\mathbf{y}}
\newcommand{\gr}{{\rm{gr}}}
\newcommand{\boa}{\bm{\alpha}}
\newcommand{\bob}{\bm{\beta}}
\newcommand{\s}{\mathfrak{s}}
\newcommand{\Sym}{{\rm{Sym}}}
\newcommand{\T}{\mathbb{T}}
\newcommand{\Spinc}{{\rm{Spin}^c}}
\newcommand{\Ker}{{\rm{Ker}}}
\newcommand{\Imm}{{\rm{Im}}}
\newcommand{\id}{{\rm{id}}}
\newcommand{\spinc}{\mbox{spin}^c}
\newcommand{\HFB}{{\rm {HFB}}^-}
\newcommand{\CFB}{{\rm {CFB}}^-}
\newcommand{\deltaover}{{\overline {\delta}}}
\newcommand{\deltaunder}{{\underline {\delta}}}
\newcommand{\Cone}{{\rm {Cone}}}
\newcommand{\Field}{{\mathbb {F}}}
\newcommand{\F}{{\mathbb {F}}}
\newcommand{\CFm}{{\rm {CF}}^-}
\newcommand{\HFm}{{\rm {HF}}^-}
\newcommand{\HFinf}{{\rm {HF}}^{\infty}}
\newcommand{\HFa}{{\widehat{{\rm {HF}}}}}
\newcommand{\DD}{{\mathcal {H}}}
\newcommand{\defin}[1]{{\bf\emph{#1}}}
\newcommand{\Hom}{\rm{Hom}}
\newcommand{\kotojel}{{\mbox{-}}}
\begin{document}
\title[Connected  Floer homology]{Connected Floer
  homology of covering involutions}

\author{Antonio Alfieri}
\address{R\'enyi Institute of Mathematics, 1053. Budapest, Re\'altanoda
utca 13-15. Hungary}
\email{alfieriantonio90@gmail.com}

\author{Sungkyung Kang}
\address{Mathematical Institute, University of Oxford, UK, and}
\address{Institute of Mathematical Sciences, the Chinese University of Hong Kong, Shatin, Hong Kong}
\email{sungkyung38@icloud.com}

\author{Andr\'{a}s I. Stipsicz} 
\address{R\'enyi Institute of Mathematics, 1053. Budapest, Re\'altanoda
utca 13-15. Hungary}
\email{stipsicz.andras@renyi.mta.hu}

\begin{abstract} 
Using the covering involution on the double branched cover of $S^3$
branched along a knot, and adapting ideas of Hendricks-Manolescu and
Hendricks-Hom-Lidman, we define new knot invariants and apply them to
deduce novel linear independence results in the smooth concordance
group of knots.
\end{abstract}
\maketitle

\section{Introduction}
\label{sec:intro}

Concordance questions of knots has been effectively studied by
4-dimensional topological methods. Indeed, for a knot $K$ in the
three-sphere $S^3$ consider the double branched cover $\Sigma (K)$ of
$S^3$ branched along $K$. If $K$ is a slice knot (i.e. bounds a
smoothly embedded disk in the 4-disk $D^4$) then $\Sigma (K)$ bounds a
four-manifold $X$ having the same rational homology as $D^4$: this $X$
can be chosen to be the double branched cover of $D^4$ along the slice
disk. The existence of such four-manifold then can be obstructed by
various methods, leading to sliceness obstructions of knots.  For
example, Donaldson's diagonalizability theorem applies in case $\Sigma
(K)$ is known to bound a negative definite four-manifold with
intersection form which does not embed into the same rank diagonal
lattice. This line of reasoning was used by Lisca in his work about
sliceness properties of 2-bridge knots, see \cite{greenejabuka,
  anapretzel, sliceribbonlisca}.  A numerical invariant (in the same
spirit) was introduced by Manolescu-Owens~\cite{ManOwens} utilizing 
the Ozsv\'ath-Szab\'o correction term of the
unique spin structure of $\Sigma (K)$.

Different knots might admit diffeomorphic double branched covers,
though; for example, if $K$ and $K'$ differ by a Conway mutation, then
$\Sigma (K)$ and $\Sigma (K')$ are diffeomorphic. This implies that if
$K$ is slice, all slice obstructions coming from the above strategy
must vanish for $K'$ as well. A long-standing problem of this type was
whether the Conway knot is slice; it admits a mutant (the
Kinoshita-Terasaka knot) which is slice, hence merely considering the
double branched cover will not provide sliceness obstruction.  (The
fact that the Conway knot is not slice has been recently proved by
Piccirillo \cite{piccirillo}, relying on four-dimensional topological methods
and results from Khovanov homology.)

The information we neglect in the above approach is that the
three-manifold $\Sigma (K)$ (viewed as the double branched cover of
$S^3$ along $K$) comes with a self-diffeomorphism $\inv$, where pairs
of points in $\Sigma (K)$ mapping to the same point of $S^3$ are
interchanged by $\inv$. In this paper we introduce modifications of
the usual Heegaard Floer homology groups of $\Sigma (K)$ which take
this $\Z /2\Z$-action into account, leading to new knot invariants.

Heegaard Floer homology associates to a closed, oriented,
smooth three-manifold a finitely generated $\Field [U]$-module $\HFm
(Y)$ (where $\Field [U]$ is the polynomial ring over the field
$\Field$ of two elements):
it is the homology of a chain complex $(\CFm (Y), \partial )$
(defined up to chain homotopy equivalence) and the homology naturally splits
according to the spin$^c$ structures of $Y$ as
\[
\HFm (Y)=\oplus _{\s \in {\rm {Spin}}^c (Y)}\HFm (Y, \s ).
\]
If $Y$ is a rational homology sphere (i.e., $b_1(Y)=0$) then $\HFm
(Y, \s )$ admits a natural $\Q$-grading, and the graded $\Field [U]$-module
$\HFm (Y, \s)$ is a diffeomorphism invariant of the spin$^c$
three-manifold $(Y, \s)$, while the local equivalence class of $(\CFm
(Y, \s), \partial )$ (for the definition of its notion see
Definition~\ref{def:LocalEquivalence}) provides an invariant of the
rational spin$^c$ homology cobordism class of $(Y, \s )$.  In this
case the local equivalence class of $\CFm (Y, \s)$ can be
characterised by a single rational number $d(Y, \s )$, the
\emph{Ozsv\'ath-Szab\'o correction term} of the spin$^c$
three-manifold $(Y, \s )$.

More recently, exploiting a symmetry built in the theory, Hendricks
and Manolescu introduced involutive Heegaard Floer homology~\cite{HM}.
The main idea of their construction was that the chain complex $\CFm
(Y)$ admits a map $\iota \colon \CFm (Y)\to \CFm (Y)$ which is (up to
homotopy) an involution, and the mapping cone of the map $\iota +\id $
provided ${\rm {HFI}}(Y)$, a module over the ring $\Field
[U,Q]/(Q^2)$. This group is interesting only for those spin$^c$
structures which originate from a spin structure, and provides a new
and rather sensitive diffeomorphism invariant of the underlying spin
three-manifold.  A further application of the above involution
appeared in the work of Hendricks, Hom and Lidman \cite{HHL}, where
connected Heegaard Floer homology $\HFm_{\rm{conn}}(Y)$, a submodule of
$\HFm (Y)$ was defined. This submodule turned out to be a homology
cobordism invariant.

Similar constructions apply for any chain complex equipped
with a (homotopy) involution.  In this paper we will define the
\emph{branched knot Floer homology} of $K$ as $\HFB (K)=H_*(\Cone
(\inv _{\#}+\id ))$, where $\inv \colon \Sigma (K)\to \Sigma (K)$ is
the covering involution, $\inv _{\#}$ is the map induced by $\inv $ on
the Heegaard Floer chain complex $\CFm (\Sigma (K),\s _0)$, 
with $\s _0$ the
unique spin$^c$ structure on $\Sigma (K)$, and $\Cone$ is the mapping
cone of the map $\inv _{\#}+\id $.  (Related constructions have been
examined in \cite{HLS}.)

\begin{thm}
\label{thm:BranchedKnotHom}
The group $\HFB (K)$, as a graded $\Field [U,Q]/(Q^2)$-module, is an
isotopy invariant of the knot $K\subset S^3$.
\end{thm}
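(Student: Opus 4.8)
\emph{Plan.} The argument follows the template of involutive Heegaard Floer homology \cite{HM}: invariance of $\HFB(K)$ is reduced to two inputs, namely (i) that an isotopy of the knot induces an orientation-preserving diffeomorphism of the double branched covers intertwining the covering involutions, and (ii) that the Heegaard Floer chain complex together with a diffeomorphism-induced map is well defined up to equivariant chain homotopy equivalence. One then checks that forming the mapping cone of $\inv_{\#}+\id$ is functorial enough to promote such an equivalence to an isomorphism of graded $\F[U,Q]/(Q^{2})$-modules.

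First I would carry out step (i). A smooth isotopy of $S^{3}$ carrying $K_{0}$ to $K_{1}$ extends to an ambient isotopy, giving at the final time a diffeomorphism of pairs $(S^{3},K_{0})\to(S^{3},K_{1})$. Since the double branched cover is constructed functorially from the index-two subgroup of the fundamental group of the knot complement, this diffeomorphism lifts to an orientation-preserving diffeomorphism $F\colon\Sigma(K_{0})\to\Sigma(K_{1})$, unique up to composition with the covering involution, and with $F\circ\inv_{0}=\inv_{1}\circ F$ on the nose. Because $\Sigma(K)$ is a rational homology sphere with a unique spin$^{c}$ structure $\s_{0}$, both $F$ and $\inv$ automatically preserve $\s_{0}$, so the whole discussion lives on the summand $\CFm(\Sigma(K),\s_{0})$. (I would also note that $\inv$ is an honest involution, with fixed point set the lifted knot $\widetilde{K}\subset\Sigma(K)$, so in particular $\inv_{\#}^{2}\simeq\id$; this is not needed here, where one uses only that $\inv_{\#}$ and $F_{\#}$ are chain homotopy equivalences.)

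Step (ii) is the heart of the matter. By the naturality package for Heegaard Floer homology (Juh\'asz--Thurston and Zemke), the complex $\CFm(\Sigma(K),\s_{0})$ is well defined up to canonical chain homotopy equivalence, a diffeomorphism of three-manifolds induces a chain homotopy equivalence that is well defined up to chain homotopy, and these induced maps compose functorially up to homotopy. Applied to $\inv$, this produces the map $\inv_{\#}$, well defined up to chain homotopy; applied to $F$, together with $F\circ\inv_{0}=\inv_{1}\circ F$, it produces a chain homotopy $F_{\#}\circ(\inv_{0})_{\#}\simeq(\inv_{1})_{\#}\circ F_{\#}$. In the framework of Hendricks--Manolescu this says exactly that $(\CFm(\Sigma(K_{0}),\s_{0}),(\inv_{0})_{\#})$ and $(\CFm(\Sigma(K_{1}),\s_{0}),(\inv_{1})_{\#})$ are connected by a chain homotopy equivalence intertwining the (homotopy) involutions up to homotopy. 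The main obstacle is precisely this point: the covering involution is a priori defined only up to isotopy (as $K$ itself is given only up to isotopy), so one must check that it nonetheless determines a genuinely well-defined homotopy class of chain maps; naturality is what delivers this, but the attendant bookkeeping---especially the behaviour of the diffeomorphism-induced maps on the absolute $\Q$-grading, which is where preservation of orientation and of the spin$^{c}$ structure enters---requires care.

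Finally, step (iii) is formal. Given a chain homotopy equivalence $f\colon(C,\tau)\to(C',\tau')$ of $\F[U]$-complexes with $f\tau+\tau'f=\partial H+H\partial$ for some $U$-linear $H$ of degree $+1$, the assignment $(x,Qy)\mapsto(f(x),Q(f(y)+H(x)))$ defines a grading-preserving, $\F[U,Q]/(Q^{2})$-equivariant chain map $\Cone(\tau+\id)\to\Cone(\tau'+\id)$; that it is a chain homotopy equivalence follows either from the five lemma applied to the tautological long exact sequences of the two cones or by exhibiting an explicit homotopy inverse built from a homotopy inverse of $f$. Chaining steps (i)--(iii) together yields a graded $\F[U,Q]/(Q^{2})$-module isomorphism $\HFB(K_{0})\cong\HFB(K_{1})$, which is the assertion of the theorem.
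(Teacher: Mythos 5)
Your proposal is correct and follows essentially the same strategy as the paper: use Juh\'asz--Thurston naturality to produce a chain homotopy equivalence $\psi$ together with a homotopy $\Gamma$ intertwining the two covering involutions up to homotopy, then promote this to a grading-preserving quasi-isomorphism of mapping cones via the explicit formula $x+Qy\mapsto\psi(x)+Q(\psi(y)+\Gamma(x))$, exactly as in your step (iii). One small correction: $\Sigma(K)$ has $\det(K)>1$ spin$^c$ structures in general, so the reason $F$ and $\inv$ preserve $\s_0$ is that $\s_0$ is the unique \emph{spin} structure (equivalently, the unique self-conjugate spin$^c$ structure), not the unique spin$^c$ structure.
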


A simple argument shows that, as an $\Field [U]$-module, $\HFB (K)$ is
isomorphic to $\Field [U]_{(\deltaover (K))} \oplus\Field
[U]_{(\deltaunder (K))}\oplus A$, where $\deltaover (K), \deltaunder
(K)\in \Q$ and $A$ is a finitely generated, graded $U$-torsion module
over $\Field [U]$.

\begin{thm}
\label{thm:TauBars} 
The rational numbers $\deltaover (K)$ and $\deltaunder(K)$ are knot
concordance invariants.
\end{thm}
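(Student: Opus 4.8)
The plan is to promote a smooth concordance to an equivariant rational homology cobordism between the branched double covers, to extract from it a local equivalence of the associated $\inv$--complexes in the sense of Definition~\ref{def:LocalEquivalence}, and then to argue that $\deltaover$ and $\deltaunder$ are determined by this local equivalence class, following the template of Hendricks--Manolescu~\cite{HM} and Hendricks--Hom--Lidman~\cite{HHL}.

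First I would record the geometric input. Suppose $K_0$ and $K_1$ are smoothly concordant and fix a smoothly and properly embedded annulus $C\subset S^3\times[0,1]$ with $\partial C=K_0\sqcup K_1$. Let $W:=\Sigma(C)$ denote the double branched cover of $S^3\times[0,1]$ along $C$; it is a smooth cobordism from $\Sigma(K_0)$ to $\Sigma(K_1)$. A standard transfer argument shows that $W$ is a rational homology cobordism whose integral homology has no $2$--torsion; in particular $b_1(W)=b_2^{\pm}(W)=0$ and $H^2(W;\Z)$ is finite of odd order, so $W$ carries a \emph{unique} self--conjugate spin$^c$ structure $\s_W$, it restricts to $\s_0$ on each end, and $c_1(\s_W)^2=\chi(W)=\sigma(W)=0$. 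Finally, the covering involution of the pair $(S^3\times[0,1],C)$ restricts to an orientation--preserving self--diffeomorphism $\inv_W\colon W\to W$ that equals the covering involution $\inv$ on each boundary component; since $\inv_W$ is orientation preserving, $\inv_W^*\s_W$ is again a self--conjugate spin$^c$ structure, whence $\inv_W^*\s_W=\s_W$ by uniqueness.

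Next I would translate this into Floer theory. The spin$^c$ cobordism $(W,\s_W)$ induces a chain map $F_W\colon\CFm(\Sigma(K_0),\s_0)\to\CFm(\Sigma(K_1),\s_0)$, well defined up to chain homotopy. Its degree shift $\tfrac14\bigl(c_1(\s_W)^2-2\chi(W)-3\sigma(W)\bigr)$ is zero, so $F_W$ preserves the $\Q$--grading, and because $W$ is a rational homology cobordism, $F_W$ induces an isomorphism on $\HFinf$. By functoriality of the Heegaard Floer cobordism maps with respect to diffeomorphisms of the cobordism, applied to $\inv_W$, together with $\inv_W^*\s_W=\s_W$, there is a chain homotopy $H$ with $F_W\circ\inv_\#\simeq\inv_\#\circ F_W$ (the left $\inv_\#$ acting on $\CFm(\Sigma(K_0),\s_0)$, the right on $\CFm(\Sigma(K_1),\s_0)$). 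The pair $(F_W,H)$ then assembles into a chain map of mapping cones
\[
\widehat F_W\colon \CFB(K_0)\longrightarrow\CFB(K_1),
\]
that is, a grading-preserving $\Field[U,Q]/(Q^2)$--module map which remains an isomorphism after inverting $U$. Running the identical construction for the reversed concordance $\bar C$ and $\bar W=\Sigma(\bar C)$ yields $\widehat F_{\bar W}\colon\CFB(K_1)\to\CFB(K_0)$ with the same properties, so the $\inv$--complexes $\bigl(\CFm(\Sigma(K_0),\s_0),\inv_\#\bigr)$ and $\bigl(\CFm(\Sigma(K_1),\s_0),\inv_\#\bigr)$ are locally equivalent.

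It remains to observe that $\deltaover(K)$ and $\deltaunder(K)$ are invariants of the local equivalence class of the $\inv$--complex: by definition they are the gradings of the two free $\Field[U]$--summands of $\HFB(K)$, and a local map of $\inv$--complexes descends to a grading-preserving map of the corresponding mapping cones that is an isomorphism on the $U^{-1}$--localization, which forces each of $\deltaover$ and $\deltaunder$ to move in a fixed direction under the map. Having such maps in both directions then gives $\deltaover(K_0)=\deltaover(K_1)$ and $\deltaunder(K_0)=\deltaunder(K_1)$, exactly as in~\cite{HM,HHL}. The part of the argument that demands the most care is the equivariance $F_W\circ\inv_\#\simeq\inv_\#\circ F_W$ --- this is where the use of the \emph{covering} involution, rather than Hendricks--Manolescu's internal involution $\iota$, makes a genuine difference, since it requires naturality of the cobordism maps under honest diffeomorphisms of the $4$--dimensional cobordism --- together with the verification that the resulting map $\widehat F_W$ is, up to the equivalence used to define $\deltaover$ and $\deltaunder$, independent of the Heegaard data, of the homotopy $H$, and of the basepoint paths; this is the same homotopy--coherence bookkeeping that underlies Theorem~\ref{thm:BranchedKnotHom}.
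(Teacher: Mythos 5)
Your proof is correct and follows essentially the same route as the paper: the double branched cover of the concordance is a rational homology cobordism carrying a unique self-conjugate spin$^c$ structure, the covering involution extends over it, Lemma~\ref{lem:naturalitylemma} makes the induced cobordism map homotopy-commute with $\inv_\#$, and the resulting local equivalences of $\iota$-complexes pin down $\deltaover$ and $\deltaunder$ as in \cite{HM}. The only cosmetic difference is that the paper routes the last algebraic step through the image of a maximal self-local equivalence via Lemma~\ref{lem:algebra} (reusing the setup of Theorem~\ref{thm:ConnKnot}), whereas you argue directly with the induced maps on mapping cones; both are standard and interchangeable.
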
 

Adapting the method of \cite{HHL} for defining new homology cobordism
invariants of rational homology spheres, we define the \emph{connected
  branched Floer homology} $\HFB_{\rm {conn}}(K)$ of a knot $K\subset
S^3$ as follows.  Consider a sel-local equivalence $f_{\rm{max}}\colon
\CFm (\Sigma (K), \s_0)\to \CFm (\Sigma (K), \s _0)$ which commutes
(up to homotopy) with $\inv _{\#}$ and has maximal kernel among such
endomorphisms.  Then take $\HFB_{\rm{conn}}(K)=H_* (\Imm \,
f_{\rm{max}})$.

\begin{thm}
\label{thm:ConnKnot}
The module $\HFB_{\rm{conn}}(K)$ (up to isomorphism) is independent of
the choice of the map $f_{\rm {max}}$ with maximal kernel, and the
isomorphism class of the graded module $\HFB_{\rm{conn}} (K)$ is a
concordance invariant of the knot $K$.
\end{thm}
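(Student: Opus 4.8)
The plan is to separate the algebra — the behaviour of maximal self-local equivalences of a finitely generated complex carrying a homotopy involution — from the geometry, and in both places to adapt the corresponding arguments of \cite{HHL}. Throughout, a \emph{self-local equivalence} of a pair $(C,\iota)$ (with $C$ a finitely generated, $\Q$-graded, free $\Field[U]$-complex of the homotopy type of $\CFm$ of a rational homology sphere and $\iota\colon C\to C$ a grading-preserving $U$-equivariant chain map with $\iota^{2}\simeq\id$) means a grading-preserving $U$-equivariant chain map $f\colon C\to C$ inducing an isomorphism on $\HFinf=U^{-1}H_{*}(C)$ and satisfying $f\iota\simeq\iota f$. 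First I would record the elementary facts: the composition of two self-local equivalences is one; since $C$ is finitely generated, $H_*(C)\cong\Field[U]_{(d)}\oplus T$ with $T$ finite dimensional, $\ker H_{*}(f)$ is $U$-torsion hence contained in $T$, so the set of achievable values of $\dim_\Field\ker H_{*}(f)$ is finite and a self-local equivalence $f_{\rm max}$ of maximal kernel exists; and — adapting the key lemma of \cite{HHL} — for such a maximal $f$ the subcomplex $\Imm f\subseteq C$ has $H_{*}(\Imm f)$ a summand of $H_*(C)$ isomorphic to $\Imm H_{*}(f)$, and the restriction $f|_{\Imm f}\colon\Imm f\to\Imm f$ is a chain homotopy equivalence. (The last point: $f^{2}$ is again a self-local equivalence with $\ker H_{*}(f^{2})\supseteq\ker H_{*}(f)$, hence equal by maximality; this forces $H_{*}(f)$ to restrict to an automorphism of $\Imm H_{*}(f)$, and a self-quasi-isomorphism of a bounded-below free $\Field[U]$-complex is a homotopy equivalence.)

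Next I would prove the following invariance statement, of which the independence of the choice of $f_{\rm max}$ is the special case $\phi=\psi=\id$: if there exist grading-preserving $U$-equivariant chain maps $\phi\colon C_{0}\to C_{1}$, $\psi\colon C_{1}\to C_{0}$, each an isomorphism on $\HFinf$, with $\phi\iota_{0}\simeq\iota_{1}\phi$ and $\psi\iota_{1}\simeq\iota_{0}\psi$ (no relation between $\psi\phi$ and $\id$ is required), then $H_{*}(\Imm f_{0})\cong H_{*}(\Imm f_{1})$ for maximal self-local equivalences $f_{i}$ of $(C_{i},\iota_{i})$. The argument is a ``round trip'': $g_{0}:=f_{0}\,\psi\,f_{1}\,\phi\,f_{0}\colon C_{0}\to C_{0}$ is a self-local equivalence of $(C_{0},\iota_{0})$ whose kernel on homology contains $\ker H_{*}(f_{0})$, hence equals it by maximality, so by the rigidity lemma $g_{0}|_{\Imm f_{0}}$ is a chain homotopy equivalence of $\Imm f_{0}$. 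Writing $\alpha:=(f_{1}\phi)|_{\Imm f_{0}}\colon\Imm f_{0}\to\Imm f_{1}$ and $\beta:=(f_{0}\psi)|_{\Imm f_{1}}\colon\Imm f_{1}\to\Imm f_{0}$, one has $g_{0}|_{\Imm f_{0}}=\beta\circ\alpha\circ(f_{0}|_{\Imm f_{0}})$; since the two outer factors are homotopy equivalences, $\alpha$ has a homotopy left inverse. The symmetric round trip $g_{1}:=f_{1}\,\phi\,f_{0}\,\psi\,f_{1}$ on $(C_{1},\iota_{1})$ gives $g_{1}|_{\Imm f_{1}}=\alpha\circ\beta\circ(f_{1}|_{\Imm f_{1}})$, so $\alpha$ also has a homotopy right inverse; hence $\alpha$ is a chain homotopy equivalence and $H_{*}(\Imm f_{0})\cong H_{*}(\Imm f_{1})$ as graded $\Field[U]$-modules. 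Applied with $(C_{0},\iota_{0})=(C_{1},\iota_{1})=(\CFm(\Sigma(K),\s_{0}),\inv_{\#})$ and $\phi=\psi=\id$, this shows $\HFB_{\rm{conn}}(K)$ does not depend on the chosen $f_{\rm max}$.

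For concordance invariance, let $K_{0}$ and $K_{1}$ be smoothly concordant, fix a smooth concordance annulus $A\subset S^{3}\times[0,1]$, and let $W:=\Sigma(S^{3}\times[0,1],A)$ be its double branched cover. Then $W$ is a smooth cobordism from $\Sigma(K_{0})$ to $\Sigma(K_{1})$ which, by the standard homology computation for branched covers, is a rational homology $S^{3}\times[0,1]$ (so $b_{1}(W)=b_{2}(W)=0$) and which carries the deck transformation $\inv_{W}$, restricting to the covering involutions $\inv_{i}$ on its two ends and fixing $\s_{0}$. The Heegaard Floer cobordism maps $F_{W}\colon\CFm(\Sigma(K_{0}),\s_{0})\to\CFm(\Sigma(K_{1}),\s_{0})$ and $F_{\bar W}$ of the turned-around cobordism are grading-preserving (the degree shift $\tfrac14(c_{1}^{2}-2\chi-3\sigma)$ vanishes) and, because $W$ is a rational homology cobordism, induce isomorphisms on $\HFinf$; and — the crucial point — naturality of the cobordism maps under the self-diffeomorphism $\inv_{W}$ gives $F_{W}\circ(\inv_{0})_{\#}\simeq(\inv_{1})_{\#}\circ F_{W}$, and likewise for $\bar W$. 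Thus $\phi:=F_{W}$, $\psi:=F_{\bar W}$ satisfy the hypotheses of the invariance statement above, so $\HFB_{\rm{conn}}(K_{0})\cong\HFB_{\rm{conn}}(K_{1})$.

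The technical heart of the proof is twofold. First, the rigidity lemma and the identification of $H_{*}(\Imm f_{\rm max})$ with a summand of $H_{*}(\CFm(\Sigma(K),\s_{0}))$ must be set up so that $\Imm f_{\rm max}$ behaves well as a complex with its induced homotopy involution; I would import this essentially verbatim from \cite{HHL} after checking that $\CFm(\Sigma(K),\s_{0})$ meets their standing hypotheses. Second, and more seriously, one must establish the equivariance $F_{W}\circ(\inv_{i})_{\#}\simeq(\inv_{j})_{\#}\circ F_{W}$: this requires choosing a handle decomposition of $W$ and compatible Heegaard triples equivariantly with respect to $\inv_{W}$ and tracking the effect of $\inv_{W}$ through the triangle maps, in the same spirit as the control of the covering involution used in the proof of Theorem~\ref{thm:BranchedKnotHom} and as Hendricks--Manolescu's analysis of $\iota$ under cobordisms. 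I expect this equivariant functoriality to be the main obstacle; the remainder is bookkeeping on top of \cite{HHL}.
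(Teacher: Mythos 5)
Your proposal is correct and follows essentially the same route as the paper: take the double branched cover $\Sigma(C)$ of the concordance annulus $C$, observe it is a rational homology cobordism, show the induced cobordism maps homotopy-commute with the covering involutions, and then invoke the Hendricks--Hom--Lidman algebra of maximal self-local equivalences. The one place where your plan misjudges the difficulty is the equivariant functoriality $F_W\circ(\inv_0)_\#\simeq(\inv_1)_\#\circ F_W$ that you flag as the ``main obstacle'' to be established by equivariant handle decompositions: the paper simply cites this as an already-available naturality statement (Lemma~\ref{lem:naturalitylemma}, attributed to Ozsv\'ath--Szab\'o and Zemke), which states that if a self-diffeomorphism $T$ of the cobordism $W$ restricts to $\tau$ and $\tau'$ on the ends then $\tau'_\#\circ F_{W,\mathfrak{t}}^-=F_{W,T_*\mathfrak{t}}^-\circ\tau_\#$. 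What still needs verifying is the spin$^c$ bookkeeping: the paper notes (via Grigsby's lemma) that $\Sigma(C)$ carries a distinguished spin structure $\mathfrak{t}$ restricting to $\s_0$ on both ends and satisfying $T_*\mathfrak{t}=\mathfrak{t}$, which is exactly what makes the naturality statement give commutativity on the nose rather than up to conjugating the spin$^c$ structure. Your ``round-trip'' derivation of the independence of $f_{\rm max}$ and of the local-equivalence invariance of $H_*(\Imm f_{\rm max})$ is a perfectly good self-contained re-derivation of parts (2) and (3) of what the paper quotes wholesale as Lemma~\ref{lem:algebra} from \cite{HHL}; the paper gains brevity by citing, your version gains transparency, but the mathematical content is the same.
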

It follows from the construction that $\HFB_{\rm{conn}}(K)$ is an
$\Field [U]$-submodule of $\HFm (\Sigma (K), \s _0)$.  As a finitely
generated $\Field [U]$-module, $\HFB_{\rm{conn}}(K)$ is the sum of
cyclic modules, and since it is of rank one, it can be written as
\[
\HFB_{\rm{conn}}(K)=\Field [U]\oplus \HFB_{\rm{red\kotojel conn}}(K),
\]
where the second summand (the $U$-torsion submodule) is the
\emph{reduced} connected homology of $K$.

It is not hard to see that if $\HFm (\Sigma (K), \s _0)=\Field [U]$ 
holds  --- for example if $\Sigma (K)$ is an $L$-space, which is
the case if $K$ is a quasi-alternating knot ---, then $\inv _{\#}$
is chain homotopic to $\id$ implying

\begin{thm}
\label{thm:AltAndQuasiAlt}
If $K$ is concordant to an alternating (or more generally to a
quasi-alternating) knot, then $\HFB_{\rm{red\kotojel conn}}(K)=0$.
\end{thm}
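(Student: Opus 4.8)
The plan is to first reduce to the quasi-alternating case using concordance invariance, and then to exploit the fact that for such knots the group $\HFm(\Sigma(K),\s_0)$ is as small as it can be. For the reduction: by Theorem~\ref{thm:ConnKnot} the graded $\Field[U]$-module $\HFB_{\rm{conn}}(K)$ is a concordance invariant, and since in the splitting $\HFB_{\rm{conn}}(K)=\Field[U]\oplus\HFB_{\rm{red\kotojel conn}}(K)$ the free summand is exactly the $U$-nontorsion part, the reduced connected homology $\HFB_{\rm{red\kotojel conn}}(K)$ is itself a graded concordance invariant. As every alternating knot is quasi-alternating, it therefore suffices to prove $\HFB_{\rm{red\kotojel conn}}(K)=0$ when $K$ is quasi-alternating.

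Next, for quasi-alternating $K$ the double branched cover $\Sigma(K)$ is an $L$-space (Ozsv\'ath--Szab\'o), so $\HFm(\Sigma(K),\s_0)\cong\Field[U]$ and $\CFm(\Sigma(K),\s_0)$ is chain homotopy equivalent to the complex $\Field[U]$ with vanishing differential, generated in degree $d(\Sigma(K),\s_0)$. Because the covering involution $\inv$ is orientation preserving and necessarily fixes the unique spin$^c$ structure $\s_0$, the induced map $\inv_\#$ is a $U$-equivariant, grading preserving self-homotopy-equivalence of this complex. On $H_*(\CFm(\Sigma(K),\s_0))\cong\Field[U]$ the only $U$-equivariant grading preserving automorphism is the identity; transporting to the standard model $\Field[U]$, a degree-$0$ $U$-equivariant chain endomorphism of $\Field[U]$ is multiplication by a scalar in $\Field$, hence equals $\id$ once it is the identity on homology. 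Pulling back the chain homotopy equivalence gives $\inv_\#\simeq\id$, which is the assertion in the remark preceding the statement.

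With this in hand the conclusion follows in two (equivalent) ways. Directly: $\HFB_{\rm{conn}}(K)$ is, by its construction, a rank-one $\Field[U]$-submodule of $\HFm(\Sigma(K),\s_0)\cong\Field[U]$, and every nonzero graded submodule of $\Field[U]$ is free of rank one (as $\Field[U]$ is a graded principal ideal domain), hence carries no $U$-torsion; thus $\HFB_{\rm{red\kotojel conn}}(K)=0$. Alternatively, computing the connected homology on the chain homotopy model $\Field[U]$ (which is legitimate since $\HFB_{\rm{conn}}$ depends only on the chain homotopy type): since $\inv_\#\simeq\id$, the homotopy-commutation constraint defining $f_{\rm{max}}$ is vacuous, the only self-local equivalence of $\Field[U]$ is $\id$, so $f_{\rm{max}}=\id$ and $\HFB_{\rm{conn}}(K)=H_*(\Imm\,\id)=\Field[U]$.

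The step that deserves the most care, and which I would expand in a full write-up, is the implication $\HFm(\Sigma(K),\s_0)\cong\Field[U]\Rightarrow\inv_\#\simeq\id$: it uses that an $L$-space Floer complex is chain homotopy equivalent to the trivial complex $\Field[U]$ (so that ``identity on homology'' genuinely upgrades to ``homotopic to the identity''), together with the naturality of $\inv_\#$ up to chain homotopy and the fact that $\inv$ fixes $\s_0$. Everything else is a quotation or an elementary computation: concordance invariance is Theorem~\ref{thm:ConnKnot}, the $L$-space property of $\Sigma(K)$ for quasi-alternating $K$ is due to Ozsv\'ath--Szab\'o, and the final submodule argument is elementary commutative algebra.
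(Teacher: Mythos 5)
Your proof is correct and follows essentially the same path as the paper: reduce to the quasi-alternating case via concordance invariance (Theorem~\ref{thm:ConnKnot}), invoke the Ozsv\'ath--Szab\'o $L$-space property of $\Sigma(K)$ so that $\HFm(\Sigma(K),\s_0)\cong\Field[U]$, upgrade ``identity on homology'' to $\inv_\#\simeq\id$ using that the Floer complex of an $L$-space is chain homotopy equivalent to $\Field[U]$ with zero differential (the paper cites Dai--Manolescu for this; your explicit transport argument is the content of that citation), and then conclude that the constraint on self-local equivalences is vacuous so $\HFB_{\rm{conn}}(K)=\Field[U]$. One small improvement worth noting: your ``direct'' derivation at the end, that $\HFB_{\rm{conn}}(K)$ is a graded rank-one submodule of $\HFm(\Sigma(K),\s_0)\cong\Field[U]$ and hence free (so torsion-free), is actually strictly shorter than the paper's route and does not require identifying $\inv_\#$ up to homotopy at all; it only uses $\HFm\cong\Field[U]$ and the general fact from Hendricks--Hom--Lidman that $\HFB_{\rm{conn}}$ injects into $\HFm$. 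So the two derivations you call ``equivalent'' are not quite: the first is logically independent of the step $\inv_\#\simeq\id$. A minor slip: $\s_0$ is the unique \emph{spin} structure on $\Sigma(K)$, not the unique spin$^c$ structure (for $\det K>1$ there are several spin$^c$ structures), but this does not affect the argument since the relevant fact is that $\inv$ fixes $\s_0$, which the paper's earlier lemma establishes.
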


Somewhat more surprisingly, the same vanishing holds for torus knots
(a phenomenon reminiscent to the behaviour of the extension of the
Upsilon-invariant of \cite{OSS4} to the Khovanov setting given by
Lewark-Lobb in \cite{LewarkLobb}):

\begin{thm}
\label{thm:TorusVanishing}
For the torus knot $T_{p,q}$ we have that $\HFB_{\rm{red\kotojel conn}}(T_{p,q})=0$.
\end{thm}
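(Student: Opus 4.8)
The plan is to exploit the very simple structure of the Heegaard Floer chain complex of $\Sigma(T_{p,q})$. Recall that $\Sigma(T_{p,q})$ is the Brieskorn manifold $\Sigma(2,p,q)=\{(x,y,z)\in\C^3:x^2+y^p+z^q=0\}\cap S^5$, a Seifert fibered space over $S^2(2,p,q)$, and that under this identification the covering involution $\inv$ is $(x,y,z)\mapsto(-x,y,z)$. The structural input I would use is that $\CFm(\Sigma(2,p,q),\s_0)$, as an $\Field[U]$-chain complex, is homotopy equivalent to $\Field[U]_{(d)}\oplus\bigoplus_i B_i$, where $d=d(\Sigma(2,p,q),\s_0)$, $\s_0$ is the self-conjugate spin$^c$ structure, and each $B_i$ is a \emph{box} --- a two-generator complex $\Field[U]\xrightarrow{U^{n_i}}\Field[U]$ --- whose homology is supported in gradings strictly below $d$. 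This is the combination of the Ozsv\'ath--Szab\'o computation of the Heegaard Floer homology of Seifert fibered rational homology spheres, N\'emethi's graded root / lattice homology description of the Floer complex of almost rational plumbed manifolds, and the fact that $\Sigma(2,p,q)$ bounds a sharp negative definite plumbing, which forces $\HFm_{\rm{red}}$ to lie below the bottom of the tower. I would also record that when $p$ and $q$ are both odd, so that $\Sigma(2,p,q)$ is an integral homology sphere, $\inv$ is isotopic to the identity: it equals the action of $-1\in S^1$ for the Seifert circle action $\lambda\cdot(x,y,z)=(\lambda^{pq}x,\lambda^{2q}y,\lambda^{2p}z)$ (note $(-1)^{pq}=-1$ since $pq$ is odd), and $-1$ is joined to $1$ in $S^1$; hence $\inv_{\#}\simeq\id$ in that case.

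The heart is then a purely algebraic statement: if $C\simeq\Field[U]_{(d)}\oplus\bigoplus_i B_i$ with the $B_i$ boxes whose homology lies below grading $d$, and $\phi$ is any grading-preserving $\Field[U]$-linear self-chain-homotopy-equivalence of $C$, then the pair $(C,\phi)$ is locally equivalent (as a complex carrying an involution up to homotopy, in the sense entering the definition of $\HFB_{\rm{conn}}$) to $(\Field[U]_{(d)},\id)$. Indeed the projection $\pi\colon C\to\Field[U]_{(d)}$ onto the tower summand and the inclusion $\iota$ of the tower are local equivalences, and the only thing to check is that they intertwine $\phi$ with $\id$ up to homotopy, i.e. $\pi\circ\phi\simeq\pi$ and $\phi\circ\iota\simeq\iota$. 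Writing $\phi$ in block form, its tower-to-tower block $\pi\circ\phi\circ\iota$ is a grading-preserving $\Field[U]$-linear local self-map of $\Field[U]_{(d)}$, hence equal to $\id$, so both reductions come down to showing that every chain map from a box $B$ (with homology below grading $d$) to $\Field[U]_{(d)}$ is null-homotopic and that every chain map from $\Field[U]_{(d)}$ to such a box is null-homotopic. Both follow from elementary grading bookkeeping: the two generators of a box differ in grading by an odd integer, and the hypothesis on the support of the homology of $B$ forces the relevant components either to vanish on the nose or to be undone by an explicit one-term homotopy.

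Combining the two ingredients, $(\CFm(\Sigma(T_{p,q}),\s_0),\inv_{\#})$ is locally equivalent to $(\Field[U]_{(d)},\id)$; composing the two local equivalences yields a self-local equivalence of $\CFm(\Sigma(T_{p,q}),\s_0)$ commuting with $\inv_{\#}$ up to homotopy whose image has homology a single tower, which is the minimum possible. By the characterisation of (branched) connected homology as the homology of the image of such a maximal-kernel self-local equivalence, $\HFB_{\rm{conn}}(T_{p,q})\cong\Field[U]_{(d)}$, that is, $\HFB_{\rm{red\kotojel conn}}(T_{p,q})=0$. In the integral homology sphere case one can shortcut the block-form computation entirely: since $\inv_{\#}\simeq\id$ the equivariant statement reduces to the non-equivariant one, which is immediate once $\CFm$ is known to be a tower plus boxes.

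The main obstacle is the structural input of the first paragraph --- that $\CFm(\Sigma(2,p,q),\s_0)$ is a tower plus boxes with the boxes strictly below the tower --- established uniformly in $p$ and $q$. This is cleanest when $\Sigma(2,p,q)$ is an integral homology sphere; the case in which one of $p,q$ is even (so $\Sigma(2,p,q)$ is only a rational homology sphere, $\inv$ need not be isotopic to the identity, and one must pin down the correct self-conjugate spin$^c$ structure and the orientation induced by the branched covering) requires the rational homology sphere refinement of lattice homology for almost rational plumbed manifolds together with the resulting control on the grading support of $\HFm_{\rm{red}}$. Everything downstream of that input is formal.
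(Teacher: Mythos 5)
Your $pq$-odd argument is exactly the paper's Proposition~4.3 (the Seifert $S^1$-action joins $-1$ to $1$, so $\inv$ is isotopic to $\id$ and $\inv_\#\simeq\id$). The trouble is in the other half, and in the structural input you acknowledge as ``the main obstacle'': the assertion that $\CFm(\Sigma(2,p,q),\s_0)$ is homotopy equivalent to $\Field[U]_{(d)}\oplus\bigoplus_iB_i$ with the homology of every box lying \emph{strictly} below grading $d$ is false. Already $\Sigma(2,3,7)=\Sigma(T_{3,7})$ has $\HFm=\Field[U]_{(-2)}\oplus\Field_{(-2)}$, so the unique box contributes torsion at grading $-2=d$, not below it. (In the graded-root picture this corresponds to two leaves sitting at the same, top, weight; there is no reason this cannot happen when one of $p,q$ is even as well, and you offer no argument ruling it out.) Sharpness of the plumbing controls $d$ via a maximum over characteristic vectors; it does not push $\HFm_{\mathrm{red}}$ below the top of the tower.

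This matters because your algebraic reduction genuinely uses the strict inequality in one of its two directions. The direction $\pi\circ\phi\simeq\pi$ is fine with only a parity constraint: writing $\phi$ in block form, the block $\phi_{TB}\colon\bigoplus B_i\to\Field[U]_{(d)}$ vanishes on each $b_i$ (chain-map condition, no $U$-torsion in the target) and on each $a_i$ (wrong parity), so $\phi_{TB}=0$ identically. But the other direction needs $\phi\circ\iota\simeq\iota$, i.e.\ $\phi_{BT}\colon\Field[U]_{(d)}\to\bigoplus B_i$ null-homotopic. Here $\phi_{BT}(1)$ is a degree-$d$ cycle, hence lies in $\bigoplus\Field[U]b_i$. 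If some $\gr(b_i)=d$, then $\phi_{BT}(1)$ may have a nonzero $b_i$-component, and since $B_i$ has no element in grading $d+1$ (because $n_i\ge1$), no homotopy can kill it. So in the presence of a box at level $d$ your lemma fails, and you cannot conclude that $(\CFm,\inv_\#)$ is locally equivalent to $(\Field[U]_{(d)},\id)$ without knowing something specific about $\inv_\#$ — which is exactly the information you were trying to avoid computing.

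For contrast, the paper does not attempt a universal ``tower-plus-boxes'' lemma. It passes to the symmetric graded root, proves there is a $\Z/2\Z$-invariant vertex of minimal $\chi_k$-value (an explicit optimization over lattice points exploiting the leg-swapping form of $\inv$ on the plumbing), and then runs an induction that pairs off non-invariant leaves, building a local equivalence to a graded root whose $\Z/2\Z$-action is trivial. That argument uses concrete features of the covering involution as an automorphism of $\Gamma_{p,q}$; your argument would need that input anyway to handle the boxes at level $d$, so the shortcut does not actually save work. To repair your route you would either have to (i) prove that for $\Sigma(2,p,q)$ with $pq$ even all torsion in $\HFm(\cdot,\s_0)$ sits strictly below $d$ (unproven, and I do not see why it should hold), or (ii) show directly that $\inv_\#$ fixes the homology class of the tower generator, which is essentially what the paper's invariant-minimum argument achieves.
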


Lattice homology of N\'emethi~\cite{Nem3} (through results and
computations of Dai-Manolescu~\cite{DM} and
Hom-Karakurt-Lidman~\cite{HKL}) provides a computational scheme of the
above invariants for Montesinos (and in particular, for pretzel)
knots. The combination of some such sample calculations with the two
vanishing results above allow us to show that certain families of
pretzel knots are linearly independent from alternating and torus
knots in the smooth concordance group.  To state the results, let us introduce
the following notations: let ${\mathcal{C}}$ denote the (smooth)
concordance group of knots in $S^3$ and ${\mathcal {QA}}$
(respectively ${\mathcal {T}}$) those subgroups of ${\mathcal{C}}$
which are generated by all quasi-alternating (respectively torus)
knots. In addition, ${\mathcal {QA}}+{\mathcal {T}}$ is the subgroup
generated by alternating knots and torus knots.
The following theorem extends results from \cite{alfieri1, Wang}.

\begin{thm}
\label{thm:pretzels}
Let $K$ be the connected sum of pretzel knots of the form $P(-2,3,q)$,
with $q\geq 7$ odd. Then $K$ is not concordant to any linear
combination of alternating or torus knots, i.e.  $[K]$ is a nonzero
element of the quotient ${\mathcal {C}}/({\mathcal {T}}+{\mathcal
  {QA}})$.
\end{thm}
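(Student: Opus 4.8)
The plan is to recast the whole argument in terms of the local equivalence class of the pair $(\CFm(\Sigma(K),\s_0),\inv_{\#})$ and to feed it the two vanishing results above together with an explicit lattice-homological computation for $P(-2,3,q)$. First I would establish the connected sum formula
\[
(\CFm(\Sigma(K_1\#K_2),\s_0),\inv_{\#})\simeq(\CFm(\Sigma(K_1),\s_0),\inv_{\#})\otimes_{\Field[U]}(\CFm(\Sigma(K_2),\s_0),\inv_{\#}),
\]
which follows from the equivariant decomposition $\Sigma(K_1\#K_2)\cong\Sigma(K_1)\#\Sigma(K_2)$ of branched covers and the K\"unneth formula for $\HFm$. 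Writing $\mathfrak{I}$ for the group of local equivalence classes of $\inv_{\#}$-complexes (defined exactly as in the involutive theory, with $\inv_{\#}$ in the role of the conjugation $\iota$), this makes $\Phi\colon K\mapsto[(\CFm(\Sigma(K),\s_0),\inv_{\#})]$ a concordance invariant and a group homomorphism $\mathcal{C}\to\mathfrak{I}$ with $\Phi(-K)=-\Phi(K)$. I would also record the implication $\HFB_{\rm{red\kotojel conn}}(K)=0\Rightarrow\Phi(K)=0$: as in the discussion following Theorem~\ref{thm:ConnKnot}, the image of a self-local equivalence with maximal kernel is locally equivalent to the whole complex, so if it carries no $U$-torsion it must represent the trivial class $[(\Field[U],\id)]$.

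By Theorem~\ref{thm:AltAndQuasiAlt} --- for a quasi-alternating $K$ the manifold $\Sigma(K)$ is an $L$-space, hence $\inv_{\#}\simeq\id$ and the complex is locally trivial --- and by Theorem~\ref{thm:TorusVanishing} together with the implication just noted, $\Phi$ vanishes on every generator of $\mathcal{T}+\mathcal{QA}$; being a homomorphism, it descends to a map $\mathcal{C}/(\mathcal{T}+\mathcal{QA})\to\mathfrak{I}$. Hence the theorem follows once we show
\[
\Phi(P(-2,3,q_1))+\dots+\Phi(P(-2,3,q_n))\neq 0 \quad\text{in } \mathfrak{I}
\]
for every choice of odd integers $q_1,\dots,q_n\geq 7$.

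To compute $\Phi(P(-2,3,q))$ I would use that $\Sigma(P(-2,3,q))$ is a Seifert fibered rational homology sphere, for which N\'emethi's lattice homology~\cite{Nem3} --- through the computations of Dai--Manolescu~\cite{DM} and Hom--Karakurt--Lidman~\cite{HKL} --- supplies an explicit model of $\CFm(\Sigma(P(-2,3,q)),\s_0)$. The key new input is to realize the covering involution on a Heegaard diagram adapted to the plumbing graph, so that the induced map $\inv_{\#}$ becomes an explicit combinatorial symmetry of the lattice complex; note that the cited computations describe the conjugation involution rather than the covering involution, so this identification requires honest work with equivariant diagrams. From the resulting model one reads off the local equivalence class and verifies that $\Phi(P(-2,3,q))\neq 0$, or equivalently that $\HFB_{\rm{red\kotojel conn}}(P(-2,3,q))\neq 0$.

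The remaining --- and hardest --- point is to exclude cancellation in the displayed sum. For this I would construct a homomorphism from (the relevant summand of) $\mathfrak{I}$ to a totally ordered abelian group, built from the graded structure of the image of the maximal self-local equivalence in the spirit of Dai--Manolescu's~\cite{DM} analysis of the involutive local equivalence classes of Brieskorn spheres, and show that it takes a strictly positive value on $\Phi(P(-2,3,q))$ for every odd $q\geq 7$; its value on $\Phi(K)$ is then a sum of strictly positive terms, hence nonzero, which finishes the proof. The main obstacle is precisely this last step together with the identification of $\inv_{\#}$: one needs a faithful combinatorial description of the covering involution on the lattice complex and a monotonicity statement stable under tensor products, so that connected sums of arbitrarily many --- and arbitrarily chosen --- pretzel knots $P(-2,3,q)$ stay locally nontrivial.
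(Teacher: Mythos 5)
Your proposal follows the same overall architecture as the paper's proof — identify the local equivalence class of the $\iota$-complex of $P(-2,3,q)$ via lattice homology, show that connected sums inherit nontriviality, and combine with the two vanishing results — but you build more machinery than the paper actually needs, and the theorem's own proof shortcuts the two steps you single out as the hardest.

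On the identification of $\inv_\#$ with the combinatorial involution on the lattice complex: you are right that this is not automatic, but the paper settles it once and for all in Theorem~\ref{montesinoscomp}. The key inputs are Lemma~\ref{lem:naturalitylemma} together with Saveliev's observation that the covering involution of an arborescent link extends over the negative definite plumbing $X_\Gamma$ as complex conjugation, which acts on $\Spinc(X_\Gamma)$ by conjugation; this identifies $\tau_\#\circ F^-_{W_\Gamma,k}$ with $F^-_{W_\Gamma,-k}$, and then $\inv_\#$ matches the Dai--Manolescu involution $J_\#$ on the model complex $C(R_\Gamma)$ by the uniqueness statement in Lemma~\ref{algebraicfundation}. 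So no new equivariant diagram work is needed; the concern you flag is real but resolved by a citation rather than a computation.

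On the ``exclude cancellation'' step: since $K$ is a genuine connected sum (a \emph{positive} sum) of the knots $P(-2,3,q)$, there is no sign cancellation to worry about, only the possibility that a tensor product of nontrivial $\iota$-complexes could become locally trivial. The paper does not construct a new homomorphism to an ordered group; it instead uses Lemma~\ref{lem:CalcForPretzel} to show that each $(\CFm(\Sigma(2,-3,-q),\s_0),\inv_\#)$ is locally equivalent to a specific shifted copy of the standard complex $C$ (denoted $\mathcal{C}_1$ in \cite{HHL}), and then cites \cite[Proposition~7.1]{HHL}, which directly computes the connected homology of tensor products of such complexes and shows it has nonvanishing $U$-torsion. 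Combined with Proposition~\ref{prop:TotalVanishing}, this finishes the argument in one line. Your proposed ordered-group homomorphism is essentially a repackaging of what that HHL proposition provides; it would work, but it reproves an available result. One more caution: your implication $\HFB_{\rm{red\kotojel conn}}(K)=0\Rightarrow\Phi(K)=0$ does hold (a free $\F[U]$-complex with homology $\F[U]$ carries a unique grading-preserving homotopy involution up to homotopy, so the image of a maximal self-local equivalence is locally trivial), but the paper sidesteps the need for it by working throughout with the connected homology itself rather than with a local-equivalence-group homomorphism $\Phi$, and that is cleaner.
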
 

Furthermore, relying on computations from \cite{DM, HKL}, we prove the following. 

\begin{thm}
\label{thm:PretzelLinIndep}
The pretzel knots $\{ P(-p,2p-1,2p+1)\mid p\ \mbox{odd}\}$ are
linearly independent in the quotient group $\mathcal
{C}/(\mathcal{T}+\mathcal{QA})$. In paricular, $\Z ^{\infty}\subset
\mathcal {C}/(\mathcal{T}+\mathcal{QA})$.
\end{thm}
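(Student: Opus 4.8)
We sketch the strategy.

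The plan is to repackage $\HFB_{\rm{conn}}$ into a homomorphism out of the concordance group and detect linear independence there. The starting point is that $\Sigma(K_1\#K_2)=\Sigma(K_1)\#\Sigma(K_2)$ and that the covering involution of a connected sum is the equivariant connected sum of the covering involutions; on Heegaard Floer chain complexes this yields a homotopy equivalence of pairs $(\CFm(\Sigma(K_1\#K_2),\s_0),\inv_\#)\simeq(\CFm(\Sigma(K_1),\s_0),\inv_\#)\otimes_{\Field[U]}(\CFm(\Sigma(K_2),\s_0),\inv_\#)$. Following Hendricks--Manolescu and \cite{HHL}, together with the local equivalence formalism already underlying Theorems~\ref{thm:TauBars} and \ref{thm:ConnKnot}, let $\mathfrak{I}$ denote the abelian group of such ``$\iota$-complexes'' modulo local equivalence, with addition the tensor product over $\Field[U]$ and inverse induced by duality (equivalently, by orientation reversal of $\Sigma(K)$). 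Then $\Phi(K)=[(\CFm(\Sigma(K),\s_0),\inv_\#)]$ gives a well-defined homomorphism $\Phi\colon\mathcal{C}\to\mathfrak{I}$, through which the invariants $\HFB_{\rm{red\kotojel conn}}(K)$, $\deltaover(K)$ and $\deltaunder(K)$ all factor.

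The second step is to place the two ``trivial'' families inside a small subgroup. If $K$ is quasi-alternating then $\Sigma(K)$ is an $L$-space, so $\CFm(\Sigma(K),\s_0)$ has rank one and $\inv_\#\simeq\id$; hence $\Phi(K)$ lies in the subgroup $\mathfrak{I}_{\mathrm{triv}}\subset\mathfrak{I}$ generated by the grading-shifted trivial complexes $(\Field[U]_{(d)},\id)$. The argument proving Theorem~\ref{thm:TorusVanishing} in fact identifies $(\CFm(\Sigma(T_{p,q}),\s_0),\inv_\#)$ up to local equivalence with one such complex --- which is precisely what forces $\HFB_{\rm{red\kotojel conn}}$ to vanish there --- so $\Phi(\mathcal{T})\subseteq\mathfrak{I}_{\mathrm{triv}}$ as well. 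Consequently $\Phi(\mathcal{T}+\mathcal{QA})\subseteq\mathfrak{I}_{\mathrm{triv}}$, and it suffices to show that the classes $\Phi\big(P(-p,2p-1,2p+1)\big)$, $p$ odd, are linearly independent in $\mathfrak{I}/\mathfrak{I}_{\mathrm{triv}}$.

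The computational core is the evaluation of $\Phi$ on $Y_p:=\Sigma\big(P(-p,2p-1,2p+1)\big)$. This is a Seifert fibered rational homology sphere and, with a suitable orientation, a negative-definite almost-rational plumbed three-manifold, so N\'emethi's lattice homology~\cite{Nem3} computes the local equivalence class of $\CFm(Y_p,\s_0)$; the graded $\Field[U]$-module structure needed as input is provided by Hom--Karakurt--Lidman~\cite{HKL}, and by Dai--Manolescu~\cite{DM} the covering involution corresponds to the natural involution on the associated graded root. Carrying this out --- exactly as in the sample computations behind Theorem~\ref{thm:pretzels} --- yields a normal form for $\Phi(Y_p)$, namely a ``staircase''-type $\iota$-complex whose length is a strictly increasing function of $p$.

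Finally, the linear independence is read off from the group structure of $\mathfrak{I}$. Assume $\sum_{i=1}^k n_i\,\Phi(Y_{p_i})\in\mathfrak{I}_{\mathrm{triv}}$ with $p_1<\dots<p_k$ and $n_k\neq 0$. Tensoring together the normal forms of the $Y_{p_i}$ and their duals (according to the signs of the $n_i$), and using that tensoring by a class in $\mathfrak{I}_{\mathrm{triv}}$ merely shifts gradings, the reduced connected homology of the resulting $\iota$-complex would have to vanish; but the longest staircase, coming from $Y_{p_k}$, produces a summand of $\HFB_{\rm{red\kotojel conn}}$ that the shorter staircases of $Y_{p_1},\dots,Y_{p_{k-1}}$ cannot cancel --- a contradiction. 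Hence all $n_i=0$, proving the asserted linear independence and, since there are infinitely many odd $p$, the inclusion $\Z^{\infty}\subset\mathcal{C}/(\mathcal{T}+\mathcal{QA})$. The main obstacle lies in the third step: extracting the \emph{full} local equivalence class of the $\iota$-complex of $Y_p$ --- not merely its correction terms or underlying $\Field[U]$-module --- from lattice homology, i.e. computing the covering involution on the graded root precisely enough; the remaining bookkeeping in the last step, verifying that the staircase lengths strictly increase with $p$ so that no accidental cancellation can occur, is delicate but essentially combinatorial.
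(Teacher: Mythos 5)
Your strategy coincides with the paper's: both compute the $\iota$-complex of $\Sigma(P(-p,2p-1,2p+1))$ via lattice homology and graded roots (citing \cite{HKL}, \cite{DM} and Theorem~\ref{montesinoscomp}), invoke the connected-sum formula of \cite[Prop.~7.1]{HHL}, and detect independence from a ``torsion-depth'' of $\HFB_{\rm{red\kotojel conn}}$. The paper makes this last step precise by defining $\omega(K)=\min\{n\geq 0 \mid U^n\cdot\HFB_{\rm{red\kotojel conn}}(K)=0\}$ and, rather than tensoring with duals as your $\mathfrak{I}$-group framing does, it moves the negative summands of a putative relation across the equation and compares $\omega(K_{a_1}\#\dots\#K_{a_s})=\max_i a_i$ with $\omega(K_{b_1}\#\dots\#K_{b_l}\#P)=\max_j b_j$ --- bookkeeping that avoids ever computing the connected homology of tensor products involving dual graded roots.
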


The paper is organized as follows. In Section~\ref{sec:definition} we
introduce branched knot Floer homology, and in
Section~\ref{sec:ConcordanceInv} we discuss the details of the
definition of the connected  Floer homology group $\HFB_{\rm{conn}}(K)$
of a knot $K\subset S^3$.  Section~\ref{sec:Vanishing} is devoted to
the proof of the vanishing results above, while in
Section~\ref{sec:Montesinos} we give a way to compute the invariants
for Montesinos knots; finally in Section~\ref{sec:IndepResults} we derive
some independence results in the smooth concordance group.

\bigskip

{\bf Acknowledgements:} We would like to thank Andr\'as N\'emethi
for numerous highly informative discussions.  The first and the third
author acknoweldges support from the NKFIH \emph{\'Elvonal} project
KKP126683.  The second author was partially supported by European
Research Council (ERC) under the European Union Horizon 2020 research
and innovation programme (grant agreement No 674978).

\section{Definition of  branched knot Floer homology}
\label{sec:definition}
Let $\DD=(\Sigma, \boa , \bob , z)$ be a pointed Heegaard diagram
which represents a rational homology sphere $Y$, and let $J_s$ be a
generic path of almost-complex structures on the $g$-fold symmetric
product $\Sym^g(\Sigma)$ (compatible with a symplectic structure
constructed in \cite{Perutz}).  Heegaard Floer homology \cite{OS2} assigns
to the pair $(\mathcal{H},J_s)$ a finitely generated, $\Q$-graded
chain complex $\CFm(\mathcal{H}, J_s)$ over the polynomial ring
$\F[U]$, graded so that $\deg U=-2$.  This chain complex is defined as the free
$\F[U]$-module generated by the intersection point 
of the Lagrangian tori 
$\T_\alpha=\alpha_1 \times \dots \times \alpha_g$ and
$\T_\beta=\beta_1 \times \dots \times \beta_g$ in $\Sym^g(\Sigma)$,
and is equipped with the differential
\begin{equation}\label{eq:differential}
\partial \x = \sum_{\y \in \T_\alpha \cap \T_\beta} \sum_{\{\phi \in
  \pi_2(\x,\y) | \mu(\phi)=1 \}} 
\# \left( \mathfrak{\mathcal{M}(\phi)}/{\R}\right) 
U^{n_z(\phi)} \cdot \y
\end{equation}
where $\# ( \mathcal{M}(\phi)/\R ) $ is the (mod 2)
number of points in the unparametrized moduli space
$\mathcal{M}(\phi)/\R$ of $J_s$-holomorphic strips with index
$\mu(\phi)=1$ representing the homotopy class $\phi \in \pi _2 (\x,
\y)$ and $n_z(\phi )$ is the intersection number
of $\phi $ with the divisor $V_z=\{ z\}\times \Sym ^{g-1}(\Sigma )$. 
For more details about Heegaard Floer homology see 
\cite{OS4, OS2, OS6A, OS6B}.

For a knot $K\subset S^3$, let $\Sigma (K)$ denote the double branched
cover of $S^3$ branched along $K$.  The three-manifold $\Sigma (K)$
comes with a natural map $\inv\colon \Sigma (K) \to \Sigma (K)$
(called the \emph{covering involution}) which interchanges points with
equal image under the branched covering map $\pi \colon \Sigma (K)\to
\Sigma(K)/\inv\simeq S^3$.  The fixed point set ${\rm{Fix}}(\inv
)={\widetilde {K}}$ maps homeomorphically to $K$ under $\pi$.  As the
notation suggests (since $H_1(S^3\setminus K; \Z )\cong \Z$), the
branched cover $\Sigma (K)$ in this case is determined by the branch
locus $K\subset S^3$.

Pulling back the Heegaard surface, as well as the $\boa$- and the
$\bob$-curves of a doubly-pointed Heegaard diagram $D=(\Sigma, \boa,
\bob, w_1, w_2)$ representing $K\subset S^3$ we get a pointed Heegaard
diagram
\[
\DD_D=\left(\widetilde{\Sigma}=\pi^{-1}(\Sigma),
\widetilde{\boa}=\pi^{-1}(\boa), \widetilde{\bob}=\pi^{-1}(\bob),
z=\pi^{-1}(w_1)\right)
\] 
of the double branched cover $\Sigma(K)$.  The covering projection
$\pi \colon \Sigma (K)\to S^3$ restricts to a double branched cover of
Riemann surfaces $\pi|_{\widetilde{\Sigma}}\colon \widetilde{\Sigma}
\to \Sigma$ with branch set $\{w_1, w_2\}$.
The restriction $\inv|_{\widetilde{\Sigma}}\colon
\widetilde{\Sigma} \to \widetilde{\Sigma}$ of the covering involution
$\inv$ represents the covering
involution of $\pi|_{\widetilde{\Sigma}}\colon \widetilde{\Sigma} \to
\Sigma$, and $\inv|_{\widetilde{\Sigma}}\colon \widetilde{\Sigma} \to
\widetilde{\Sigma}$ induces a self-diffeomorphism of the symmetric
product
\begin{equation}\label{eq:InvOnSym}
\oinv\colon \Sym^g(\widetilde{\Sigma})\to\Sym^g(\widetilde{\Sigma}) \ ,
\end{equation}
leaving $\T_{\widetilde{\alpha}}$ and $\T_{\widetilde{\beta}}$, as
well as the divisor $V_z=\{ z\}\times \Sym^{g-1}(\widetilde{\Sigma})$
invariant.

Pick a generic path of almost-complex structures $J_s\in
\mathcal{J}(\Sym^g(\widetilde{\Sigma}))$ (satisfying the usual
compatibility conditions with the chosen symplectic form on
the symmetric product) and consider the Heegaard Floer chain
complex $\CFm(\DD_D,J_s)$ associated to $(\DD_D, J_s)$. Recall that
there is a direct sum decomposition of $\CFm (\DD_D, J_s)$ indexed by
$\spinc$ structures:
\[
\CFm(\DD_D,J_s)= \bigoplus_{\s \in \Spinc(\Sigma(K))} \CFm(\DD_D,J_s; \s) \ . 
\] 

The first (singular) homology group of the double branched cover
$\Sigma(K)$ can be presented by $\theta+\theta^t$, where $\theta$ is a
Seifert matrix for $K$. Thus, $|H_1(\Sigma(K), \Z)|=\det
(\theta+\theta')=\det (K)$, which is an odd number. In particular,
$\Sigma(K)$ has a unique spin structure $\s_0$.  We will focus
on $\CFm(\mathcal{H}_D, J_s; \s_0)$, the summand of
$\CFm(\DD_D, J_s)$ associated to $\s_0$.

Note that if the path of almost-complex structures $J_s\in
\mathcal{J}(\Sym^g(\widetilde{\Sigma}))$ is chosen generically,
transversality is achieved for both $J_s$ and the push-forward
$\oinv_*J_s$ (where $\oinv$ is given in
Equation~\eqref{eq:InvOnSym}). For such a choice of almost-complex
structures we have well-defined Heegaard Floer chain complexes
$\CFm(\mathcal{H}_D, J_s)$ and $\CFm (\DD_D, \oinv_*J_s)$, and we can
consider the map
\begin{equation}\label{eq:eta}
\eta\colon \CFm(\mathcal{H}_D, J_s)\to \CFm(\mathcal{H}_D, \oinv_*J_s)
\end{equation}
sending a generator $\x=x_1+ \dots +x_g \in \T_{\widetilde{\alpha}}
\cap \T_{\widetilde{\beta}}\subset \Sym ^g ({\widetilde {\Sigma }})$
to $\oinv(\x)=\inv (x_1)+ \dots +\inv (x_g)$.

\begin{lem} 
The map $\eta$ is an isomorphism of chain complexes. Furthermore,
$\eta$ maps the summand $\CFm(\mathcal{H}_D, J_s; \s_0)$ 
of the spin structure $\s_0$ into itself.
\end{lem}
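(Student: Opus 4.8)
The plan is to unwind the definitions and check two things: that $\eta$ is a chain isomorphism, and that it respects the $\s_0$-summand. Both come down to the fact that $\eta$ is nothing but the map induced on the Floer complex by the biholomorphism $\oinv\colon \Sym^g(\widetilde\Sigma)\to\Sym^g(\widetilde\Sigma)$ carrying the Lagrangian tori $\T_{\widetilde\alpha},\T_{\widetilde\beta}$ to themselves and the basepoint divisor $V_z$ to itself, together with the naturality of the strip-counting construction under pushing forward the almost-complex structure.

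First I would record the underlying combinatorial bijection: since $\inv|_{\widetilde\Sigma}$ is a diffeomorphism of $\widetilde\Sigma$ sending $\widetilde\boa$ to $\widetilde\boa$ and $\widetilde\bob$ to $\widetilde\bob$, the induced map $\oinv$ restricts to a bijection $\T_{\widetilde\alpha}\cap\T_{\widetilde\beta}\to\T_{\widetilde\alpha}\cap\T_{\widetilde\beta}$, $\x\mapsto\oinv(\x)$; hence $\eta$ is an $\F[U]$-module isomorphism on the nose, being a permutation of the free generating set. Next I would check that $\eta$ intertwines the differentials. Given a class $\phi\in\pi_2(\x,\y)$, post-composition with $\oinv$ gives a class $\oinv_*\phi\in\pi_2(\oinv(\x),\oinv(\y))$; since $\oinv$ is a biholomorphism of the symplectic manifold $\Sym^g(\widetilde\Sigma)$ sending $J_s$ to $\oinv_*J_s$, it carries $J_s$-holomorphic representatives of $\phi$ to $\oinv_*J_s$-holomorphic representatives of $\oinv_*\phi$, preserving the Maslov index $\mu$ and inducing a diffeomorphism of the unparametrized moduli spaces $\mathcal M(\phi)/\R\cong\mathcal M(\oinv_*\phi)/\R$; in particular the mod-$2$ counts agree. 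Since $\oinv$ leaves $V_z$ invariant we also have $n_z(\oinv_*\phi)=n_z(\phi)$, so the $U$-powers match. Summing over $\y$ (equivalently over $\oinv(\y)$, since $\oinv$ permutes the intersection points) gives $\partial^{\oinv_*J_s}\circ\eta=\eta\circ\partial^{J_s}$, so $\eta$ is a chain isomorphism. One should note that $\eta$ shifts the Maslov grading by $0$: the grading on $\CFm$ is pinned down by the requirement that it be the Heegaard Floer grading of $\Sigma(K)$, and $\oinv$ is (isotopic to) a self-diffeomorphism of $\Sigma(K)$, so it preserves gradings; alternatively this follows because $\oinv_*\phi$ has the same Maslov index as $\phi$.

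For the second assertion, I would use that the $\spinc$-decomposition is natural under diffeomorphisms. The involution $\inv\colon\Sigma(K)\to\Sigma(K)$ acts on $\Spinc(\Sigma(K))$, and at the level of Heegaard diagrams this action is realized by $\oinv$ sending the generator $\x$ to $\oinv(\x)$ and $\s_\z(\x)$ to $\inv_*(\s_\z(\x))$ (the basepoint $z=\pi^{-1}(w_1)$ being $\oinv$-invariant). Hence $\eta$ maps the summand $\CFm(\mathcal H_D,J_s;\s)$ isomorphically onto $\CFm(\mathcal H_D,\oinv_*J_s;\inv_*\s)$. Since $\det(K)$ is odd, $\Sigma(K)$ has a unique spin structure, and any self-diffeomorphism must fix its $\spinc$-refinement $\s_0$; thus $\inv_*\s_0=\s_0$ and $\eta$ carries $\CFm(\mathcal H_D,J_s;\s_0)$ into itself. (Equivalently, one can argue that $\inv_*$ acts on the torsion $\spinc$ structures by the identity because it acts on $H^2(\Sigma(K);\Z)$, and $\s_0$ is the unique fixed point of the conjugation action, which $\inv_*$ commutes with.)

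The only genuinely non-formal point is the compatibility of moduli spaces under push-forward of the almost-complex structure, i.e.\ that $\oinv$ really does send $J_s$-holomorphic strips to $\oinv_*J_s$-holomorphic strips with the transversality needed to make the mod-$2$ counts meaningful on both sides. This is why the generic choice of $J_s$ was arranged earlier in the text so that transversality holds simultaneously for $J_s$ and $\oinv_*J_s$; granting that, the identification of moduli spaces is immediate from the chain rule applied to $u\mapsto\oinv\circ u$. Everything else is bookkeeping with basepoints and $\spinc$ structures. I expect the write-up to be short: state the bijection on generators, invoke naturality of $\#\mathcal M(\phi)/\R$ and of $n_z$ under the biholomorphism $\oinv$, and then quote the uniqueness of the spin structure for the $\s_0$ claim.
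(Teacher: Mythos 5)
Your main argument is correct and follows the paper's proof closely: $\eta$ is a permutation of the free generating set, the intertwining of the differentials comes from the bijection $u\mapsto\oinv\circ u$ between $J_s$-holomorphic and $\oinv_*J_s$-holomorphic strips (respecting $\mu$ and $n_z$), and the $\spinc$ claim reduces to $\inv_*\s_0=\s_0$ via uniqueness of the spin structure. The paper reaches that last point by a direct self-conjugacy computation $\overline{\tau_*(\s_0)}=\tau_*(\overline{\s_0})=\tau_*(\s_0)$; your statement that a diffeomorphism carries the unique spin structure to a spin structure is the same fact phrased differently. One caveat: the parenthetical alternative you offer, that ``$\inv_*$ acts on the torsion $\spinc$ structures by the identity,'' is false. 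The paper itself later cites \cite{grigsby2006knot,levine2008computing} for the fact that $\s_0$ is the \emph{unique} fixed point of the covering involution's action on $\Spinc(\Sigma(K))$, so $\inv_*$ is very much not the identity on the other $\spinc$ structures. The second half of that parenthetical (commuting with conjugation, $\s_0$ the unique self-conjugate class) is the genuine argument; drop the ``acts by the identity'' clause. The word ``biholomorphism'' is also a slight abuse, since no fixed integrable complex structure is in play; ``diffeomorphism intertwining $J_s$ and $\oinv_*J_s$'' is what is meant and is all that is used.
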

\begin{proof} 
It is obviously an isomorphisms of free $\F[U]$-modules; indeed,
$\eta^2=\id$ since $\tau$ is an involution.  To see that $\eta$
commutes with the differential, notice that $u \mapsto \inv \circ u$
provides a diffeomorphism between the moduli space of
$J_s$-holomorphic representatives of a homotopy class $\phi \in
\pi_2(\x,\y)$ and the moduli space of $\oinv_*J_s$-holomorphic
representatives of $\inv\circ \phi\in \pi_2(\inv(\x), \inv(\y))$).

To show that $\eta$ preserves the spin structure we argue as
follows. According to \cite[Section 2.6]{OS2} the choice of a
basepoint $z$ of $\DD_D$ determines a map $\s_z\colon
\T_{\widetilde{\alpha}} \cap \T_{\widetilde{\beta}} \to
\Spinc(\Sigma(K))$ and
\[ 
\CFm(\DD_D, J_s; \s)= \bigoplus_{\s_z(\x)=\s} \F[U] \cdot \x \ .
\]
It follows from the definition of $\s_z$ that
$\tau_*(\s_z(\x))=\s_z(\tau(\x))$. Thus if $\s_z(\x)=\s_0$, we have
that 
\[
\overline{\s_z(\tau(\x))}=\overline{\tau_*(\s_z(\x))}=
\overline{\tau_*(\s_0)}=\tau_*(\overline{\s_0})=\tau_*(\s_0)=\s_z(\tau(\x))
\]
proving that $\s_z(\tau(\x))$ is a self-conjugate $\spinc$ structure,
{\rm{i.e.}} spin. The claim now follows from the fact that
$\Sigma(K)$ has a unqiue spin structure.
\end{proof}

We define 
$\inv_\#\colon \CFm(\mathcal{H}_D, J_s; \s_0)\to \CFm(\mathcal{H}_D, J_s; \s_0)$
as the map $\eta\colon \CFm(\mathcal{H}_D, J_s; \s_0)\to
\CFm(\mathcal{H}_D, \tau_*J_s; \s_0)$ followed by the continuation map
\[
\Phi_{J_{s,t}}^-\colon\CFm(\mathcal{H}_D, \inv_*J_s; \s_0) \to
\CFm(\mathcal{H}_D, J_s; \s_0)
\] 
from \cite[Section 6]{OS2}, induced by a generic two-parameter family
$J_{s,t}$ of almost-complex structures interpolating between $J_s$
and $\inv_*J_s$:
\[
\inv _\# = \Phi _{J_{s,t}}\circ \eta .
\]

\begin{lem} 
$\inv_\#^2 \simeq \id$, where $\simeq$ denotes chain homotopy
  equivalence.
\end{lem}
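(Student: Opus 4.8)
The plan is to show that $\inv_\#^2 = \Phi_{J_{s,t}} \circ \eta \circ \Phi_{J_{s,t}} \circ \eta$ is chain homotopic to the identity by tracking how $\eta$ intertwines continuation maps for different almost-complex structure paths. The key point is that $\eta$ is not merely an isomorphism of chain complexes for a \emph{fixed} path $J_s$: because $\eta$ is induced by the geometric involution $\inv$ on $\Sym^g(\widetilde{\Sigma})$, it carries the moduli spaces governing a continuation map for a family $J_{s,t}$ to the moduli spaces governing the continuation map for the pushed-forward family $\oinv_* J_{s,t}$. So first I would record the naturality relation $\eta \circ \Phi^-_{J_{s,t}} = \Phi^-_{\oinv_* J_{s,t}} \circ \eta$ (as maps between the appropriate complexes), which follows from the same $u \mapsto \inv \circ u$ diffeomorphism of moduli spaces used in the previous lemma, now applied to index-zero strips in a one-parameter family.

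Next I would use this to rewrite $\inv_\#^2$. Writing $\eta$ for the chain isomorphism $\CFm(\mathcal{H}_D, J_s;\s_0) \to \CFm(\mathcal{H}_D, \oinv_* J_s; \s_0)$ and noting $\oinv_*(\oinv_* J_s) = J_s$ since $\oinv$ is an involution, we get $\eta^2 = \id$ as a map $\CFm(\mathcal{H}_D, J_s) \to \CFm(\mathcal{H}_D, J_s)$. Then
\[
\inv_\#^2 = \Phi^-_{J_{s,t}} \circ \eta \circ \Phi^-_{J_{s,t}} \circ \eta = \Phi^-_{J_{s,t}} \circ \Phi^-_{\oinv_* J_{s,t}} \circ \eta \circ \eta = \Phi^-_{J_{s,t}} \circ \Phi^-_{\oinv_* J_{s,t}},
\]
using the naturality relation to slide the second $\eta$ past the second continuation map. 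Now $\Phi^-_{\oinv_* J_{s,t}}$ runs from $\CFm(\mathcal{H}_D, J_s)$ to $\CFm(\mathcal{H}_D, \oinv_* J_s)$ (it is the continuation map for the family $\oinv_* J_{s,t}$, which interpolates from $\oinv_* J_s$ back to $J_s$ — wait, one must be careful with orientations of the interpolating path here), and $\Phi^-_{J_{s,t}}$ runs from $\CFm(\mathcal{H}_D, \oinv_* J_s)$ to $\CFm(\mathcal{H}_D, J_s)$. So the composite is a self-map of $\CFm(\mathcal{H}_D, J_s; \s_0)$ obtained by concatenating the path $J_{s,t}$ with the path $\oinv_* J_{s,t}$.

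The final step is to invoke the standard composition law for continuation maps in Lagrangian Floer homology (as in \cite[Section 6]{OS2}): the continuation map associated to a concatenation of two paths of almost-complex structures is chain homotopic to the composition of the individual continuation maps, and the continuation map for a path and then its reverse is chain homotopic to the identity. Here the concatenated path $J_{s,t} * \oinv_* J_{s,t}$ is a loop of almost-complex structures based at $J_s$ (it goes $J_s \rightsquigarrow \oinv_* J_s \rightsquigarrow J_s$), and since the space $\mathcal{J}(\Sym^g(\widetilde{\Sigma}))$ of compatible almost-complex structures is contractible, this loop is homotopic rel endpoints to the constant path, whose continuation map is the identity. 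Therefore $\inv_\#^2 = \Phi^-_{J_{s,t}} \circ \Phi^-_{\oinv_* J_{s,t}} \simeq \Phi^-_{\text{const}} \simeq \id$.

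The main obstacle I anticipate is bookkeeping rather than conceptual: getting the direction and orientation of the interpolating paths exactly right so that the naturality square for $\eta$ and continuation maps is stated correctly, and ensuring that the homotopy invariance of continuation maps under homotopies of the path (which requires the usual index-one-parameter-family gluing and the contractibility of $\mathcal{J}$) is applied to a genuine loop based at $J_s$. One should also check that everything stays within the $\s_0$-summand throughout — but this is immediate from the previous lemma, since $\eta$ preserves $\s_0$ and continuation maps preserve $\spinc$ structures. Once the naturality relation $\eta \circ \Phi^-_{J_{s,t}} = \Phi^-_{\oinv_* J_{s,t}} \circ \eta$ is nailed down, the rest is the formal manipulation above together with the cited composition and homotopy-invariance properties of continuation maps.
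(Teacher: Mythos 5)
Your proposal is correct and follows essentially the same approach as the paper's proof: establish the intertwining relation $\eta \circ \Phi^-_{J_{s,t}} = \Phi^-_{\oinv_* J_{s,t}} \circ \eta$ via the $u \mapsto \inv \circ u$ diffeomorphism of moduli spaces, use $\eta^2 = \id$ to reduce $\inv_\#^2$ to the composite continuation map $\Phi^-_{J_{s,t}} \circ \Phi^-_{\oinv_* J_{s,t}}$, and then exploit contractibility of the space of compatible almost-complex structures to null-homotope the resulting loop and conclude. The paper merely makes the last step more explicit by writing out the three-parameter family $J_{s,t,x}$ and the index $\mu(\phi) = -1$ moduli spaces giving the chain homotopy, which is the standard content of the composition/homotopy-invariance property you cite.
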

\begin{proof} 
Consider
\[
\Phi_{J_{s,t}}^-(\x)=\sum_{\y \in \T_{\widetilde{\alpha}} \cap
  \T_{\widetilde{\beta}}} \sum_{ \{\phi\in \pi_2(\x,\y) \ |
  \ \mu(\phi)=0\} } \# \left( \mathcal{M}_{J_{s,t}}(\phi)\right)
\ U^{n_z(\phi)} \cdot \y
\] 
where $\mathcal{M}_{J_{s,t}}(\phi)$ denotes
the moduli spaces of $J_{s,t}$-holomorphic strips.

Given $\x \in \T_{\widetilde{\alpha}} \cap \T_{\widetilde{\beta}}$ one
computes
\begin{align*}
\eta\circ \Phi_{J_{s,t}}^-(\x)&=\sum_{\y \in \T_{\widetilde{\alpha}}
  \cap \T_{\widetilde{\beta}}} \sum_{ \{\phi\in \pi_2(\x,\y) \ |
  \ \mu(\phi)=0\} } \#\left(\mathcal{M}_{J_{s,t}}(\phi)\right)
\ U^{n_z(\phi)} \cdot \tau(\y)\\ &=\sum_{\y \in
  \T_{\widetilde{\alpha}} \cap \T_{\widetilde{\beta}}} \sum_{
  \{\phi\in \pi_2(\x,\tau(\y)) \ | \ \mu(\phi)=0\} } \# \left(
\mathcal{M}_{J_{s,t}}(\phi)\right) \ U^{n_z(\phi)} \cdot
\y\\ &=\sum_{\y \in \T_{\widetilde{\alpha}} \cap
  \T_{\widetilde{\beta}}} \sum_{ \{\phi\in \pi_2(\tau(\x),\y) \ |
  \ \mu(\phi)=0\} } \# \left( \mathcal{M}_{J_{s,t}}(\inv \circ
\phi)\right) \ U^{n_z(\tau \circ \phi)} \cdot \y\\ &=\sum_{\y \in
  \T_{\widetilde{\alpha}} \cap \T_{\widetilde{\beta}}} \sum_{
  \{\phi\in \pi_2(\tau(\x),\y) \ | \ \mu(\phi)=0\} } \# \left(
\mathcal{M}_{J_{s,t}}(\phi) \right) \ U^{n_z(\phi)} \cdot
\y\\ &=\Phi_{\tau_*J_{s,t}}^-(\tau(\x))\ ,
\end{align*}
hence the identity $\eta\circ \Phi_{H}^-=\Phi_{\tau_*H}^-\circ \eta$
follows. Thus,
\[
\inv _\#^2= \Phi_{J_{s,t}}^- \circ \eta \circ \Phi_{J_{s,t}}^- \circ
\eta= \Phi_{J_{s,t}}^- \circ \Phi_{\tau_*J_{s,t}}^-\circ \eta^2=
\Phi_{J_{s,t}}^- \circ \Phi_{\tau_*J_{s,t}}^- \ ,
\] 
where the last identity holds because $\eta^2=\id$ (a consequence of
the fact that $\tau\colon \Sigma(K) \to \Sigma(K)$ is an involution).

By concatenating $J_{s,t}$ and $\tau_*J_{s,t}$ we obtain a
one-parameter family of paths of almost-complex structures describing a closed
loop based at the path $J_{s}$. Since the space of almost complex
structures (compatible with the fixed symplectic structure) is
contractible, we can find a three-parameter family of almost complex
structures $J_{s,t,x}$ interpolating between the juxtaposition of
$J_{s,t}$ and $\tau_*J_{s,t}$, and $J_{s,t,1}\equiv J_{s}$. As pointed
out in \cite[Section 6]{OS2}, a generic choice of $J_{s,t,x}$ produces
smooth moduli spaces
\[\ \ \ \ \ \ \  \ \ \ \ \ \  \mathcal{M}_{J_{s,t,x}}(\phi)= \bigcup_{c \in [0,1]} \mathcal{M}_{J_{s,t,c}}(\phi) \ \ \ \ \ \ \  \ \phi \in \pi_2(\x,\y) \]  
of dimension $\mu(\phi)+1$. These can be used to produce a chain homotopy equivalence 
\[
H^-_{J_{s,t,x}}(\x) =\sum_{\y \in \T_{\widetilde{\alpha}} \cap
  \T_{\widetilde{\beta}}} \sum_{ \{\phi\in \pi_2(\x,\y) \ |
  \ \mu(\phi)=-1\} } \# \left(\mathcal{M}_{J_{s,t,x}}(\phi)
\right)\ U^{n_z(\phi)} \cdot \y
\]
between $\Phi_{J_{s,t}}^- \circ \Phi_{\tau_*J_{s,t}}^-$ and $\id$, concluding
the argument.
\end{proof}

In summary, for a knot $K\subset S^3$ there is a homotopy involution
$\inv_\#\colon \CFm(\Sigma(K),\s_0) \to \CFm(\Sigma(K),\s_0)$
associated to the covering involution $\inv\colon \Sigma(K) \to
\Sigma(K)$.  In order to derive knot invariants from the pair $(\CFm
(\DD, \s_0), \inv _\#)$, we follow ideas from \cite{HM} and form the
mapping cone of $\inv _\# + \id\colon \CFm(\Sigma(K),\s_0)\to
\CFm(\Sigma(K),\s_0)$, written equivalently as
\[
\CFB (K)= \left( \CFm (\Sigma(K),\s_0)[-1] \otimes \F[Q]/ (Q^2),
\ \partial_{\rm{cone}}= \partial + Q \cdot (\inv_\# +\id) \right), 
\]   
where $\deg Q=-1$. Recall that $\CFm (\Sigma (K), \s _0)$ admits an
absolute $\Q$-grading, and $\inv _\#$ preserves this grading, hence
$\CFB (K)$ also admits an absolute $\Q$-grading.  Taking homology we
get the group $\HFB (K)=H_*(\CFB(K))$, which is now a module over the
ring $\Field [U,Q]/(Q^2)$. We call $\HFB (K)$ the \defin{branched
  Heegaard Floer homology} of the knot $K\subset S^3$.  Now we are
ready to turn to the proof of the first statement announced in
Section~\ref{sec:intro}:

\begin{proof}[Proof of Theorem~\ref{thm:BranchedKnotHom}] 
The proof is similar to the one of \cite[Proposition 2.8]{HM}.
Independence from the chosen path of almost-complex structures is
standard Floer theory. For independence from the chosen doubly pointed
Heegaard diagram of $K$, we argue as follows: A doubly pointed
Heegaard diagram $D=(\Sigma, \boa, \bob, w_1, w_2)$ representing the
knot $K \subset S^3$ can be connected to any other doubly pointed
diagram $D '=(\Sigma', \boa', \bob', w_1 ', w_2 ')$ of $K$ by a
sequence of isotopies and handleslides of the $\boa$-curves (or
$\bob$-curves) supported in the complement of the two basepoints, and
by stabilizations (i.e., forming the connected sum of $\Sigma$ with a
torus $T^2$ equipped with a new pair of curves $\alpha_{g+1}$ and
$\beta_{g+1}$ which meet transversally in a single point).
A sequence of these moves lifts to a sequence of pointed Heegaard
moves of the pull-back diagrams $\DD_D$ and $\DD_{D'}$ with underlying
three-manifold the double branched cover $\Sigma(K)$. According to
\cite{JT} the choice of such a sequence of Heegaard moves yields a
natural chain homotopy equivalence $\psi\colon \CFm (\DD_D) \to
\CFm (\DD_{D'})$ which fits into the diagram
\begin{equation} \label{diagramFirst}
\xymatrix{\CFm (\DD_{D}) \ar[r]^{\tau_\# } \ar[d]^{\psi} & \CFm (\DD_{D})  \ar[d]^{\psi}\\ 
\CFm (\DD_{D'}) \ar[r]^{\tau_\#' }  &\CFm (\DD_{D'})\\} 
\end{equation}
that commutes up to chain homotopy. Let $\Gamma\colon \CFm (\DD_{D})
\to \CFm (\DD_{D'})$ be a map realizing the chain homotopy
equivalence. Then the map $f\colon {\rm{Cone}}(\tau_\#+\id) \to
{\rm{Cone}}(\tau_\#'+\id )$ defined by $f= \psi+ Q \cdot (\psi +\Gamma)$
is a quasi-isomorphism. Indeed, $f$ is a filtered map with respect to
the two step filtration of the mapping cones, and since it induces an
isomorphism on the associated graded objects, it is a
quasi-isomorphism.
\end{proof}

Notice that the mapping cone exact sequence associated to $\CFB
(K)=\Cone (\inv _\# +\id)$ reads as an exact triangle
\begin{equation}\label{eq:exact}
\begin{tikzcd}[column sep=small]
 \HFm (\Sigma(K),\s_0) \arrow{rr}{\inv_* +\id} &     & \HFm (\Sigma(K), \s_0) \arrow{ld}{j_*} \\
&\HFB (K)  \arrow{lu}{p_*}   & 
\end{tikzcd}
\end{equation}
in which $j_*$ preserves the grading, and $p_*$ drops it by one.  In
particular, if $\inv _*=\id$, the horizontal map in the above triangle
is zero, and in that case $\HFB (K)$ is the sum of two copies of $\HFm
(\Sigma (K), \s _0)$ (with the grading on one copy shifted by one).

A close inspection of the exact triangle above reveals that, as
$\Field [U]$-modules, we have
\[
\HFB (K)= \F[U]_{(\overline{\delta})} \oplus \F[U]_{(\underline{\delta}+1)} \oplus \left( \F[U]{\rm{-torsion}}\right) \ . 
\]
We set $\underline{\delta}(K)=\underline{\delta}$ and
$\overline{\delta}(K)=\overline{\delta}$, which (by the above
discussion) are knot invariants. Notice that $\underline{\delta}(K),
\overline{\delta}(K)\in \Q$, $\underline{\delta}(K) \equiv \delta (K)
\equiv \overline{\delta}(K) \ \rm{ mod} \ 2$, and
$\underline{\delta}(K) \leq \delta(K) \leq \overline{\delta}(K)$,
where $\delta (K)$ is the Ozsv\'ath-Szab\'o correction term of
$(\Sigma (K), \s _0)$, hence $\delta (K)$ is half the Manolescu-Owens
invariant of $K$ introduced in \cite{ManOwens}.

\section{Concordance invariants from $(\CFm (\Sigma (K), \s _0), \inv _\#$)}
\label{sec:ConcordanceInv}

Adapting ideas from \cite{HHL}, the chain complex $\CFm (\Sigma (K), \s _0)$,
equipped with $\inv _\#$, provides concordance invariants of the knot  $K$ 
as follows. Recall \cite[Definition~2.5]{HHL} regarding $\iota$-complexes:

\begin{defi} 
\label{def:iotaComplex}
An \defin{$\iota$-complex} $(C, \iota)$ is a finitely generated, free,
$\Q$-graded chain complex $C$ over $\F[U]$ together with a chain map
$\iota \colon C \to C$ where $C$ is supported in degree $\tau + \Z$
for some $\tau \in \Q$
(multiplication by $U$ drops the $\Q$-grading by two), the
homology of the localization $U^{-1}H_*(C)=H_*(C \otimes_{\F[U]}
\F[U,U^{-1}])$ is isomorphic to $\F[U,U^{-1}]$ via an isomorphism
preserving the relative $\Z$-grading, and $\iota $ is a grading preserving,
$U$-equivariant chain map  which is a homotopy involution.
\end{defi}

We will consider $\iota$-complexes up to \textit{local equivalence}
(see \cite[Definitions~2.6 and 2.7]{HHL}).
\begin{defi}
\label{def:LocalEquivalence}
A \defin{local equivalence} $f\colon C \to C'$ of two
$\iota$-complexes $(C, \iota)$ and $(C', \iota')$ is a grading
preserving, $U$-equivariant chain map $f\colon C \to C'$ such that
\begin{itemize}
\item $\iota' \circ  f \simeq f \circ \iota$, i.e. the two 
compositions are  chain homotopy equivalent,
\item $f$ induces an isomorphisms $f_\text{loc}$ on  the localization $U^{-1}H_*(C)$.
\end{itemize}
\end{defi}

\begin{defi}
Two $\iota$-complexes $(C, \iota)$ and $(C', \iota')$ are
\defin{locally equivalent} if there exist local equivalences $f\colon C
\to C'$ and $g\colon C' \to C$. If in addition we have $f \circ
g\simeq \id$ and $g \circ f \simeq \id$, then $(C,
\iota)$ and $(C', \iota')$ are \defin{chain homotopy equivalent}
$\iota$-complexes. 
\end{defi}

Given an $\iota$-complex $(C, \iota)$ we can look at the set
$\text{End}_\text{loc}(C, \iota)$ of its self-local equivalences
$f\colon C \to C$. This can be partially ordered by defining $f
\preceq g$ if and only if ${\Ker}\, f \subset {\Ker}\, g$. We say that
$f\in {\rm{End}}_{\rm{loc}}(C)$ is a \defin{maximal self-local
  equivalence} if it is maximal with respect to this ordering.  By
Zorn's lemma maximal self-local equivalences always exist. The
following lemma summarises the results of \cite[Section 3]{HHL}.

\begin{lem}
\label{lem:algebra}
Let $(C, \iota)$ be an $\iota$-complex. Then 
\begin{enumerate}
\item \label{due} if $f \in {\rm{End}}_{\rm{loc}}(C, \iota)$ is a maximal
  self-local equivalence, then $\iota$ restricts to a homotopy involution $\iota^{\Imm \, f}$ of $\Imm \, f$. Furthermore, $(\Imm \, f, \iota^{\Imm \, f})$ is locally equivalent to $(C, \iota)$;
\item \label{tre} if $f,h \in {\rm{End}}_{\rm{loc}}(C, \iota)$ are two
  maximal self-local equivalences, then there is a chain homotopy
  equivalence $(\Imm \, f , \iota^{\Imm \, f}) \simeq (\Imm \, h, \iota^{\Imm \, h})$ of $\iota$-complexes;
\item \label{quattro} if $(C', \iota')$ is an $\iota$-complex locally
  equivalent to $(C, \iota)$ and $f \in {\rm{End}}_{\rm{loc}}(C,\iota)$, and $h
  \in {\rm{End}}_{\rm{loc}}(C', \iota')$ are self-local equivalences then
  there is a chain homotopy equivalence $(\Imm \, f, \iota^{\Imm \, f}) \simeq (\Imm \, h, \iota^{\Imm \, h})$ of
  $\iota$-complexes. \qed
\end{enumerate}
\end{lem}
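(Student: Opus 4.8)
The plan is to prove the three statements of Lemma~\ref{lem:algebra} in sequence, following the strategy of \cite[Section 3]{HHL} but keeping track of the fact that here the ambient complex is $\CFm(\Sigma(K),\s_0)$ with the covering involution $\inv_\#$, rather than an abstract $\iota$-complex. The key algebraic input is standard: an endomorphism $f$ of an $\iota$-complex $(C,\iota)$ is a self-local equivalence precisely when $f_{\mathrm{loc}}$ is an isomorphism and $f\circ\iota\simeq\iota\circ f$, and such an $f$ always has a well-defined mapping cone and image. I will first establish part \eqref{due}. Given a maximal self-local equivalence $f$, the image $\Imm\, f$ is a subcomplex of $C$ on which $U$ acts, and since $f_{\mathrm{loc}}$ is an isomorphism, the inclusion $\Imm\, f\hookrightarrow C$ is itself a local equivalence (it induces an isomorphism on the localized homology, as $U^{-1}H_*(\Imm\, f)\cong U^{-1}H_*(C)\cong\F[U,U^{-1}]$). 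To see that $\iota$ restricts: one uses that $\iota$ commutes with $f$ up to homotopy, so $\iota(\Imm\, f)$ and $\Imm\, f$ agree up to a homotopy, and a maximality argument (replacing $f$ by $\iota\circ f\circ\iota$ if necessary, which is again a self-local equivalence with kernel contained in that of a maximal element) forces $\iota$ to preserve $\Imm\, f$ up to chain homotopy equivalence; this yields the homotopy involution $\iota^{\Imm\, f}$, and local equivalence of $(\Imm\, f,\iota^{\Imm\, f})$ with $(C,\iota)$ follows from the inclusion being a local equivalence intertwining the involutions.

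For part \eqref{tre}, let $f,h$ be two maximal self-local equivalences. The composite $h\circ f\colon C\to C$ is again a self-local equivalence (composition of grading-preserving $U$-equivariant chain maps, each inducing an isomorphism on localization, and $h\circ f$ commutes with $\iota$ up to homotopy since each factor does). Its kernel contains $\Ker\, f$, so by maximality of $f$ we get $\Ker(h\circ f)=\Ker\, f$; similarly, considering $f\circ h$ and maximality of $h$, one gets $\Ker(f\circ h)=\Ker\, h$. These equalities of kernels, combined with the fact that $f$ and $h$ map onto $\Imm\, f$ and $\Imm\, h$ respectively, are exactly what is needed to show that the restrictions $h|_{\Imm\, f}\colon \Imm\, f\to\Imm\, h$ and $f|_{\Imm\, h}\colon\Imm\, h\to\Imm\, f$ are mutually inverse up to chain homotopy — one checks $(f|_{\Imm\, h})\circ(h|_{\Imm\, f})$ agrees with $f\circ h$ restricted appropriately and hence is homotopic to the identity on $\Imm\, f$, using that $\Imm(f\circ h)=\Imm\, f$ (a consequence of the kernel equalities together with rank-one-ness after localization), and symmetrically. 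Since both restrictions intertwine the respective restricted involutions up to homotopy, this gives the desired chain homotopy equivalence of $\iota$-complexes $(\Imm\, f,\iota^{\Imm\, f})\simeq(\Imm\, h,\iota^{\Imm\, h})$.

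Part \eqref{quattro} then follows by combining \eqref{tre} with the transitivity of local equivalence. If $(C',\iota')$ is locally equivalent to $(C,\iota)$, fix local equivalences $\varphi\colon C\to C'$ and $\psi\colon C'\to C$. Given self-local equivalences $f\in\mathrm{End}_{\mathrm{loc}}(C,\iota)$ and $h\in\mathrm{End}_{\mathrm{loc}}(C',\iota')$, we may by \eqref{due} and \eqref{tre} assume without loss of generality that $f$ and $h$ are maximal (replacing each by a maximal self-local equivalence changes $\Imm$ only up to chain homotopy equivalence of $\iota$-complexes). Now $\psi\circ h\circ\varphi\colon C\to C$ is a self-local equivalence of $(C,\iota)$, and by \eqref{tre} its image is chain homotopy equivalent as an $\iota$-complex to $\Imm\, f$; on the other hand, arguing as in \eqref{tre} (now with $h$ maximal and $\psi,\varphi$ local equivalences inducing isomorphisms on localization), the image of $\psi\circ h\circ\varphi$ is chain homotopy equivalent to $\Imm\, h$. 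Chaining these equivalences gives $(\Imm\, f,\iota^{\Imm\, f})\simeq(\Imm\, h,\iota^{\Imm\, h})$.

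The main obstacle I anticipate is not any single step but the bookkeeping in part \eqref{tre}: one must be careful that ``$\Ker(h\circ f)=\Ker\, f$'' genuinely forces $h$ to restrict to an \emph{injective-up-to-homotopy}, indeed homotopy-invertible, map on $\Imm\, f$, and that the homotopies intertwining the involutions can be chosen compatibly so that the final equivalence really is one of $\iota$-complexes and not merely of the underlying $\F[U]$-complexes. This is where the rank-one-ness of $U^{-1}H_*(C)$ is used essentially — it guarantees that a self-local equivalence with kernel as small as possible is, after localization, simply multiplication by a unit, which rigidifies the situation enough to push the homotopies through. Since all of this is carried out in \cite[Section 3]{HHL} for abstract $\iota$-complexes and $\CFm(\Sigma(K),\s_0)$ with $\inv_\#$ is an $\iota$-complex by the construction of Section~\ref{sec:definition}, the proof amounts to citing and lightly re-verifying that argument in the present setting.
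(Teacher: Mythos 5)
The paper gives no proof of Lemma~\ref{lem:algebra}: it is stated without proof, immediately following the sentence declaring it a summary of the results of \cite[Section~3]{HHL}, so the paper's ``argument'' is that citation together with the surrounding observation that $(\CFm(\Sigma(K),\s_0),\inv_\#)$ is an $\iota$-complex. You correctly identify this and take the same top-level route, but your reconstruction of part~\eqref{quattro} contains a genuine gap. The parenthetical justification of the ``without loss of generality'' step --- that replacing a self-local equivalence by a maximal one changes $\Imm$ only up to chain homotopy equivalence of $\iota$-complexes --- does not follow from parts~\eqref{due} and~\eqref{tre}: those compare two \emph{maximal} self-local equivalences, whereas a non-maximal one can have a strictly larger image (the identity map is always a self-local equivalence, with image all of $C$). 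The hypothesis of part~\eqref{quattro} should be read as ``maximal self-local equivalences'' as in \cite{HHL} (the word appears to have been dropped in the paper's statement); with that reading the WLOG step becomes vacuous, but then your subsequent appeal to part~\eqref{tre} is still not licensed, since the composite $\psi\circ h\circ\varphi$ you form need not itself be maximal, so you cannot directly conclude that its image is homotopy equivalent to $\Imm \, f$. The argument in \cite{HHL} instead compares $\Imm \, f$ and $\Imm \, h$ directly, via maps of the form $f\circ\psi\circ h$ and $h\circ\varphi\circ f$, using the maximality of both $f$ and $h$.

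A smaller point on part~\eqref{tre}: ``mutually inverse up to chain homotopy'' is neither what your kernel equalities deliver nor what is needed. They show that $h|_{\Imm \, f}\colon\Imm \, f\to\Imm \, h$ and $f|_{\Imm \, h}\colon\Imm \, h\to\Imm \, f$ are each injective; since these are grading-preserving maps between complexes with finite-dimensional graded pieces, each is already an \emph{isomorphism} of the underlying chain complexes, and that single isomorphism, together with homotopy-commutation with the induced involutions, is the desired chain homotopy equivalence of $\iota$-complexes. The composite $(f\circ h)|_{\Imm \, f}$ is an isomorphism of $\Imm \, f$ with itself but need not be homotopic to the identity, so framing the conclusion around mutual inverses obscures the actual mechanism.
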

Since
\[
H_*(\CFm (\Sigma (K), \s_0) \otimes_{\F[U]} \F[U,U^{-1}])=\HFinf
(\Sigma(K), \s_0)= \F[U, U^{-1}]\ ,
\]
the pair $(\CFm (\Sigma(K), \s_0), \tau_\#)$ of the Heegaard Floer
chain complex of the double branched cover of a knot $K\subset S^3$
(equipped with the homotopy involution $\inv _\#$ induced by the
covering involution) is an $\iota$-complex associated to $K$.  Given a
maximal self-local equivalence $f_{\rm{max}}\colon \CFm (\Sigma(K),
\s_0) \to \CFm (\Sigma(K), \s_0)$ we define $\HFB_{\rm{conn}}(K)$, the
\defin{connected  Floer homology} of the knot $K\subset S^3$ as
$H_*({\Imm} \, f_{\rm{max}})$.  As an application of Lemma
\ref{lem:algebra}, it is then easy to see that the resulting group is a
knot invariant:
 
\begin{thm}\label{thm:ConnKnotC}
The chain homotopy type of $\Imm \, f_{\max}$ is independent of the
choice of the maximal self-local equivalence $f_{\max} \in
{\rm{End}}_{\rm{loc}}(\CFm (\Sigma(K), \s_0), \tau_\#)$. \qed
\end{thm}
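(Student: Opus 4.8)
The plan is to deduce Theorem~\ref{thm:ConnKnotC} directly from the algebraic machinery of Lemma~\ref{lem:algebra}, which was set up precisely for this purpose. The key observation is that the pair $(\CFm(\Sigma(K), \s_0), \inv_\#)$ has already been verified to be an $\iota$-complex in the sense of Definition~\ref{def:iotaComplex}: the underlying complex is finitely generated, free, and $\Q$-graded over $\F[U]$; its localization computes $\HFinf(\Sigma(K), \s_0) = \F[U,U^{-1}]$ since $\Sigma(K)$ is a rational homology sphere; and $\inv_\#$ is a grading-preserving, $U$-equivariant homotopy involution by the two lemmas proved in Section~\ref{sec:definition}. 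Having established this, the statement is an immediate instance of part~\eqref{tre} of Lemma~\ref{lem:algebra}.

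First I would recall that maximal self-local equivalences of $(\CFm(\Sigma(K), \s_0), \inv_\#)$ exist at all: the set $\mathrm{End}_\mathrm{loc}(\CFm(\Sigma(K), \s_0), \inv_\#)$ is nonempty (it contains the identity), it is partially ordered by inclusion of kernels, and Zorn's lemma applies as noted after Definition~\ref{def:LocalEquivalence}. Thus $f_{\max}$ can always be chosen. Next, given two maximal self-local equivalences $f_{\max}, h_{\max} \in \mathrm{End}_\mathrm{loc}(\CFm(\Sigma(K), \s_0), \inv_\#)$, part~\eqref{due} of Lemma~\ref{lem:algebra} tells us that $\inv_\#$ restricts to homotopy involutions $\inv_\#^{\Imm\, f_{\max}}$ and $\inv_\#^{\Imm\, h_{\max}}$ on the respective images, making $(\Imm\, f_{\max}, \inv_\#^{\Imm\, f_{\max}})$ and $(\Imm\, h_{\max}, \inv_\#^{\Imm\, h_{\max}})$ into $\iota$-complexes. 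Then part~\eqref{tre} gives a chain homotopy equivalence of $\iota$-complexes between them; in particular the underlying $\F[U]$-complexes $\Imm\, f_{\max}$ and $\Imm\, h_{\max}$ are chain homotopy equivalent, and passing to homology yields the desired identification of $\HFB_\mathrm{conn}(K) = H_*(\Imm\, f_{\max})$.

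There is no substantial obstacle here, since all the hard algebraic work has been absorbed into Lemma~\ref{lem:algebra}, which is quoted from \cite{HHL}. The only points requiring a sentence of care are: (i) confirming that the grading and $U$-equivariance hypotheses of Definition~\ref{def:iotaComplex} hold for $(\CFm(\Sigma(K), \s_0), \inv_\#)$ — this follows because $\inv_\#$ is built from $\eta$, which permutes generators, and the continuation map $\Phi^-_{J_{s,t}}$, both of which are $U$-equivariant and grading-preserving; and (ii) noting that a chain homotopy equivalence of $\iota$-complexes in particular induces an isomorphism on homology as graded $\F[U]$-modules, so that the invariance statement holds at the level claimed. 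One might also remark, for completeness, that well-definedness up to the chosen Heegaard diagram and almost-complex structure is not at issue in this theorem: that was handled in the proof of Theorem~\ref{thm:BranchedKnotHom}, and here we are only addressing independence from the auxiliary choice of $f_{\max}$.
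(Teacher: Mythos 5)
Your proof is correct and matches the paper's approach exactly: the paper states Theorem~\ref{thm:ConnKnotC} with a \qed precisely because, having just verified that $(\CFm(\Sigma(K), \s_0), \inv_\#)$ is an $\iota$-complex, the result is an immediate instance of Lemma~\ref{lem:algebra}\eqref{tre}. You spell out the verification of the $\iota$-complex hypotheses and the existence of $f_{\max}$ in slightly more detail than the paper does, but the key step — invoking the algebraic uniqueness statement from \cite{HHL} — is the same.
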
 

We now turn to the proof of concordance invariance of the groups
$\HFB_{\rm{conn}}(K)$. The following naturality statement will
be needed in the proof.

\begin{lem}(Ozsv\' ath \& Szab\' o, Zemke, \cite{OS24,Z1})
\label{lem:naturalitylemma} 
Let $Y$ and $Y'$ be two three-manifolds equipped with self-diffeomorphisms
$\tau\colon Y \to Y$ and $\tau'\colon Y' \to Y'$. Suppose that
$W\colon Y \to Y'$ is a cobordism and that there exists a
self-diffeomorphism $T\colon W \to W$ restricting to $\tau$ and
$\tau'$ on the two ends of $W$. Then
\[\tau_\#' \circ F_{W, \mathfrak{t}}^-  =F_{W, T_* \mathfrak{t}}^- \circ \tau_\# \]
for every $\mathfrak{t} \in \Spinc(W) $. \qed
\end{lem}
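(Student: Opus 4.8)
This statement is essentially the functoriality of the cobordism maps of Heegaard Floer homology with respect to diffeomorphisms, so the plan is to reduce it to the naturality package of Juh\'asz--Thurston \cite{JT} and Zemke \cite{Z1} rather than to reprove it. First I would recall that, for a connected cobordism $W\colon Y\to Y'$ between rational homology spheres, the map $F^-_{W,\mathfrak{t}}$ is defined by choosing a handle decomposition of $W$ (equivalently, a sequence of Heegaard triple diagrams subordinate to a Morse function on $W$) and composing the corresponding one-, two-, and three-handle maps with the holomorphic triangle maps; by the naturality results this composite depends only on $(W,\mathfrak{t})$ up to chain homotopy, and in particular is independent of the chosen handle decomposition and of the auxiliary analytic data.

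Next I would analyze the behaviour of these ingredients under the self-diffeomorphism $T\colon W\to W$. Pulling back a handle decomposition $\mathcal{D}$ of $W$ by $T$ yields another handle decomposition $T^{*}\mathcal{D}$, whose associated Heegaard (triple) diagrams are the images under $T$, and under its restrictions to the levels of a Morse function, of those of $\mathcal{D}$; since $T$ carries $\mathfrak{t}$ to $T_*\mathfrak{t}$, the cobordism map computed from $T^{*}\mathcal{D}$ equals $F^-_{W,T_*\mathfrak{t}}$. On the other hand, each elementary piece of this computation intertwines with the diffeomorphism-induced maps: the handle maps commute with the maps induced by pushing forward the Heegaard data, and holomorphic triangle counts for a triple $(\Sigma,\boa,\bob,\bog)$ are carried bijectively to those for $(T(\Sigma), T(\boa), T(\bob), T(\bog))$ --- exactly as in the argument showing that $\eta$ is a chain map. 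Composing these intertwining relations along the handle decomposition, and using that $T$ restricts to $\tau$ on the incoming end and to $\tau'$ on the outgoing end, yields
\[
\tau'_\# \circ F^-_{W,\mathfrak{t}} \simeq F^-_{W,T_*\mathfrak{t}} \circ \tau_\#,
\]
which becomes the asserted equality after passing to homology.

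The step I expect to require the most care --- and the reason the lemma is attributed to Ozsv\'ath--Szab\'o and Zemke rather than proved from scratch --- is making precise that $\tau_\#$ and $\tau'_\#$ are well-defined up to chain homotopy and that all the auxiliary choices (almost-complex structures, intermediate Heegaard moves, and the continuation maps relating $\mathcal{D}$ to $T^{*}\mathcal{D}$) can be tracked coherently through the composition, so that the diagram commutes on the nose after homology. This is exactly the content of Zemke's functoriality theorem for cobordism maps \cite{Z1} together with the Juh\'asz--Thurston naturality of $\CFm$; once these are invoked, the remaining argument is bookkeeping, the only geometric input being the existence of the global self-diffeomorphism $T$ with the prescribed restrictions to the two ends of $W$. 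I would therefore present the proof as an application of \cite{Z1} (and \cite{OS24} for the handle-by-handle construction of $F^-_{W,\mathfrak{t}}$), spelling out the pullback-of-handle-decomposition computation in just enough detail to see where the hypothesis on $T$ enters.
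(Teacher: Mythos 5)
The paper itself gives no proof of this lemma: it is stated with a terminal \verb|\qed| and attributed outright to Ozsv\'ath--Szab\'o and Zemke via the citations \cite{OS24,Z1}. Your sketch is a faithful expansion of exactly what those references supply, and it correctly identifies the two essential ingredients: (i) the cobordism map $F^-_{W,\mathfrak{t}}$ is built from a handle decomposition and holomorphic polygon counts, and by the Juh\'asz--Thurston/Zemke naturality package is independent of these choices up to chain homotopy; (ii) pulling the handle decomposition back by $T$ carries the computation of $F^-_{W,\mathfrak{t}}$ to that of $F^-_{W,T_*\mathfrak{t}}$, while the boundary restrictions of $T$ produce the maps $\tau_\#$ and $\tau'_\#$ on the two ends. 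Your observation that each handle-level step intertwines exactly as in the earlier verification that $\eta$ is a chain map is the right way to see where the hypothesis on $T$ enters. Two small caveats: the lemma as stated does not require $Y, Y'$ to be rational homology spheres, so you should drop that assumption from the sketch (it plays no role in the argument); and the asserted ``$=$'' should be read as an equality of chain maps up to chain homotopy (equivalently, of the induced maps on homology), which is what your $\simeq$ produces --- it is worth flagging this explicitly since the lemma's notation could be misread as an on-the-nose equality of chain maps for fixed auxiliary data.
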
 

\begin{proof}[Proof of Theorem~\ref{thm:ConnKnot}]
Suppose that $K'\subset S^3$ is concordant to $K$, {\rm{i.e.}}  there
exists a smoothly embedded annulus $C\subset S^3\times [0,1]$ with
$\partial C= C \cap S^3\times [0,1]= K \times \{ 1\} \cup K'\times \{
0\}$. By taking the double branched cover $\Sigma(C)$ of $S^3\times
[0,1]$ branched along $C$ we get a smooth rational homology cobordism
from $\Sigma(K)$ to $\Sigma(K')$. By  adapting
\cite[Lemma~2.1]{grigsby2008knot} for $n=2$, we get that the
four-manifold $\Sigma(C)$ comes with a distinguished spin structure
$\mathfrak{t}$ restricting to the canonical spin structure on the two
ends.  In addition, this spin structure is
invariant under the covering involution of the double branched cover
$\Sigma (C)$.  Let $F_{C}^-\colon \CFm(\Sigma(K), \s_0) \to
\CFm(\Sigma(K'), \s _0)$ denote the cobordism map induced by $(\Sigma(C),
\mathfrak{t})$.

Since $\Sigma (C)$ is a rational homology cobordism, it follows that
\[
F_C^-\colon \CFm (\Sigma(K), \s_0)\to \CFm (\Sigma(K'), \s_0)
\] 
and 
\[
F_{-C}^-\colon \CFm (\Sigma(K'), \s_0)\to \CFm (\Sigma(K), \s_0) \ ,
\] 
are local equivalences. (Recall that according to \cite{OS24} a
rational homology cobordism induces an isomorphism on
$\HFinf=U^{-1}\HFm $.) The fact that $F_C^-$ and $F_{-C}^-$ homotopy
commute with the homotopy $\Z/2\Z$-actions follows from
Lemma~\ref{lem:naturalitylemma} and the fact that (in the notations of
that lemma) we have $T_*{\mathfrak{t}}={\mathfrak{t}}$. Then
Lemma~\ref{lem:algebra} concludes the argument.
\end{proof}

\begin{proof}[Proof of Theorem~\ref{thm:TauBars}]
Let $f \in {\rm{End}}_{\rm{loc}}(\CFm (\Sigma(K), \s_0),\tau_\#)$
be a maximal self-local equivalence.  As a consequence of
Lemma~\ref{lem:algebra}, the chain homotopy type of the mapping cone of the
restriction 
$\tau_\#^{\Imm \, f}+\id \colon \Imm \, f \to \Imm \, f$ is a
concordance invariant of $K$. On the other hand,
\[
H_*({\rm{Cone}}(\tau_\#^{\Imm \, f}+\id))= \F[U]_{\deltaunder (K)}
\oplus \F[U]_{\deltaover (K)+1} \oplus(\F[U]{\rm{-torsion}}),
\]
implying the claim.
\end{proof} 

Since $\HFm (\Sigma (K), \s _0)$ is of rank one (as an $\Field [U]$-module),
and $f_{{\rm {max}}}$ is a local equivalence, it follows that 
$\HFB_{\rm{conn}}(K)
\subset \HFm (\Sigma (K), \s _0)$ is also of rank one.
The $U$-torsion submodule of $\HFB_{\rm{conn}}(K)$
is the \defin{reduced connected Floer homology} of $K$, and it will be
denoted by  $\HFB_{\rm{red\kotojel conn}}(K)$.

The following simple adaptation of \cite[Proposition 4.6]{HHL} allows
us to prove triviality of $\HFB_{\rm{red\kotojel conn}}(K) $.

\begin{prop}
\label{prop:Condition}
  $\HFB_{\rm{red\kotojel conn}}(K)=0$ 
if and only if $\deltaover
  (K)=\deltaunder(K)=\delta(K)$. \qed
\end{prop}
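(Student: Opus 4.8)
The plan is to compute $\HFB_{\rm{conn}}(K)$ as the mapping cone of $\tau_\#^{\Imm\, f}+\id$ acting on $\Imm\, f_{\rm{max}}$, and read off $\deltaover, \deltaunder$ from it. By Lemma~\ref{lem:algebra} the $\iota$-complex $(\Imm\, f_{\rm{max}}, \tau_\#^{\Imm\, f})$ is locally equivalent to $(\CFm(\Sigma(K),\s_0),\tau_\#)$, so in particular $\HFB_{\rm{conn}}(K)=H_*(\Imm\, f_{\rm{max}})=\F[U]\oplus\HFB_{\rm{red\kotojel conn}}(K)$ as an $\F[U]$-module. First I would invoke the exact triangle~\eqref{eq:exact}, applied to the $\iota$-complex $\Imm\, f_{\rm{max}}$ in place of $\CFm(\Sigma(K),\s_0)$, which is legitimate because (by the discussion following Theorem~\ref{thm:TauBars}) $\Cone(\tau_\#^{\Imm\, f}+\id)$ has the same local equivalence type, hence the same $\deltaover,\deltaunder$, as $\CFB(K)$.

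The key observation is the following: if $\HFB_{\rm{red\kotojel conn}}(K)=0$, then $H_*(\Imm\, f_{\rm{max}})=\F[U]$, and a rank-one free $\F[U]$-module generated in a single degree admits essentially only one $U$-equivariant grading-preserving chain map up to homotopy; concretely, $\tau_\#^{\Imm\, f}$ must act as the identity on homology (it cannot be zero, since it is a local equivalence inducing an isomorphism after inverting $U$, and a grading-preserving map $\F[U]\to\F[U]$ that is an iso after localization is the identity). Then the horizontal map $\tau_*+\id$ in~\eqref{eq:exact} vanishes, the triangle splits, and $\HFB(K)$ (computed from $\Imm\, f_{\rm{max}}$) is two copies of $\F[U]$ with gradings differing by one — forcing $\deltaunder(K)=\deltaover(K)$, and this common value equals $\delta(K)$ because the free part of $\HFm(\Sigma(K),\s_0)$ sits in degree $\delta(K)$. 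Conversely, I would run the argument backwards using Proposition-style bookkeeping: if $\deltaover(K)=\deltaunder(K)$, the free part of $\HFB(K)$ is two copies of $\F[U]$ in adjacent gradings with no other free summand; tracing this back through the exact triangle~\eqref{eq:exact} applied to $\Imm\, f_{\rm{max}}$ shows the connecting map must be $U$-torsion-valued, but $\tau_\#^{\Imm\, f}+\id$ is a map between free modules, and a degree count (the two "extra" $\F[U]$ towers in $\HFB$ must be accounted for entirely by the kernel and cokernel of $\tau_\#^{\Imm\, f}+\id$) forces the torsion of $\Imm\, f_{\rm{max}}$ to vanish, i.e. $\HFB_{\rm{red\kotojel conn}}(K)=0$. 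This is exactly the mechanism of \cite[Proposition 4.6]{HHL}, transported verbatim.

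The main obstacle I anticipate is the bookkeeping in the converse direction: one must be careful that the equality $\deltaover(K)=\deltaunder(K)=\delta(K)$ genuinely pins down the structure of $\Imm\, f_{\rm{max}}$ and not merely of the larger complex $\CFm(\Sigma(K),\s_0)$, whose own reduced homology need not vanish. The point is that Lemma~\ref{lem:algebra}\eqref{due} lets us replace $\CFm(\Sigma(K),\s_0)$ by $\Imm\, f_{\rm{max}}$ without changing $\deltaover,\deltaunder$, and $H_*(\Imm\, f_{\rm{max}})$ has rank one with free part in degree $\delta(K)$; given this, a short exact-sequence chase of~\eqref{eq:exact} for $\Imm\, f_{\rm{max}}$ shows that the rank of $\HFB_{\rm{red\kotojel conn}}(K)$ as an $\F[U]/U^n$-summand count is governed precisely by $\deltaover(K)-\deltaunder(K)$ together with the torsion of the connecting homomorphism, and both contributions vanish exactly when $\deltaover(K)=\deltaunder(K)$. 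I would also double-check the grading conventions in~\eqref{eq:exact} (the shift by one in the $\F[U]_{(\underline{\delta}+1)}$ summand) so that the final equalities come out with the correct normalization relative to $\delta(K)$.
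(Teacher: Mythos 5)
Your forward direction is essentially sound: if $\HFB_{\rm{red\kotojel conn}}(K)=0$ then $\Imm\, f_{\rm{max}}$ is a finitely generated free $\F[U]$-complex with homology $\F[U]$, hence chain homotopy equivalent to $\F[U]$, and the only grading-preserving $U$-equivariant local automorphism of $\F[U]$ is the identity. So $\tau_\#^{\Imm\, f}\simeq\id$, the cone splits, and $\deltaover(K)=\deltaunder(K)=\delta(K)$. (One should phrase this at the chain level rather than saying it ``must act as the identity on homology''; but since $\Imm\, f$ is free that upgrade is automatic.)

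The converse is where I think you have a real gap. You assert that the two free towers of $\HFB$ being in adjacent degrees, ``together with the torsion of the connecting homomorphism,'' forces the torsion of $\Imm\, f_{\rm{max}}$ to vanish, with ``both contributions vanish exactly when $\deltaover(K)=\deltaunder(K)$.'' That second claim is unjustified, and in fact false at the level of pure degree-counting: the $U$-torsion in $H_*(\Imm\, f_{\rm{max}})$ does not enter the long exact sequence of the cone only through the degrees of its free towers. One can easily write down an $\iota$-complex $(C,\iota)$ with $H_*(C)=\F[U]_{(\delta)}\oplus T$, $T\neq 0$, and with $\iota_*$ acting as the identity on the free generator but nontrivially on $T$, for which $\ker(\iota_*+\id)$ and $\mathrm{coker}(\iota_*+\id)$ both have free part in degree $\delta$, so $\deltaover=\deltaunder=\delta$ while $T\neq 0$. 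No exact-sequence chase of the cone alone rules this out. What rules it out is \emph{maximality} of $f_{\rm{max}}$, used structurally rather than merely to replace $\CFm$ by $\Imm\, f_{\rm{max}}$: in such a situation the projection onto the free summand would be a self-local equivalence commuting with $\iota$ and having strictly larger kernel, contradicting maximality. The cleaner route (and, I believe, the one underlying \cite[Proposition~4.6]{HHL}, which the paper invokes verbatim) is to show that $\deltaover(K)=\deltaunder(K)=\delta(K)$ is equivalent to the existence of grading-preserving local maps in both directions between $(\CFm(\Sigma(K),\s_0),\tau_\#)$ and the trivial $\iota$-complex $(\F[U]_{(\delta)},\id)$ — the map $C\to\F[U]$ always exists, while the map $\F[U]\to C$ exists precisely when one can choose a generator of the free tower fixed by $\iota_*$, which is exactly the condition $\deltaunder=\delta$ — and then invoke Lemma~\ref{lem:algebra}\eqref{quattro}: local equivalence invariance of the connected complex immediately gives $\HFB_{\rm{conn}}(K)\cong\F[U]$. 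Your argument never constructs the local map $\F[U]\to C$, and that is the missing ingredient.
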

Given a knot $K\subset S^3$  we denote by $-K$ its mirror image.

\begin{lem}
\label{lem:mirror}
For a knot $K\subset S^3$ we have that $\deltaunder(K)=-\deltaover(-K)$
\end{lem}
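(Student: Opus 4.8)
The plan is to relate the covering involution data for $-K$ to that for $K$ via the standard orientation-reversal correspondence on double branched covers, and then read off the effect on the two numerical invariants $\deltaover$ and $\deltaunder$ from the mapping cone exact triangle \eqref{eq:exact}. The starting point is the diffeomorphism $\Sigma(-K)\cong -\Sigma(K)$ (orientation reversal), which is equivariant with respect to the covering involutions: if $\inv\colon\Sigma(K)\to\Sigma(K)$ is the covering involution for $K$, then the same underlying map is the covering involution for $-K$ once we reverse the orientation of the total space. Consequently, at the level of $\iota$-complexes, the pair $(\CFm(\Sigma(-K),\s_0),\inv_\#)$ is (locally equivalent, hence for our purposes equivalent) to the ``dual'' of $(\CFm(\Sigma(K),\s_0),\inv_\#)$. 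Concretely, orientation reversal sends $\CFm(Y,\s)$ to (a suitable completion of) the dual complex of $\CFm(-Y,\s)$, with gradings negated up to an overall shift; this is the usual duality $\HFm(-Y,\s)\cong HF^+_{-*}(Y,\s)$ packaged on the chain level, and it intertwines $\inv_\#$ on one side with the dual map $\inv_\#^\vee$ on the other.

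First I would set up this duality carefully at the level of $\iota$-complexes: define the dual $\iota$-complex $(C^\vee,\iota^\vee)$ and check that $(\CFm(\Sigma(-K),\s_0),\inv_\#)$ is chain homotopy equivalent (or at least locally equivalent) to $(\CFm(\Sigma(K),\s_0),\inv_\#)^\vee$, with the grading convention that a generator in degree $d$ dualizes to degree $-d$ plus the fixed shift coming from the duality pairing on $\HFinf$. Second, I would compute how the mapping cone $\Cone(\inv_\#+\id)$ behaves under dualization: since $Q$ has degree $-1$ and $\partial_{\rm cone}=\partial+Q(\inv_\#+\id)$, the dual of the cone of $\inv_\#+\id$ on $C$ is, up to a degree shift, the cone of $\inv_\#^\vee+\id$ on $C^\vee$. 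Third, I would extract the invariants: from the $\F[U]$-module decomposition $\HFB(K)=\F[U]_{(\deltaover(K))}\oplus\F[U]_{(\deltaunder(K)+1)}\oplus(\F[U]\text{-torsion})$, the two free towers have bottom gradings $\deltaover(K)$ and $\deltaunder(K)+1$. Dualizing sends the tower of bottom grading $a$ to a tower whose bottom grading is $-a$ (modulo the overall normalization), and—crucially—interchanges the roles of the ``over'' and ``under'' tower, because the duality on a mapping cone swaps the two filtration levels $C$ and $C[-1]$. Tracking the bookkeeping gives $\deltaover(-K)=-\deltaunder(K)$, which is the asserted identity $\deltaunder(K)=-\deltaover(-K)$.

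The main obstacle I anticipate is pinning down the precise grading shift and the sign/index bookkeeping in the duality statement, so that the ``over/under'' labels get swapped correctly rather than preserved. One has to be careful that $\deltaunder(K)\le\delta(K)\le\deltaover(K)$ is compatible with $\delta(-K)=-\delta(K)$ (the Manolescu--Owens invariant changes sign under mirroring), which forces the swap $\deltaover\leftrightarrow-\deltaunder$ rather than $\deltaover\mapsto-\deltaover$; this is a good consistency check on the computation. A secondary technical point is that $\CFm$ is not literally self-dual (one works with $\CFm$ versus $\CF^+$, or with completed complexes), so I would either phrase the duality using $\CF^+$ of the mirror, or invoke the standard completed-complex formalism (as in the local-equivalence literature) where $\CFm(-Y,\s)$ is genuinely the $\F[U]$-dual of $\CFm(Y,\s)$ in the appropriate category; the naturality of $\inv_\#$ under the orientation-reversing diffeomorphism, which follows from the functoriality already used in Lemma~\ref{lem:naturalitylemma}, then guarantees that the dual map is again the covering involution's induced map, so all the $\iota$-complex structure is carried along.
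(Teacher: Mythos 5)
Your proposal is correct and is essentially the paper's own approach: the paper observes $\Sigma(-K)\cong -\Sigma(K)$ and then invokes the argument of Proposition~5.2 of Hendricks--Manolescu, which is precisely the duality-of-$\iota$-complexes computation (swapping the two towers of the mapping cone under orientation reversal, with the grading shift you describe) that you spell out. Your consistency check against $\delta(-K)=-\delta(K)$ and the remark about phrasing duality via $\CF^+$ or completed complexes are exactly the bookkeeping points that the cited HM argument handles.
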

\begin{proof} 
The double branched cover of $-K$ is $-\Sigma(K)$. The
argument of \cite[Proposition 5.2]{HM} provides the
claimed identity.
\end{proof}

\begin{lem}
\label{lem:Sums}
If  $K=K_1\# K_2$ for two knots $K_1$ and $K_2\subset S^3$ then 
\begin{equation}\label{ineq}
\deltaunder(K_1)+ \deltaunder(K_2) \leq \deltaunder(K) \leq \deltaover(K) \leq \deltaover(K_1) + \deltaover(K_2).
\end{equation}
\end{lem}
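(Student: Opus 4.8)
The plan is to exhibit explicit local equivalences realizing the inequalities by using the Künneth-type behavior of the Heegaard Floer chain complex under connected sum, together with the fact that $\Sigma(K_1\# K_2)=\Sigma(K_1)\#\Sigma(K_2)$ as $\Z/2\Z$-equivariant three-manifolds (the covering involution on a connected sum is the connected sum of the covering involutions, performed equivariantly at a pair of fixed points). Concretely, there is a chain homotopy equivalence $\CFm(\Sigma(K_1\# K_2),\s_0)\simeq \CFm(\Sigma(K_1),\s_0)\otimes_{\F[U]}\CFm(\Sigma(K_2),\s_0)$ which intertwines $\inv_\#$ for $K_1\# K_2$ with $\inv_\#\otimes \inv_\#$ (up to homotopy); this is the analogue of the connected-sum formula for the involutive theory, and it follows from the naturality statement in Lemma~\ref{lem:naturalitylemma} applied to the connected-sum cobordism carrying the product self-diffeomorphism. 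I would first record this equivariant Künneth isomorphism as the key input.

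Next I would analyze the two halves of the inequality separately. For the left inequality $\deltaunder(K_1)+\deltaunder(K_2)\le \deltaunder(K)$: take maximal self-local equivalences $f_i\colon \CFm(\Sigma(K_i),\s_0)\to \CFm(\Sigma(K_i),\s_0)$ and form $f_1\otimes f_2$, which is a self-local equivalence of the tensor product (hence of $\CFm(\Sigma(K),\s_0)$) commuting with $\inv_\#$ up to homotopy. By Lemma~\ref{lem:algebra}, extending $f_1\otimes f_2$ to a maximal self-local equivalence only enlarges the image, and $\deltaunder$ is read off from the mapping cone of $\inv_\#+\id$ restricted to the image via the formula $H_*(\Cone(\inv_\#^{\Imm f}+\id))=\F[U]_{(\deltaunder)}\oplus \F[U]_{(\deltaover+1)}\oplus(U\text{-torsion})$ appearing in the proof of Theorem~\ref{thm:TauBars}. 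The point is that $\Imm(f_1\otimes f_2)=\Imm f_1\otimes \Imm f_2$, so $\deltaunder$ of the product complex is the bottom grading of a nonzero class in $H_*(\Cone(\inv_\#^{\Imm f_1}\otimes \inv_\#^{\Imm f_2}+\id))$, and a direct algebraic computation with the two-step filtration on this double mapping cone shows this bottom grading is at least $\deltaunder(K_1)+\deltaunder(K_2)$ (one produces an explicit cycle of that grading using bottom-grading cycles in each factor). The right inequality follows from the left one applied to mirrors together with Lemma~\ref{lem:mirror}: $\deltaover(K)=-\deltaunder(-K)=-\deltaunder(-K_1\#-K_2)\le -(\deltaunder(-K_1)+\deltaunder(-K_2))=\deltaover(K_1)+\deltaover(K_2)$, and the middle inequality $\deltaunder(K)\le\deltaover(K)$ is already known from Section~\ref{sec:definition}.

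The main obstacle I expect is establishing the equivariant connected-sum formula $\CFm(\Sigma(K_1\# K_2),\s_0)\simeq \CFm(\Sigma(K_1),\s_0)\otimes_{\F[U]}\CFm(\Sigma(K_2),\s_0)$ as $\iota$-complexes, i.e.\ checking that the standard Künneth chain homotopy equivalence can be chosen to intertwine the covering involutions up to homotopy. One has to be careful that the involution on $\CFm(\Sigma(K_i),\s_0)$ is only defined up to homotopy (it involves a choice of almost-complex structure interpolation), so the intertwining is a homotopy-commutative square rather than a strict one; verifying this cleanly requires either invoking the naturality of cobordism maps under equivariant diffeomorphisms (Lemma~\ref{lem:naturalitylemma}) applied to the pair-of-pants cobordism, or citing the corresponding result for involutive Heegaard Floer homology and observing that the covering involution behaves formally like the involutive $\iota$. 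Once this equivariant Künneth statement is in hand, the rest is the same bookkeeping with mapping cones and gradings that appears in \cite{HHL} and in the proof of Theorem~\ref{thm:TauBars}, so I would allot most of the writing to that one lemma and treat the grading estimates as routine.
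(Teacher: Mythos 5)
Your overall strategy coincides with the paper's: reduce to the statement that the covering involution on $\Sigma(K_1\# K_2)$ is chain-homotopic to the tensor product of the covering involutions on the two factors, and then deduce the inequality by algebra on $\iota$-complexes. The paper establishes the tensor-product behaviour at the source — it takes a connected sum of doubly pointed Heegaard diagrams $D_1\# D_2$ for $K_1\# K_2$, observes that $\CFm(\DD_{D_1\#D_2})=\CFm(\DD_{D_1})\otimes \CFm(\DD_{D_2})$, and notes that the map $\eta$ of Equation~\eqref{eq:eta} manifestly factors as $\eta_{D_1}\otimes\eta_{D_2}$, hence $\inv_\#\simeq \inv_{\#,1}\otimes \inv_{\#,2}$. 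Your proposal instead invokes Lemma~\ref{lem:naturalitylemma} on a pair-of-pants cobordism; this is not quite adequate as stated, because that lemma controls the interaction of $\inv_\#$ with cobordism maps, while the K\"unneth chain homotopy equivalence $\CFm(\Sigma(K_1)\#\Sigma(K_2))\simeq \CFm(\Sigma(K_1))\otimes\CFm(\Sigma(K_2))$ is not itself a cobordism map but a diagram-level identification (as in Ozsv\'ath--Szab\'o's connected-sum argument). The Heegaard-diagram route is both cleaner and actually yields the equivariance you need.

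On the algebraic side the paper simply cites \cite[Proposition~1.3]{HMZ}, which is exactly the statement that $\underline d, \overline d$ are super-/sub-additive under tensor product of $\iota$-complexes; you attempt to reprove this by tensoring maximal self-local equivalences. Two caveats: the phrase ``extending $f_1\otimes f_2$ to a maximal self-local equivalence only enlarges the image'' is backwards (maximal kernel means minimal image), and — more importantly — the grading estimate you defer as ``routine'' is precisely the content of the cited proposition. In particular, one cannot simply tensor bottom-grading cycles, because $(\iota_1\otimes\iota_2)(x_1\otimes x_2)=x_1\otimes x_2$ fails already at the cocycle level when each $\iota_i(x_i)$ differs from $x_i$ by a nontrivial boundary; the cross terms need the two-step filtration argument carried out in \cite{HMZ}. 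Your observation that the right-hand inequality follows from the left-hand one via Lemma~\ref{lem:mirror} is a nice shortcut the paper does not spell out, but since the cited proposition delivers both inequalities at once, the paper has no need for it.
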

\begin{proof} 
Suppose that $D _i$ is a doubly pointed Heegaard diagram for
$K_i\subset S^3$ ($i=1,2$). Then a doubly pointed Heegaard diagram $D$
can be constructed for $K$ by taking the connected sums of $D_i$
(along $w_2^1$ in $D_1$ and $w_2^2$ in $D _2$).  This construction
shows that $\CFm (\DD _D)$ is the tensor product of $\CFm (\DD
_{D_1})$ and $\CFm (\DD _{D_2})$. It then obviously follows that the
map $\eta _D$ of Equation~\eqref{eq:eta} for $\DD _D$ is the tensor
product of the similar maps $\eta _{D_1}$ and $\eta _{D_2}$ for $\DD
_{D_1}$ and $\DD _{D_2}$. This implies that $\tau_{D}$ and $\tau_{D_1}
\otimes \tau_{D_2}$ are chain homotopic maps, from which
\cite[Proposition 1.3]{HMZ} implies the result.
\end{proof}

\section{Vanishing results}
\label{sec:Vanishing} 
In some cases $\HFB (K)$ and $\HFB_{\rm{conn}}(K)$ can be easily
determined. As customary in Heegaard Floer theory, these invariants do
not capture any new information for alternating (or, more generally
quasi-alternating) knots. It is a more surprising (and as we
will see, very convenient) feature of $\HFB_{\rm{conn}}$ that it is
rather trivial for torus knots as well. In this section we show some
vanishing results about the group $\HFB_{\rm{conn}}(K)$, while the next section
provides methods to determine our invariants for Montesinos (and more generally
for arborescent) knots. We start with a simple motivating example.

\subsection{An example}
\label{ssec:example}
Consider the Brieskorn sphere $\Sigma (2,3,7)$; it can be given as
$(-1)$-surgery on the right-handed trefoil knot $T_{2,3}$. It is an
integral homology sphere with Heegaard Floer homology $\HFa (\Sigma
(2,3,7))=\Field ^2_{(0)}\oplus \Field _{(-1)}$ and $\HFm (\Sigma
(2,3,7))=\Field [U]_{(-2)}\oplus \Field _{(-2)}$ in its unique
spin$^c$ (hence spin) structure,
see~\cite[Equation~(25)]{ozsvath2003absolutely}.

This three-manifold can be presented as the double branched cover of
$S^3$ either along the torus knot $T_{3,7}$, or along the pretzel knot
$P(2, -3, -7)$. The two presentations provide
two involutions on $\Sigma (2,3,7)$, which potentially provide two
different maps on the Heegaard Floer chain complex. Indeed, let $\phi _1$
denote the involution $\Sigma (2,3,7)$ admits as double branched cover
along $T_{3,7}$ and let $\phi _2$ denote the involution it gets as
double branched cover along $P(2,-3,-7)$. Through direct calculation,
the actions of these maps on Heegaard Floer homology has been
identified in \cite[Propositions~6.26~and~6.27]{HLS}.

\begin{thm}[\cite{HLS}]
The map $(\phi _1)_*$ induces the identity map on $\HFa (\Sigma
(2,3,7))$ (and hence on $\HFm (\Sigma (2,3,7))$, while the map $(\phi
_2)_*$ is different from the identity on $\HFa (\Sigma (2,3,7))$. \qed
\end{thm}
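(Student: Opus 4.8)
The plan is to reduce the statement to a dichotomy and then resolve the two cases using, respectively, the torus-knot vanishing theorem and a lattice-homology computation. Both $(\phi_1)_*$ and $(\phi_2)_*$ are $U$-equivariant, grading-preserving automorphisms of $\HFm(\Sigma(2,3,7),\s_0)=\F[U]_{(-2)}\oplus\F_{(-2)}$ squaring to the identity. Passing to a minimal $\F[U]$-model $C$ of $\CFm(\Sigma(2,3,7),\s_0)$ --- with generators $a,b,c$, $\partial a=\partial b=0$, $\partial c=Ub$, and with $[a]$ spanning the tower and $[b]$ the $U$-torsion --- one checks that every grading-preserving, $U$-equivariant homotopy involution of $C$ is chain homotopic either to the identity $\iota_0$, or to the transvection $\iota_1$ fixing $b$ and $c$ and sending $a\mapsto a+b$. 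Hence, for each of the two knots, $(\CFm(\Sigma(K),\s_0),\inv_\#)$ is chain homotopy equivalent to $(C,\iota_0)$ or to $(C,\iota_1)$, and $(\phi_i)_*$ is the identity on $\HFa(\Sigma(2,3,7))$ --- equivalently on $\HFm$, as the same part of $\CFm$ detects both --- exactly in the first case. A further short computation with $C$ separates these: $(C,\iota_0)$ admits a maximal self-local equivalence with image $\F[U]\langle a\rangle$, so its reduced connected homology vanishes, whereas every self-local equivalence of $(C,\iota_1)$ is an isomorphism, so the reduced connected homology of $(C,\iota_1)$ is $\F_{(-2)}\neq 0$. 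By Lemma~\ref{lem:algebra} and Proposition~\ref{prop:Condition} the theorem therefore reduces to the two assertions $\HFB_{\rm{red\kotojel conn}}(T_{3,7})=0$ and $\HFB_{\rm{red\kotojel conn}}(P(2,-3,-7))\neq 0$.

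The first assertion is the case $(p,q)=(3,7)$ of Theorem~\ref{thm:TorusVanishing}, whence $(\phi_1)_*=\id$. For the second I would run the lattice-homology computation of Section~\ref{sec:Montesinos}: $\Sigma(2,3,7)$ is a Seifert fibered homology sphere bounding a negative-definite almost-rational plumbing, so N\'emethi's lattice homology computes its Heegaard Floer homology, and through the identification of $\inv_\#$ with the lattice-homological conjugation of \cite{DM,HKL} also the involution; the outcome is $\HFB_{\rm{red\kotojel conn}}(P(2,-3,-7))\neq 0$, so $(\phi_2)_*$ is the transvection $\iota_1$ on homology and in particular is not the identity. A self-contained alternative --- the route actually taken in \cite{HLS} --- is to evaluate $\inv_\#=\Phi_{J_{s,t}}\circ\eta$ directly on equivariant Heegaard diagrams: for $\phi_1$, lift a doubly pointed genus-one diagram of the $(1,1)$-knot $T_{3,7}$ to a genus-two diagram of $\Sigma(2,3,7)$ carrying the lifted covering involution $\inv$, normalise it by an equivariant isotopy so that $\inv$ literally preserves the diagram, and count holomorphic disks for an $\inv$-equivariant path of almost complex structures; for $\phi_2$ one proceeds in the same way from a diagram adapted to a surgery or Montesinos presentation of $\Sigma(2,3,7)$.

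I expect the genuine difficulty to lie in this last, hands-on computation: isolating the $\eta$-part and the continuation-map part of $\inv_\#$ on the lifted diagrams and controlling the resulting disk counts and chain homotopies --- which is why distinguishing the two involutions is not a formality despite $\HFa(\Sigma(2,3,7))$ having rank only three. The reduction above trades this analytic input for the torus-knot argument of Theorem~\ref{thm:TorusVanishing} and for the identification of $\inv_\#$ with the conjugation on lattice homology, each of which carries its own technical weight.
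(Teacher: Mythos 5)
The paper offers no proof of this statement: it is cited from \cite[Propositions~6.26 and 6.27]{HLS}, where the two involutions are computed directly on equivariant Heegaard diagrams --- essentially the ``self-contained alternative'' you sketch in your final paragraph, and the route the paper actually relies on. Your main argument instead inverts the paper's exposition: you take as input Proposition~\ref{prop:TorusOdd} (which suffices here since $3\cdot 7$ is odd, and is proved by elementary complex geometry of the Brieskorn link) together with Lemma~\ref{lem:CalcForPretzel} (proved via lattice homology), and deduce the HLS statement from them, whereas the paper states the HLS result first in Subsection~\ref{ssec:example} and only later remarks that it is ``reproved'' by Subsection~\ref{subsec:Torus} and Section~\ref{sec:IndepResults}. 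This reversal is logically sound --- none of the cited ingredients uses the HLS theorem --- and the algebraic glue you supply is correct and worth isolating: on the rank-$3$ minimal $\F[U]$-model $C$ of $\CFm(\Sigma(2,3,7),\s_0)$ there are exactly two chain homotopy classes of grading-preserving, $U$-equivariant homotopy involutions (the identity and the transvection, which is the $\iota_0$ of the model $\partial c = Ua+Ub$ appearing before Lemma~\ref{lem:CalcForPretzel} after a change of basis), and these are separated by $\HFB_{\rm{red\kotojel conn}}$. What each route buys: yours stays entirely inside the machinery the paper itself develops, at the price of leaning on theorems proved several sections later; citing \cite{HLS} keeps the logical order flat but imports an external Heegaard-diagram computation. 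One small streamlining: once the dichotomy is in place, Proposition~\ref{prop:Condition} is not really needed --- the argument is carried by your direct check that the two homotopy classes of involutions on $C$ yield different reduced connected homology.
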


This allows us to compute the invariants $\HFB $ and $\HFB_{{\rm
    {conn}}}$ for $T_{3,7}$ and $P(2,-3,-7)$, showing that
\begin{itemize}
\item $\HFB (T_{3,7})=\Field [U]_{(-2)}\oplus \Field [U] _{(-3)}\oplus
  \Field _{(-2)}\oplus \Field _{(-3)}$, or, as an $\Field
         [U,Q]/(Q^2)$-module (and ignoring gradings) $\HFB (T_{3,7})=(\Field
         [U,Q]/(Q^2))\oplus \Field ^2$,
\item $\HFB _{{\rm {conn}}}(T_{3,7})=\Field [U]_{(-2)}$, hence
  $\HFB_{\rm{red\kotojel conn}}(T_{3,7})=0$; and
\item $\HFB (P(2,-3,-7))=(\Field [U,Q]/(Q^2)) \oplus \Field $, 
\item $\HFB _{{\rm {conn}}} (P(2,-3,-7))=\Field [U]_{(-2)}\oplus
  \Field _{(-2)}$, hence $\HFB_{\rm{red\kotojel
      conn}}(P(2,-3,-7))=\Field _{(-2)}\neq 0$.
\end{itemize}

These calculations generalize to show that any torus knot has trivial
reduced connected Floer homology $\HFB_{\rm{red\kotojel
    conn}}$, while for pretzel knots there is a combinatorial method
to determine this quantity. In particular, 
the above results will be reproved in Subsection~\ref{subsec:Torus}
and in Section~\ref{sec:IndepResults}.

\subsection{Quasi-alternating knots}

\begin{proof}[Proof of Theoem~\ref{thm:AltAndQuasiAlt}]
If $K$ is an alternating (or more generally, quasi-alternating) knot,
then the double branched cover $\Sigma (K)$ is an $L$-space, and hence
$\HFm (\Sigma (K), \s _0)=\Field [U]$; in particular the homology is
only in even degrees.  Results of \cite{DM} imply that $\inv _\#$ is
determined (up to homotopy) by its action on homology, which (for a
grading preserving map) for $\Field [U]$ must be equal to the
identity.  Therefore $\inv _{\#}$ is homotopic to the identity, and so
$\inv _{\#}+\id =0$, hence the exact triangle of
Equation~\eqref{eq:exact} determines $\HFB (K)$ as the sum of two
copies of $\HFm (\Sigma (K), \s _0)$ (one with shifted grading).
Furthermore, since the homotopy commuting assumption of a self-local
equivalence in this case is vacuous, we get that
$\HFB_{\rm{conn}}(K)=\Field [U] (=\HFm (\Sigma (K), \s _0))$,
$\HFB_{{\rm{red\kotojel conn}}}(K)=0$, and the only invariant we get
from this picture is the $d$-invariant of $(\Sigma (K), \s _0)$, which
is (half of) the Manolescu-Owens invariant of the knot $K$ from
\cite{ManOwens}.
\end{proof}

\subsection{Torus knots}
\label{subsec:Torus}
Next we turn to the discussion of Theorem~\ref{thm:TorusVanishing}.
The proof of this result is significantly easier 
when $pq$ is odd.

\begin{prop}
\label{prop:TorusOdd}
Suppose that $pq$ is odd. Then the covering involution $\inv$
on the double branched cover $\Sigma (T_{p,q})$ is isotopic to $\id$. 
\end{prop}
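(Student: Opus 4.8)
The plan is to exploit the classical fact that a torus knot $T_{p,q}$ with $pq$ odd is \emph{invertible}, and moreover that this invertibility is realized by an ambient isotopy of $S^3$ that is compatible with the Seifert-fibered structure. Concretely, $T_{p,q}$ sits on the standard torus $T\subset S^3$ as a $(p,q)$-curve, and the rotation of $S^3$ by $\pi$ about a suitable axis restricts to an orientation-preserving involution $r\colon S^3\to S^3$ that preserves $T$, reverses the orientation of $T_{p,q}$, and has fixed-point set a circle meeting $T_{p,q}$ in exactly two points. The key observation is that $r$ is isotopic to the identity of $S^3$ (every orientation-preserving diffeomorphism of $S^3$ is), and this isotopy can be chosen so that at each time it maps $T_{p,q}$ to a knot isotopic to $T_{p,q}$; equivalently, $r$ together with the obvious strong inversion data exhibits $\inv$ on $\Sigma(T_{p,q})$ as a symmetry that is \emph{conjugate into the deck group picture} in a controlled way.

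First I would recall that when $pq$ is odd, $\Sigma(T_{p,q})$ is the Brieskorn sphere $\Sigma(2,p,q)$, a Seifert-fibered integral homology sphere; the covering involution $\inv$ corresponds, under the $2$-fold branched cover $S^3=\Sigma(2,p,q)/\!\!\sim$ branched along the singular fiber of order $2$, to the deck transformation. Next I would use the standard structure theory of $\Sigma(2,p,q)$: its orientation-preserving mapping class group is finite, and in fact (by Boileau–Otal, or by the fact that small Seifert-fibered spaces have mapping class group detected on the base orbifold) every finite-order orientation-preserving diffeomorphism is conjugate to a fiber-preserving one, and isotopic diffeomorphisms in this setting are genuinely isotopic. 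The point is to identify $\inv$ concretely as rotation by $\pi$ in the $S^1$-fiber direction over the order-$2$ cone point — more precisely, as the involution coming from the $2$-fold branched cover description — and then exhibit an explicit isotopy to the identity. Since $pq$ odd forces the branch locus (the order-$2$ singular fiber) to be a single regular-looking fiber once we pass to $S^3$, the involution $\inv$ acts as a $\pi$-rotation that can be "untwisted" through the Seifert fibration: concatenating the fiberwise rotation with a loop in $SO(3)$ (or using that $\pi_1(SO(3))=\Z/2$ together with the homology-sphere hypothesis) yields the required nullisotopy.

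The cleanest route, which I would actually write up, is via the branch-locus side: because $pq$ is odd, $T_{p,q}$ is a \emph{strongly invertible} knot whose inversion is realized so that the two fixed points of the strong inversion lie on $\widetilde K$ — this is the hypothesis that lets the strong inversion of $S^3$ lift to an isotopy between $\inv$ and $\id$ on the cover. Specifically, I would invoke the fact (going back to Schreier, and used e.g.\ in work on symmetries of torus knots) that for $pq$ odd there is an orientation-preserving involution $\rho$ of the pair $(S^3,T_{p,q})$ with one-dimensional fixed set, and such that $S^3/\rho \cong S^3$ with the image of $T_{p,q}$ again being (the trivial knot or something isotopic to) a curve for which the branched cover recovers $\Sigma(T_{p,q})$; composing the branched-covering deck involution with the lift of $\rho$ gives a diffeomorphism of $\Sigma(T_{p,q})$ isotopic to the identity by construction, and one checks this composite \emph{is} $\inv$ up to isotopy.

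The main obstacle — and the place I would spend the most care — is the passage from "$\inv$ is conjugate, via a diffeomorphism of $\Sigma(T_{p,q})$, to something visibly isotopic to $\id$" to "$\inv$ itself is isotopic to $\id$". This is not automatic: isotopy is not a conjugation-invariant notion for a single diffeomorphism unless one knows the conjugating element can be absorbed. Here this is saved by the rigidity of Seifert-fibered homology spheres: $\mathrm{MCG}(\Sigma(2,p,q))$ is finite and, for these small Seifert spaces, the orientation-preserving part is generated by fiber-preserving symmetries whose isotopy classes are detected by their action on the base orbifold $S^2(2,p,q)$. Since $\inv$ fixes each cone point and acts trivially on the (rational) homology, its class in this finite group must be trivial, hence $\inv\simeq\id$. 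I would therefore structure the write-up as: (1) identify $\Sigma(T_{p,q})=\Sigma(2,p,q)$ and $\inv$ with the order-$2$ branched-cover deck map of the Seifert fibration; (2) quote finiteness and base-orbifold-detection of $\mathrm{MCG}$ for small Seifert-fibered homology spheres; (3) conclude $\inv$ is isotopic to the identity because it acts trivially on the base orbifold and on homology. The remaining steps are routine verifications of the Seifert data.
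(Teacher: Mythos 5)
Your approach is genuinely different from the paper's, and it has a real gap. The paper's proof is a two-line explicit computation: it identifies $\Sigma(T_{p,q})$ with the Brieskorn sphere $\Sigma(2,p,q)\subset\C^3$, the link of $z^2=x^p+y^q$, observes that the covering involution is $\tau(z,x,y)=(-z,x,y)$, and then notes that the circle of diffeomorphisms $(z,x,y)\mapsto(t^{pq}z,\, t^{2q}x,\, t^{2p}y)$ (for $t\in S^1$) satisfies $t=1\Rightarrow\id$ and $t=-1\Rightarrow\tau$ precisely because $pq$ is odd. This exhibits the isotopy directly, with no appeal to mapping-class-group machinery.

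The gap in your version is the conclusion of step (3): you claim that because $\inv$ fixes the cone points of the base orbifold $S^2(2,p,q)$ and acts trivially on homology, its class in the finite $\MCG(\Sigma(2,p,q))$ must vanish. That criterion proves too much. The same three-manifold $\Sigma(2,3,7)$ is also the double branched cover of $S^3$ over the pretzel knot $P(2,-3,-7)$, and the corresponding covering involution $\phi_2$ likewise fixes all three singular fibers (they have pairwise distinct multiplicities $2,3,7$, so any diffeomorphism preserves each one) and acts trivially on the homology of this integral homology sphere. Yet $\phi_2$ is \emph{not} isotopic to the identity: by \cite[Propositions 6.26--6.27]{HLS}, quoted in Section~\ref{ssec:example} of this paper, it induces a nontrivial map on $\HFa(\Sigma(2,3,7))$. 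So ``fixes cone points and is homologically trivial'' does not detect triviality in $\MCG$. What the base-orbifold criterion omits is the action on the orientation of the Seifert fibers: the relevant detection statement for these small Seifert homology spheres must track whether a fiber-preserving diffeomorphism preserves or reverses fiber orientation. For $T_{p,q}$ the covering involution lies in the circle action (hence preserves fiber orientation), while for $P(2,-3,-7)$ it reverses it; but verifying the fiber-orientation claim for $T_{p,q}$ is essentially the content of the paper's explicit computation, so at that point you might as well just give the one-line $S^1$-family argument. Finally, the strong-inversion reasoning in your third paragraph does not close as written: showing that $\tau$ composed with a lift of a strong inversion is isotopic to $\id$ only tells you that $\tau$ is isotopic to the inverse of that lift, not to $\id$, unless you separately know that the lift is isotopically trivial.
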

\begin{proof}
The double branched cover of the torus knot $T_{p,q}$ is diffeomorphic
to the link of the complex surface singularity given by the  equation
$z^2=x^p+y^q$, which is the  Brieskorn sphere $\Sigma(2,p,q)$.  The 
covering involution $\tau\colon \Sigma(2,p,q) \to \Sigma(2,p,q)$ of
$T_{p,q}$ acts as $(z,x,y) \mapsto (-z,x,y)$.  

Fix $t\in S^1$ and consider the  diffeomorphism
\[
(z,x,y)\mapsto (t^{pq}z, t^{2q}x, t^{2p}y).
\]
Clearly we get an $S^1$-family of diffeomorphisms, where $t=1$
gives $\id$, while $t=-1$ (under the condition  $pq$ odd) gives 
$\inv$, concluding the proof of the proposition.
\end{proof} 

Next we turn to  the  case when exactly one of $p$ and $q$ is
even. The proof in this case requires some preparation from lattice homology.
(For a more thorough introduction to this subject see 
\cite{Nem3, OSSKnotLattice}.)

\subsection{Lattice homology}
\label{ssec:lattice}
In determining the $\iota$-complex associated to the double branched
cover of a torus knot $T_{p,q}$, the concept of lattice homology will
be extremally useful. This theory was motivated by computational
results of Ozsv\'ath and Szab\'o (for manifolds given by negative
definite plumbing trees of at most one 'bad' vertex) in \cite{OS20}
and extended by N\'emethi~\cite{Nem3} to any negative definite
plumbing trees. The isomorphism of lattice homology with Heegaard
Floer homology was established for almost-rational graphs by N\'emethi
in \cite{Nem3}, which was extended in \cite{OSSspectral} to graphs
with at most two 'bad'vertices. Below we recall the basic concepts and
results of this theory.

A \defin{graded root} is a pair $(R,w)$ where
\begin{itemize}
\item $R$ is a directed, infinite tree with a finite number of leaves
  and a unique end modelled on the infinite stem \xymatrix{ \bullet
    \ar[r] &\bullet \ar[r] & \bullet \ar[r] &\cdots}, subject to the
  condition that every vertex has one and only one successor (see
  Figure \ref{fig:GradedRoot} for an example),
\begin{figure}[t]
\includegraphics[width=0.25\textwidth]{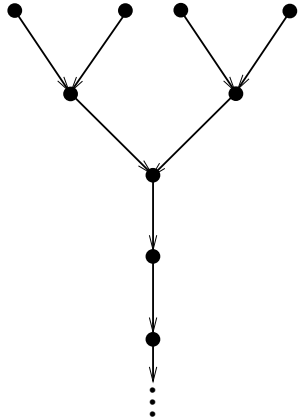}
\caption{A graded root}
\label{fig:GradedRoot} 
\end{figure}
\item $w\colon R \to \Q$ is a function associating to each vertex $x$
  of $R$ a rational number $w(x)\in \Q$ such that $w(x_2)=w(x_1)-2$ if
  $(x_1, x_2)$ is an edge of $R$.
\end{itemize}
A graded root $(R,w)$ specifies a graded $\Field[U]$-module
$\mathbb{H}^-(R,w)$ as follows: As a graded vector space,
$\mathbb{H}^-(R,w)$ is generated over $\F$ by the vertices of $R$,
graded so that $\gr(x)=w(x)$ for all $x\in R$. Multiplication by $U$
is defined on the set of generators by saying that $U \cdot x =y$ if
for the vertex $x$ of $R$ the vertex $y$ is its successor.

Manolescu and Dai showed in \cite{DM} that the lattice homology
$\mathbb{H}^{-}(R,w_{k})$ corresponding to the graded root $(R, w_k)$
can be represented as the homology of a \emph{model complex} $C(R)$,
which is defined as follows.  Let $C(R)$ be generated (as an
$\mathbb{F}[U]$-module), by
\begin{itemize}
\item the leaves  $\{ v_{l} \}$ of $R$ (with grading  $w(v_{l})$), called
the \emph{even generators},  and 
\item by the \emph{odd generators}
 defined as follows: for a vertex $a$ of 
  $R$ with valency greater than 2, let $V=\{v_{1},\cdots,v_{n}\}$ be
  the set of all vertices of $R$ satisfying $Uv=a$.  Then we take the
  formal sums $v_{1}-v_{2},\cdots,v_{n-1}-v_{n}$ of vertices of $R$ as
  generators of $C(R)$ at degree $w(a)+1$.
\end{itemize}
The $U$-action in this module lowers degree by 2.  The differential
$\partial$ on $C(R)$ vanishes on all even generator $v$, while for an
odd generator $a$ the definition is slightly more complicated.  Let
$v$ and $w$ be two even generators such that $a=U^{m}v-U^{n}w$ as
formal sums of vertices of $R$, for some nonnegative integers $m$ and
$n$. Then set $\partial(a)=U^{m+1}v-U^{n+1}w$.  In this case we say
that the odd generator $a$ is an \emph{angle} between the even
generators $v$ and $w$. (For pictorial descriptions of $C(R)$, see
\cite{DM}.)

A negative definite plumbing tree $\Gamma$ (with associated plumbed
four-manifold $X_{\Gamma}$) and a characteristic vector $k$ of
$H^2(X_\Gamma, \Z)$ determines a graded root $(R_\Gamma,w_k)$ as
follows. Let $L$ be the non-compact $1$-dimensional $CW$-complex
having the points of $H_2(X_\Gamma, \Z)=\Z^{\vert \Gamma\vert }$ as
$0$-cells and a $1$-cell connecting two vertices $\ell, \ell' \in
H_2(X_{\Gamma}; \Z)$ if $\ell'=\ell+ v$ for some $v\in \Gamma$. The
characteristic vector $k \in
H^2(X_\Gamma,\Z)={\Hom}(H_2(X_\Gamma),\Z)$ determines a quadratic
function through the formula
\begin{equation}\label{eq:Quadratic}
\chi_k(\ell)=-\frac{1}{2}(k(\ell)+\ell^2). 
\end{equation}
For each $n \in \Z$ let $S_n$ be the set of connected components of
the subcomplex of $L$ spanned by the vertices satisfying $\chi_k \leq
n$. We define $R_\Gamma$ to be the graph with vertex set
$\bigcup_{n\geq 0} S_n$, in which two vertices $x_1$ and $x_2$ are
connected by a directed edge if and only if the elements $x_1\in
S_n$ and $x_2 \in S_{n+1}$ (corresponding to components of the
sublevel sets $\chi _k \leq n$ and $\chi _k \leq n+1$, respectively)
satisfy $x_1 \subset x_2$.  We define $w_k \colon R_\Gamma \to \Q$ 
for $ x \in S_n $ by
the formula
\[
w_k(x)= \frac{k^2+|\Gamma|}{4} -2n. 
\] 
A negative definite plumbing graph $\Gamma$ is \defin{rational} if it
is the resolution graph of a rational singularity, {\rm{i.e.}} a
singularity with geometric genus $p_g=0$. Note that according to
\cite[Theorem 1.3]{NemethiLO} a negative definite graph is rational if
and only if the boundary $Y_\Gamma$ of the associated four-dimensional
plumbing $X_{\Gamma}$ is a Heegaard Floer $L$-space. We say that
$\Gamma$ is \defin{almost-rational} if there is a vertex $v$ of
$\Gamma$ on which we can change the weight in such a way that the
result is rational.

\begin{thm}(\cite{OS20})
\label{isom}
Let $\Gamma$ be an almost-rational graph, $\s \in \Spinc(Y_\Gamma)$ a
$\spinc$ structure on $Y_\Gamma$, and $k$ a characteristic vector of
the intersection lattice of $X_\Gamma$ representing a $\spinc$
structure that restricts to $\s$ on the boundary $\partial
X_\Gamma=Y_\Gamma$. Then there is an isomorphism
$\mathbb{H}^-(R_\Gamma, w_k)\simeq \HFm(Y_\Gamma, \s)$ of graded
$\F[U]$-modules. \qed
\end{thm}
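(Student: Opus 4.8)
\emph{Strategy.} Since $Y_\Gamma$ is a rational homology sphere, $\HFm(Y_\Gamma,\s)$ and $\HF^+(Y_\Gamma,\s)$ determine one another through the standard exact sequence relating them to $\HFinf(Y_\Gamma,\s)=\F[U,U^{-1}]$, and likewise $\mathbb{H}^-(R_\Gamma,w_k)$ determines and is determined by its ``plus'' counterpart $\mathbb{H}^+(R_\Gamma,w_k)$; so my plan is to prove the corresponding isomorphism for the ``plus'' flavours, which is the form in which the combinatorial model was originally set up. The argument goes by induction on the number $|\Gamma|$ of vertices, the base case being the rational graphs. Throughout I fix a vertex $v_0$ of $\Gamma$ whose weight $m(v_0)$ can be decreased so as to make $\Gamma$ rational.

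\emph{Base case.} If $\Gamma$ is rational then, by \cite{NemethiLO}, $Y_\Gamma$ is an $L$-space, so $\HFm(Y_\Gamma,\s)$ is a free $\F[U]$-module of rank one whose top degree is the correction term $d(Y_\Gamma,\s)$. On the other side, rationality of $\Gamma$ says exactly that every sublevel set $\{\chi_k\le n\}$ of $L$ is connected, so $R_\Gamma$ is a bare stem and $\mathbb{H}^-(R_\Gamma,w_k)$ is again free of rank one, with top degree $\max_{x}w_k(x)$. Completing the square in \eqref{eq:Quadratic} rewrites $w_k$ on the component containing $\ell$ as $\tfrac14\big((k+2\ell)^2+|\Gamma|\big)$, so that top degree equals $\max\tfrac14\big((k')^2+|\Gamma|\big)$ over characteristic vectors $k'$ of $X_\Gamma$ restricting to $\s$. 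By Ozsv\'ath--Szab\'o's correction-term inequality for boundaries of negative definite four-manifolds (\cite{ozsvath2003absolutely}) one has $d(Y_\Gamma,\s)\ge\tfrac14\big((k')^2+|\Gamma|\big)$ for every such $k'$, and this bound is sharp when $Y_\Gamma$ is an $L$-space bounding a negative definite plumbing; hence the two top degrees agree and the two graded $\F[U]$-modules are isomorphic.

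\emph{Inductive step.} Assume $\Gamma$ is almost-rational but not rational. Let $\Gamma(j)$ be the graph with $m(v_0)$ replaced by $m(v_0)-j$, so $\Gamma(0)=\Gamma$ and $\Gamma(j)$ is rational for $j\gg0$, and let $\Gamma^{\ast}=\Gamma\setminus v_0$, a forest with fewer vertices each of whose components is rational (deleting $v_0$ from the already-rationalised graph preserves rationality, by \cite{Nem3}). Decreasing the framing at $v_0$ by one is a surgery on an unknotted component of the plumbing link, so the surgery exact triangle of \cite{OS20} gives a long exact sequence linking the relevant $\spinc$-summands of $\HF^+(Y_{\Gamma(j+1)})$, $\HF^+(Y_{\Gamma(j)})$ and $\HF^+(Y_{\Gamma^{\ast}})$, the last of which is determined by the base case and a K\"unneth formula (as $Y_{\Gamma^{\ast}}$ is the connected sum of the plumbed manifolds of the components of $\Gamma^{\ast}$). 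The three $1$-complexes $L$ attached to $\Gamma(j+1)$, $\Gamma(j)$, $\Gamma^{\ast}$ are built from the same ambient lattice with $\chi$-functions differing only in the contribution of $v_0$, and the mapping-cone/filtration argument of \cite{OS20,DM} produces a matching exact triangle of the corresponding lattice homologies. Granting that these two triangles fit into a commutative ladder --- the crucial point, discussed below --- the five lemma then propagates the isomorphism: by the base case it holds for $\Gamma(j)$ with $j\gg0$, hence (descending induction on $j$, using the $\Gamma^{\ast}$-term, which is known by the induction on $|\Gamma|$) for all $j$, in particular for $\Gamma(0)=\Gamma$. The $\Q$-gradings cause no trouble, since they are normalised to agree in the base case and all triangle maps on both sides are graded of the same degree; and carrying out the argument with the Dai--Manolescu model complex $C(R_\Gamma)$ (whose homology is $\mathbb{H}^-(R_\Gamma,w_k)$ as a graded $\F[U]$-module) yields the isomorphism as $\F[U]$-modules, not merely of graded vector spaces.

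\emph{Main obstacle.} The only genuinely delicate ingredient is the compatibility of the two exact triangles, i.e. the existence of the commutative ladder: one must show that the four-dimensional cobordism maps in the surgery triangle, transported through the isomorphisms supplied by the inductive hypothesis, become the natural combinatorial maps between lattice homologies. This is the technical core of the subject: it requires expressing those cobordism maps through their action on characteristic vectors and through the adjunction relations that present the combinatorial model, and recognising the result as the expected combinatorial maps. For almost-rational graphs with more than one bad vertex --- beyond the range treated directly in \cite{OS20} --- one additionally uses N\'emethi's reduction of the whole induction to the single distinguished vertex $v_0$, the discrepancy introduced at each step being controlled by the $\tau$-function of a generalised Laufer computation sequence; this is what \cite{Nem3} provides.
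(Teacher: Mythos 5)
The paper does not actually prove Theorem~\ref{isom}: it is stated with a \verb|\qed| as a citation of \cite{OS20} (and, as the surrounding discussion explains, its extension to almost-rational graphs is due to N\'emethi \cite{Nem3}, with the two-bad-vertex case in \cite{OSSspectral}). So there is no in-paper argument to compare yours against; what you have written is a reconstruction of the strategy used in the cited sources, and it does capture the overall shape of that argument (rational base case, descending induction on the weight at the distinguished vertex $v_0$, surgery exact triangle against the deleted-vertex graph $\Gamma^{\ast}$, matching combinatorial triangle, five lemma).

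That said, two of the steps you present as straightforward are in fact where the real work lives, and as written they leave genuine gaps. First, in the base case you deduce that the top degree of $\HFm(Y_\Gamma,\s)$ equals $\max_{k'}\tfrac14\bigl((k')^2+|\Gamma|\bigr)$ by invoking the Ozsv\'ath--Szab\'o inequality and asserting that it is ``sharp when $Y_\Gamma$ is an $L$-space bounding a negative definite plumbing.'' That sharpness is not a corollary of the inequality: it is a theorem in its own right (for at most one bad vertex it is one of the main results of \cite{OS20}, and its proof uses precisely the kind of surgery-triangle induction you are about to run), so presenting it as an ingredient of the base case is circular unless you prove it simultaneously. Likewise, the claim that rationality of $\Gamma$ is equivalent to every sublevel set of $\chi_k$ being connected is a nontrivial theorem of N\'emethi, not a restatement of the definition. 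Second, the ``commutative ladder'' you flag as the main obstacle really is the entire content of the theorem: one must identify the cobordism maps $F^{+}_{W,\mathfrak{t}}$ in the surgery triangle, after transport through the inductively-constructed isomorphisms, with the combinatorial maps between lattice complexes, and one must also check that the surgery triangle (which a priori mixes $\spinc$ structures) is compatible with the single $\spinc$ summand you want to track. Your sketch names this difficulty but does not address it, so as it stands the proposal is an accurate high-level outline of the cited proof rather than a proof.
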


\subsection{Torus knots again}
With this preparation in place, we now return to the computation of
invariants of torus knots. We start by describing a plumbing
presentation of $\Sigma (T_{p,q})$, where the covering involution
$\tau$ is also visible.

\begin{lem}
\label{lem:PlumbingGraphForTorus}
Suppose that $(p,q)=1$ and $pq$ even.
The plumbing graph $\Gamma=\Gamma _{p,q}$ presenting the
double branched cover $\Sigma (T_{p,q})$ of $S^3$ branched along $T_{p,q}$
can be assumed to have the following properties:
\begin{itemize}
\item $\Gamma$ is star-shaped with three legs $L_{fix},L_{1},L_{2}$
going out from the central vertex $c$.
\item The coefficients on $L_{1}$ and $L_{2}$ are the same.
\item The covering involution on $\Sigma (T_{p,q})$ can be modelled by a map
on $\Gamma$ which fixes $L_{fix}$ and $c$,
and flips $L_{1}$ and $L_{2}$.
\item The covering involution on $\Sigma(K)$ extends smoothly to the
  plumbed 4-manifold $X_{\Gamma}$, which is compatible with the above
  involution on $\Gamma$.
\end{itemize}
\end{lem}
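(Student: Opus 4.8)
The plan is to build the plumbing graph $\Gamma_{p,q}$ explicitly from a surgery/Seifert-fibered description of $\Sigma(T_{p,q})$ and to read off the covering involution from the geometry of the branched cover. Recall that $\Sigma(T_{p,q})$ is the Brieskorn sphere $\Sigma(2,p,q)$, a Seifert fibered space over $S^2$ with three exceptional fibers of orders $2$, $p$, $q$. Its negative-definite plumbing presentation is therefore a star-shaped graph with a central vertex $c$ and three legs $L_{fix}, L_1, L_2$ whose continued-fraction expansions are dictated by the Seifert invariants attached to the multiplicities $2$, $p$, and $q$ respectively. Since $pq$ is even, exactly one of $p,q$ is even; say $q$ is even (the other case is symmetric after relabeling). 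The key point, which I would establish first, is that the $\Z/2\Z$ covering involution $\inv$ on $\Sigma(2,p,q)$ corresponds to the deck transformation of the double cover $\Sigma(2,p,q)\to S^3$ branched along $T_{p,q}$, and this cover can be realized fiberwise: $T_{p,q}$ sits on the Heegaard torus, and $\Sigma(T_{p,q})$ double covers $S^3$ in a way compatible with the Seifert structure, exchanging the two exceptional fibers that lie over the two ``halves'' of the branch locus.

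Concretely, first I would recall (e.g.\ from Orlik's book or Neumann--Raymond) the standard Seifert data and the associated star-shaped plumbing for $\Sigma(2,p,q)$, and identify which leg corresponds to which exceptional fiber. The leg $L_{fix}$ should be the one associated to the multiplicity-$2$ singular fiber, whose preimage in $\Sigma(T_{p,q})$ is the fixed-point set $\Ktilde = {\rm Fix}(\inv)$ (the branch locus upstairs), together with the central vertex $c$ corresponding to the base $S^2$; the legs $L_1$ and $L_2$ should be the two copies of the fiber data coming from the $p$- and $q$-multiplicities of $T_{p,q}$ seen from above. Second, I would argue that because the branched covering $\Sigma(T_{p,q})\to S^3$ is the two-fold cyclic cover associated to $H_1(S^3\setminus T_{p,q})\to\Z/2\Z$, and $T_{p,q}$ is a fibered knot whose monodromy is periodic, the two ``$p$-type'' exceptional fibers upstairs are swapped by $\inv$ — hence $L_1$ and $L_2$ carry identical continued-fraction coefficients, and the graph automorphism flipping $L_1\leftrightarrow L_2$ while fixing $c$ and $L_{fix}$ pointwise models $\inv$. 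Third, to get the last bullet — smooth extension of $\inv$ over $X_\Gamma$ — I would observe that the plumbed $4$-manifold $X_\Gamma$ is itself a Seifert-type (disk-bundle-plumbing) $4$-manifold over which the $S^1$-action extends, and the graph flip is realized by an honest diffeomorphism of $X_\Gamma$ permuting the plumbing pieces over $L_1$ and $L_2$; this diffeomorphism restricts to $\inv$ on $\partial X_\Gamma = \Sigma(T_{p,q})$ by construction, giving exactly the equivariance needed to invoke Lemma~\ref{lem:naturalitylemma} later.

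The main obstacle I anticipate is the bookkeeping that pins down \emph{which} geometric involution on $\Sigma(2,p,q)$ is the covering involution of $T_{p,q}$, and verifying that it is the one exchanging $L_1$ and $L_2$ rather than, say, acting within a single leg. The subtlety is that $\Sigma(2,p,q)$ admits several involutions (it is also the branched double cover along other links, as the $\Sigma(2,3,7)$ example in Subsection~\ref{ssec:example} illustrates), so the identification must use specific structure of $T_{p,q}$ — e.g.\ that the double branched cover along $T_{p,q}$ comes from the cyclic $\Z/2$-cover dual to the meridian, and track how this cover interacts with the genus-one open book / Seifert fibration. I would handle this by giving an explicit equivariant surgery diagram: start from the standard surgery presentation of $\Sigma(T_{p,q})$ as boundary of a plumbing, exhibit the strong inversion of $T_{p,q}$ (every torus knot is strongly invertible, and the quotient of the exterior is a Montesinos-type picture) and check that the induced involution on the surgered manifold is free away from $\Ktilde$ with quotient $S^3$ and branch set $T_{p,q}$; then blow up/down as needed to reach a negative-definite plumbing, all equivariantly, so that symmetry of the two legs is manifest at every stage. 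Once the equivariant surgery picture is in hand, the four claimed properties follow by inspection, and the extension over $X_\Gamma$ is automatic because each Kirby/plumbing move is performed symmetrically.
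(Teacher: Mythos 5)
Your approach is genuinely different from the paper's, and it has gaps that would need to be filled. The paper works entirely in singularity theory: $\Sigma(T_{p,q})$ is the link of $\{z^2=x^p+y^q\}$, so the covering involution is manifestly $(z,x,y)\mapsto(-z,x,y)$; one then takes the embedded resolution of the plane curve $\{x^p+y^q=0\}$ (a linear chain with a unique $(-1)$-curve of multiplicity $pq$, ends of multiplicity $p$ and $q$) and applies the double-cover-resolution algorithm of \cite[Section~7.2]{GS}. Lemma~7.2.8 there says that when $p$ is even, every curve between the $p$-end and the $(-1)$-curve has even multiplicity; those curves therefore split into two copies in the double cover (giving $L_1,L_2$ with identical coefficients), while the odd-multiplicity curves give $L_{fix}$. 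The involution and its smooth extension over $X_\Gamma$ are built into the resolution. This is short precisely because the identity of the involution never has to be hunted for.

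Your Seifert-fibration route has three concrete problems. First, the claim that $\Sigma(2,p,q)$ is Seifert fibered ``with three exceptional fibers of orders $2,p,q$'' is false exactly in the case at hand: when $pq$ is even the exponents $2,p,q$ are not pairwise coprime, and the Seifert multiplicities are not $2,p,q$ (for $p$ even they are $p/2,q,q$; e.g. $\Sigma(T_{4,3})=\partial E_6$ has multiplicities $2,3,3$, not $2,4,3$). This repeated multiplicity is the actual source of the two isomorphic legs, and it is precisely what your description suppresses. Second, your identification of $L_1,L_2$ with ``the $p$- and $q$-multiplicities'' is internally inconsistent: the lemma requires $L_1$ and $L_2$ to have identical continued-fraction coefficients, which cannot correspond to the distinct integers $p$ and $q$; the two legs are two copies of one leg, not the $p$-leg and the $q$-leg. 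Third, the appeal to strong inversions is the wrong tool: the strong inversion of $T_{p,q}$ is an involution of $(S^3,K)$ whose lift to $\Sigma(K)$ is a \emph{different} involution from the covering (deck) transformation, which is what the lemma is about; an equivariant surgery picture built from the strong inversion would produce the wrong $\Z/2$-action. A Seifert-theoretic proof is possible (one must compute the actual Seifert data of $\Sigma(2,p,q)$ for non-coprime exponents, show the covering involution acts on the base orbifold as the deck map of $S^2(p/2,q,q)\to S^2(p,2,q)$ swapping the two $q$-points, and then extend to $X_\Gamma$ equivariantly), but none of that bookkeeping is present, and the stated outline would not lead there without correcting the points above.
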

\begin{proof}
Recall that the double branched cover $\Sigma (T_{p,q})$ is equal to
the link of the hypersurface singularity $z^2=x^p+y^q$, hence $\Sigma
(T_{p,q})$ can be presented as a plumbed manifold along the resolution
dual graph of the above singularity. This graph can be easily
determined by computing first the embedded resolution of the curve
singularity $x^p+y^q=0$, and then using a simple algorithm (described,
for example in \cite[Section~7.2]{GS}) for computing the resolution
graph of the singularity.  The embedded resolution of $T_{p,q}$ gives
a linear graph, where the single $(-1)$-curve is intersected by the
proper transform of the knot, and the multiplicities at the two ends
are $p$ and $q$, respectively (see \cite{EN}). Now
\cite[Lemma~7.2.8]{GS} shows that if say $p$ is even, then the curves
between the leaf with multiplicity $p$ and the $(-1)$-curve
intersecting the proper transform have all even multiplicities.
The algorithm described in \cite[Section~7.2]{GS} then provides the
resolution graph, together with the information about the covering
transformation, satisfying the properties listed in the lemma.
\end{proof}

\begin{lem}
\label{lem:IdentifOfJforpqeven}
Suppose that $\Gamma$ is a plumbing graph as in
Lemma~\ref{lem:PlumbingGraphForTorus} and the action of $\tau$ exchanges the
two legs. Then the associated reduced  connected homology vanishes.
\end{lem}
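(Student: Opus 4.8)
The plan is to show that the involution $\tau$ acts as the identity on the $\iota$-complex $\CFm(\Sigma(T_{p,q}),\s_0)$ up to local equivalence, so that applying Proposition~\ref{prop:Condition} (equivalently, noting that the homotopy-commuting condition becomes vacuous and $\HFB_{\rm{conn}}$ is just $\HFm$) yields $\HFB_{\rm{red\kotojel conn}}(T_{p,q})=0$. The key point is that $\Gamma=\Gamma_{p,q}$ is star-shaped with three legs, hence almost-rational (deleting the central vertex leaves a disjoint union of linear graphs, and one can lower one leg-weight to make it rational), so by Theorem~\ref{isom} (N\'emethi) the graded $\F[U]$-module $\HFm(\Sigma(T_{p,q}),\s_0)$ is computed by the graded root $(R_\Gamma,w_k)$, and by Dai--Manolescu~\cite{DM} the full $\iota$-complex is chain homotopy equivalent to the model complex $C(R_\Gamma)$ equipped with the involution induced by $\tau$. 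Since $\tau$ extends to a smooth involution of the plumbed $4$-manifold $X_\Gamma$ compatible with the graph symmetry that fixes $L_{fix}$ and $c$ and swaps $L_1,L_2$, it acts on the lattice $L=H_2(X_\Gamma;\Z)=\Z^{|\Gamma|}$ by permuting the two leg-coordinate blocks and fixing the rest, and it preserves the characteristic vector $k$ representing $\s_0$ (as $T_*\mathfrak{t}=\mathfrak{t}$, cf.\ the proof of Theorem~\ref{thm:ConnKnot}); hence $\tau$ preserves the quadratic function $\chi_k$ and induces a graph automorphism of $R_\Gamma$ preserving $w_k$.

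First I would establish that this induced automorphism of the graded root is in fact the identity. The graded root $R_\Gamma$ is built from the connected components of the sublevel sets $\{\chi_k\le n\}$ in $L$; because $\tau$ swaps the two legs of equal weight, it swaps any two lattice points that differ only by exchanging the $L_1$- and $L_2$-blocks, and it fixes the ``symmetric'' subcomplex $L^\tau$ of vectors with equal entries on the two legs. The claim is that every connected component of every sublevel set is $\tau$-invariant, i.e.\ contains a symmetric vertex (or more precisely is carried to itself, not to a distinct component). This follows from a path-connectivity argument: starting from any lattice vector $\ell$ with $\chi_k(\ell)\le n$, one can descend along the gradient-like flow of $\chi_k$ (adding basis vectors $v\in\Gamma$ that decrease $\chi_k$) toward the minimum of $\chi_k$ on that component; the minimum can be taken on the symmetric sublattice by averaging/symmetrizing, using convexity of $\chi_k$ together with $\tau$-invariance, so $\ell$ and $\tau(\ell)$ lie in the same component. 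Hence $\tau$ fixes every vertex of $R_\Gamma$, so it acts as the identity on $\mathbb{H}^-(R_\Gamma,w_k)$.

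Next I would upgrade this from homology to the model complex $C(R_\Gamma)$: the even generators are the leaves of $R_\Gamma$ and the odd generators are differences $v_i-v_j$ at branch vertices, all of which are determined by the combinatorics of $R_\Gamma$, which $\tau$ fixes; one checks $\tau$ sends each even generator to itself and permutes the odd generators at a fixed branch vertex among themselves, so $\tau$ is a filtered automorphism of $C(R_\Gamma)$ inducing the identity on associated graded pieces, hence is chain homotopic to $\id$ (the standard argument that a filtered self-map acting as the identity on graded pages of a finitely-generated complex over $\F[U]$ is homotopic to the identity). Therefore $\tau_\#\simeq\id$, the hypothesis of a self-local equivalence homotopy-commuting with $\tau_\#$ is automatically satisfied, and as in the proof of Theorem~\ref{thm:AltAndQuasiAlt} we get $\HFB_{\rm{conn}}(T_{p,q})=\HFm(\Sigma(T_{p,q}),\s_0)$ with $\deltaover=\deltaunder=\delta$, so $\HFB_{\rm{red\kotojel conn}}(T_{p,q})=0$ by Proposition~\ref{prop:Condition}. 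The main obstacle I anticipate is the combinatorial step showing every component of every $\chi_k$-sublevel set is $\tau$-invariant (equivalently, that the induced automorphism of the graded root is trivial); making the ``descend to the symmetric sublattice'' argument rigorous — controlling that the symmetrization of a lattice vector stays in the same $\chi_k$-sublevel component — is the delicate part, and I would handle it by exploiting the explicit star-shaped structure from Lemma~\ref{lem:PlumbingGraphForTorus}, reducing to Laufer-type computations of minimal vectors on each leg independently.
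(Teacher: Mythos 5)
Your plan hinges on the assertion that every connected component of every sublevel set $\{\chi_k\le n\}$ is $\tau$-invariant, so that the induced automorphism of the graded root $R_\Gamma$ is trivial and hence $\tau_\#\simeq\id$. That step is a genuine gap: it is not established by your symmetrization argument and is false in general. Averaging $\ell$ and $\tau(\ell)$ produces a point of $\frac12\Z^{|\Gamma|}$, not of the lattice $\Z^{|\Gamma|}$, and sublevel-set components in the lattice (built from $\pm e_v$-moves) are not convex, so there is no guarantee that $\ell$ and $\tau(\ell)$ lie in the same component of $\{\chi_k\le n\}$. Indeed, the graded root will typically have leaves permuted non-trivially by $\tau$ at intermediate levels, which is exactly why the paper cannot and does not claim $\tau_\#\simeq\id$ here (contrast with Proposition~\ref{prop:TorusOdd}, which only covers $pq$ odd).

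What the paper actually proves is strictly weaker and is arranged to sidestep this obstacle. It shows only that the minimum of $\chi_k$ is attained on the $\tau$-fixed sublattice: writing $\chi_k(x_0)=\chi_k(x_0^{fix})+Q_1(x_0)+Q_2(x_0)$ and, assuming $Q_1(x_0)\le Q_2(x_0)$, replacing $x_0$ by the symmetrized lattice point $x'=x_0^{fix}+\sum_{v\in L_1}x_v(v+\sigma(v))$ gives $\chi_k(x')\le\chi_k(x_0)$. This yields one $\tau$-invariant top-weight vertex of the symmetric graded root, not invariance of all components. From there the paper runs an induction on the number of non-$\tau$-invariant leaves, constructing at each step an explicit $\F[U]$-linear, $\Z/2\Z$-equivariant chain map $F$ on the model complex $C(R)$ that collapses a pair of swapped leaves $\{v,\sigma(v)\}$ and the corresponding angles $\{\alpha,\sigma(\alpha)\}$ onto an invariant vertex $x$ in the same grading, and verifies $F$ is a chain map and a local equivalence. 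The end result is only a local equivalence to a symmetric graded root with trivial action; this gives $\deltaover(K)=\deltaunder(K)$ and the vanishing of $\HFB_{\rm{red\kotojel conn}}(K)$ via Proposition~\ref{prop:Condition}, without ever showing $\tau_\#\simeq\id$. Your later step (upgrading identity on homology to $\tau_\#\simeq\id$ via Lemma~\ref{algebraicfundation}) would be fine if the first step held, but it does not, so the argument as written does not go through.
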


\begin{proof}
This information will be sufficient to identify the $\Z/2\Z$-action on
the associated graded root $R$. Recall that in computing a graded
root, we have to choose a characteristic vector $k\in H^2 (X_{\Gamma};
\Z)$, so that we can work with the induced weight function
$\chi_{k}$. Here, we choose $k$ to be the canonical
characteristic vector, which is given by $k(v)=-2-v^2$, 
where $v^2$ denotes the self-intersection of the vertex $v$ in the
plumbing graph.  Then $k$ is clearly $\Z/2\Z$-invariant, so the
induced spin$^c$ structure $[k]|_{\partial
  W_{\Gamma}}\in{\rm{Spin}}^{c}(\Sigma(K))$ on the boundary is also
$\Z/2\Z$-invariant.
Recall that the $\Z/2\Z$-action on the set of spin$^c$
structures on $\Sigma(K)$ leaves the unique spin structure
$\mathfrak{s}_{0}$ invariant. Actually, even more is true:
$\mathfrak{s}_{0}$ is the unique fixed point of the action, as shown
in \cite[Page~1378]{grigsby2006knot} and
\cite[Remark~3.4]{levine2008computing}. Hence the lattice homology of
$\Gamma$ with respect to $k$ computes $\HFm
(\Sigma(K),\mathfrak{s}_{0})$.  Since we are using a
$\Z/2\Z$-invariant characteristic vector, the associated graded root
$R_{k}$, computed using the weight function $\chi_{k}$, admits a
$\Z/2\Z$-action which acts by permuting its vertices. In conclusion,
$R_{k}$ is a symmetric graded root (in the terminology of 
\cite[Definition~2.11]{DM}).  

Next, we claim that there exists a $\Z/2\Z$-invariant vertex
$v\in V(R_k)$ with  minimal $\chi_{k}$-value. 
To prove this, we have to find a lattice point
$x=\sum x_{v} \cdot v \in \Z^{\vert \Gamma \vert }$ which
satisfies the following properties:
\begin{itemize}
\item $x$ is $\Z/2\Z$-invariant, i.e. the coefficients of $x$
  on the arm $L_{1}$ and the coefficients of $x$ on $L_{2}$ are the
  same.
\item $\chi_{k}(x)\le \chi_{k}(y)$ for any other lattice point $y=\sum_{v\in
  V(\Gamma)}y_{v}\cdot v$.
\end{itemize}
Once we have found such a lattice points, the claim about the vertex
$v$ can be proved using the following argument. Recall
that vertices of $R_{k}$ are components of sublevel sets of
$\chi_{k}$. The $\Z/2
\Z$-action on $R_{k}$ permutes the components of sublevel sets, hence
if we denote the component of the minimally-weighted sublevel set
which contains the invariant lattice point $x$ by $C$, then $C$ is
fixed by the $\Z/2 \Z$-action.  

To see the existence of $x$ with the above properties, we first choose
any lattice point $x_{0}=\sum_{v\in V(\Gamma}(x_{0})_{v}\cdot v$ such
that $\chi_{k}(x_{0})$ is minimal. Then we can write $\chi_{k}(x_{0})$ as
\[
\chi_{k}(x_{0})=\chi_{k}(x^{fix}_{0})+Q_{1}(x_{0})+Q_{2}(x_{0}),
\]
 where $x^{fix}_{0} = \sum_{v\in L_{fix}\cup c} x_v \cdot v$ is the
 "fixed part" $x_{0}$ and $Q_{1},Q_{2}$ are functions defined on
 $\Z^{\vert \Gamma \vert }$ using the formula
\[
-2Q_{i}(x) = \sum_{v\in L_{i}} (x^{2}_{v} \cdot e_{v} + x_{v} k_{v}) + \sum_{\text{edge }(v_{1} v_{2})\text{ in }L_{i}} x_{v_{1}} x_{v_{2}} + x_{v^{i}_{c}} x_{c}
\]
for $i=1,2$. (Here, we have denoted the vertex in $L_{i}$ which is connected
to the central vertex $c$ as $v_{c}^{i}$.)

Assume, without loss of generality, that $Q_{1}(x_{0})\le Q_{2}(x_{0})$,
and consider the lattice point $x^{\prime}$ defined as
\[
x^{\prime}=x^{fix}_{0} + \sum_{v\in V(L_{1})}x_{v}\cdot(v+\sigma(v)),
\]
 where $\sigma$ denotes the $\Z/2\Z$-action on $\Gamma$. Then we have 
\[
\chi_{k}(x^{\prime})=\chi_{k}(x^{fix}_{0})+2Q_{1}(x_{0})\leq 
\chi_{k}(x_{0})+Q_{1}(x_{0})+ Q_{2}(x_{0})= \chi_{k}(x_{0}).
\]
Since we assumed that $\chi_{k}(x_{0})$ is minimal among all lattice
points, we get $\chi_{k}(x^{\prime})=\chi_{k}(x_{0})$, implying
$x^{\prime}$ satisfies the desired properties.

Now we claim that our symmetric graded root $R=R_{k}$ is locally
equivalent to another symmetric graded root $R^{\prime}$, where the
$\Z/2\Z$-action on $R^{\prime}$ is trivial. This can be verified by
induction on the number $n_{R}$ of non-$\Z/2\Z$-invariant leaves of
$R$.  In this part of the proof, in the notation we will confuse
graded roots with their associated model complexes.  For example, when
we write that two given symmetric graded roots are locally equvialent,
we will actually mean that their associated model chain complexes are
locally equivalent.

The base case is simple: if $n_{R}=0$, then the
$\Z/2\Z$-action on $R$ is already trivial, so we are done. In
the general case, choose a non-$\Z/2\Z$-invariant leaf $v$ of
$R$. Since $R$ carries a $\Z/2\Z$-invariant leaf in its
top-degree level, we can always find a $\Z/2\Z$-invariant
vertex $x$ of $R$ which lies in the same grading as $v$ does. Denote
the angle between the infinite monotone path starting at $v$ and at $x$
by $\alpha$. Then we define $R_{1}$ as
the graded root associated to the model complex
we get from the model complex of $R$ by deleting $v, \sigma (v)$ and 
the angles $\alpha, \sigma (\alpha )$.
Define a map $F$ from the associated model complex of $R$ to that of
$R_{1}$ as follows.
\begin{itemize}
\item $F(v)=F(\sigma (v))=x$ and $F(w)=w$ for any leaf $w\ne v$.
\item $F(\alpha)=F(\sigma(\alpha))=0$ and $F(\beta)=\beta$
 for any angle $\beta\ne\alpha$.
\item Extend this map $\Field [U]$-linearly to the model chain complex
  of $R$.
\end{itemize}
This 
map $F$ is obviously $\Field [U]$-linear and $\Z/2\Z$-equivariant, so
if we only prove that $F$ is a chain map, it would automatically be a
local equivalence to its image. To check that $F$ is a chain map, it
suffices to check that $\partial(F(v))=F(\partial v)$ and
$\partial(F(\alpha))=F(\partial\alpha)$ by linearity and
equivariance. Indeed, 
\begin{itemize}
\item 
$\partial(F(v))=\partial(\text{even generator})=0$ and 
$F(\partial v)=F(0)=0$,
\item $\partial(F(\alpha))=\partial(0)=0$ and
  $F(\partial\alpha)=F(v+x)=F(v)+F(x)=x+x=0$.
\end{itemize}
Consequetly $F$ is a local equivalence. This implies that $R$ is
locally equivalent to $R_{1}$, and the number of
non-$\Z/2/\Z$-invariant leaves of $R_{1}$ is strictly smaller than the
number of non-$\Z/2\Z$-invariant leaves of $R$. Thus, by induction, we
deduce that $R$ is locally equivalent to a symmetric graded root
$R^{\prime}$ whose $\Z/2\Z$-action is trivial. This gives us the
equality $\overline{\delta}(K)=\underline{\delta}(K)$, which by
Proposition~\ref{prop:Condition} then implies the claim.
\end{proof}

\begin{proof}[Proof of Theorem~\ref{thm:TorusVanishing}]
Proposition~\ref{prop:TorusOdd} in the case $pq$ odd, and the
combination of Lemmas~\ref{lem:PlumbingGraphForTorus} and
\ref{lem:IdentifOfJforpqeven} show that the covering transformation on
$\Sigma (T_{p,q})$ is homotopic to $\id$. The rest of the statement
then follows as for the case of alternating knots.
\end{proof}

\section{Arborescent and Montesinos knots}
\label{sec:Montesinos}
A plumbing tree $\Gamma$ is a tree whose vertices are
labelled by integers (see $\Gamma$ on Figure~\ref{fig:WeightedGraph} below).
\begin{figure}[t]
\input{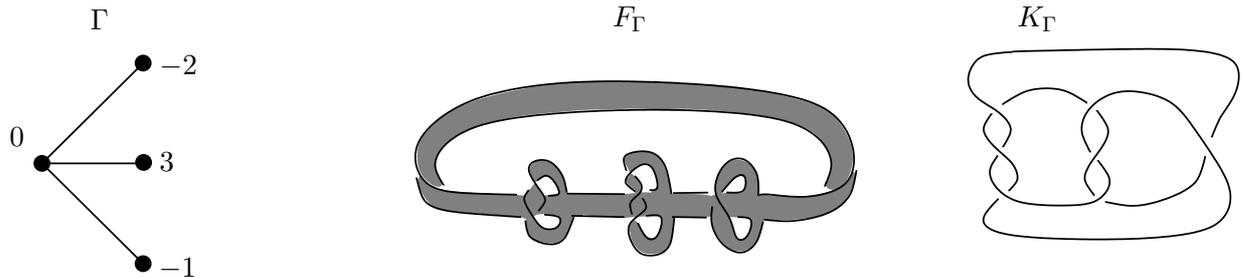}
\caption{The plumbing graph $\Gamma$ on the left
determines a surface $F_{\Gamma}$ (shown in the middle) with 
boundary $K_{\Gamma}$.}
\label{fig:WeightedGraph} 
\end{figure}
Above we associated a four-manifold $X_{\Gamma}$ (and its
three-dimensional boundary $Y_{\Gamma}$) to a plumbing tree
$\Gamma$. A variant of this construction associates a surface
$F_\Gamma\subset S^3$ to $\Gamma$: for every vertex we
consider an annulus or a M\"obius band, given by introducing
half-twists dictated by the label of the vertex, and plumb (or
Murasugi sum) these annuli and M\"obius bands together according to
$\Gamma$.  (See Figure~\ref{fig:WeightedGraph} for a simple example.)
The boundary of $F_\Gamma$ specifies a link $K_\Gamma=\partial
F_\Gamma$, called an \defin{arborescent link} associated to $\Gamma$.
\begin{rem}
Notice that the link is not determined uniquely by the graph, since at
vertices of higher valency we need to determine an order for the edges
when considering a planar presentation, and the link might depend on
this choice. In addition, the location of the plumbing region relative
to the twists also might influence the resulting link. With slightly
more information (see Gabai's introductory work in \cite{Gabai})
attached to the tree this procedure can be made unique, though.
\end{rem}

  The resulting $K_{\Gamma}$
is called a \defin{Montesinos link} if the tree $\Gamma$ is
star-shaped (i.e., has at most one vertex with valency more than 2);
it is a \defin{pretzel link} if $\Gamma$ is star-shaped with all legs
of length one, and is 2-bridge if $\Gamma $ is linear (i.e., all
valencies are either 1 or 2).

The construction of the four-manifold and the knot associated to
$\Gamma$ is connected by the fact that (by repeated application of
Montesinos' trick) the double branched cover $\Sigma(K_\Gamma)$
of an arborescent link $K_\Gamma$ is homeomorphic to the boundary
$Y_\Gamma$ of the four-dimensional plumbing $X_\Gamma$ associated to
$\Gamma$. Indeed, $X_{\Gamma}$ can be presented as the double branched cover
of $D^4$ branched along the surface we get by pushing the interior of the
surface $F_{\Gamma}$  of the above pluming into $D^4$.

Next we give an algorithm for computing the connected Floer homology
group $\HFB_{\rm{conn}}(K)$ of an arborescent knot based on lattice
homology.  Recall that the graded module $\mathbb{H}^-(R,w)$
associated to a graded root $(R,w)$ is the homology of a canonical
free, finitely generated chain complex (the model complex) $(C(R),
\partial)$ over the polynomial ring $\F[U]$. The following was
observed in \cite[Remark 4.3]{DM}.

\begin{lem}
\label{algebraicfundation}
If $(R,w)$ is a graded root, then a grading preserving homomorphism
$f\colon \mathbb{H}^-(R,w) \to \mathbb{H}^-(R,w)$ lifts, uniquely up
to chain homotopy, to a grading preserving chain map $f_\#\colon
C(R) \to C(R)$.  In addition, if $C$ is a free chain complex
with $H_*(C)\simeq \mathbb{H}^-(R,w)$ then $C\simeq C(R)$. \qed
\end{lem}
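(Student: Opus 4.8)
The plan is to exploit the very explicit shape of the model complex $(C(R),\partial)$. Recall its structure: $C(R)$ is free of finite rank over $\F[U]$ on two kinds of homogeneous generators --- the \emph{even} generators (the leaves $v_l$, with $\partial v_l=0$ and gradings $w(v_l)$), and the \emph{odd} generators (the angles $a$, with $\partial a=U^{m+1}v-U^{n+1}w$ whenever $a=U^mv-U^nw$, declared to have degree $w(a)+1$). Two elementary observations will drive everything. First, $\partial\bigl(C(R)\bigr)\subseteq U\cdot C(R)$, so the differential is ``reduced''; in particular every element of $C_{\rm even}$ is a cycle, and since $C_{\rm even}=\bigoplus_l\F[U]\langle v_l\rangle$ the classes $[v_l]$ generate $\mathbb{H}^-(R,w)\cong H_*(C(R))$ over $\F[U]$. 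Second, since $w$ changes by $2$ along each edge of the connected tree $R$, all values of $w$, hence all gradings of $\mathbb{H}^-(R,w)$, lie in a single residue class modulo $2$; as the odd generators of $C(R)$ sit in the opposite residue class, $H_*(C(R))$ carries nothing in that parity and therefore $\Ker\bigl(\partial|_{C_{\rm odd}}\bigr)=0$.

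For the second assertion --- that any free chain complex $C$ (of finite rank over $\F[U]$) with $H_*(C)\cong\mathbb{H}^-(R,w)$ satisfies $C\simeq C(R)$ --- I would directly build a quasi-isomorphism $\phi\colon C(R)\to C$ lifting a chosen graded isomorphism on homology. For each leaf $v_l$ pick a homogeneous cycle $z_l\in C$ of degree $w(v_l)$ representing the image of $[v_l]$ and set $\phi(v_l)=z_l$; for each angle $a=U^mv-U^nw$ observe that $\phi(\partial a)=U^{m+1}z_v-U^{n+1}z_w$ is a cycle whose class is the image of $[\partial a]=0$, hence a boundary, so one may choose a homogeneous $\phi(a)\in C$ of degree $w(a)+1$ with $\partial\phi(a)=\phi(\partial a)$; then extend $\phi$ by $\F[U]$-linearity. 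By construction $\phi$ is a grading preserving chain map realizing the given isomorphism on the even part of homology and the map $0\to 0$ on the odd part, hence a quasi-isomorphism. Since $C(R)$ and $C$ are free of finite rank over the principal ideal domain $\F[U]$, the mapping cone $\Cone(\phi)$ is acyclic and free of finite rank; the surjection $\Cone(\phi)\to\Imm\,\partial$ onto the (free, hence projective) image of its differential splits, producing a contracting homotopy, so $\phi$ is a chain homotopy equivalence and $C\simeq C(R)$.

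For the first assertion I would run the analogous construction with $C=C(R)$. Given a grading preserving $f\in{\rm End}_{\F[U]}\bigl(\mathbb{H}^-(R,w)\bigr)$, let $f_\#(v_l)$ be a homogeneous element of $C_{\rm even}$ representing $f([v_l])$, and for an angle $a=U^mv-U^nw$ let $f_\#(a)$ be a homogeneous element with $\partial f_\#(a)=U^{m+1}f_\#(v)-U^{n+1}f_\#(w)$ --- which exists because this element has homology class $f([\partial a])=0$ --- and extend $\F[U]$-linearly; this is a grading preserving chain map with $(f_\#)_*=f$. For uniqueness up to chain homotopy it suffices to show that a grading preserving chain map $g_\#\colon C(R)\to C(R)$ with $(g_\#)_*=0$ is null-homotopic. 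Define $H$ to be $\F[U]$-linear, zero on odd generators, and on each leaf $v$ equal to a homogeneous $H(v)\in C_{\rm odd}$ of degree $w(v)+1$ with $\partial H(v)=g_\#(v)$ (possible since $g_\#(v)$ is a cycle in $C_{\rm even}$ of vanishing class, hence a boundary). Then $\partial H+H\partial=g_\#$ reads $\partial H(v)=g_\#(v)$ on leaves, which holds by construction, and $H(\partial a)=g_\#(a)$ on an angle $a$; the latter follows because $\partial\bigl(H(\partial a)-g_\#(a)\bigr)=\bigl(U^{m+1}g_\#(v)-U^{n+1}g_\#(w)\bigr)-g_\#(\partial a)=0$ forces $H(\partial a)-g_\#(a)\in\Ker(\partial|_{C_{\rm odd}})=0$. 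Applying this to the difference of two lifts of $f$ gives uniqueness.

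I do not anticipate a genuine obstacle: the argument is elementary homological algebra over the graded principal ideal domain $\F[U]$ (Dai--Manolescu record the statement only as a remark). The two places that need a little care are bookkeeping: one must choose all cycles, primitives and homotopies homogeneously so that the resulting maps are grading preserving --- legitimate because $\F[U]$, all modules in sight, and all differentials are graded with $U$ homogeneous --- and one must invoke the single-parity observation about $\mathbb{H}^-(R,w)$, which is exactly what kills the only potential obstruction ($\Ker(\partial|_{C_{\rm odd}})$) in each of the three constructions above. As an alternative, both statements also follow from the classification of finite rank free dg $\F[U]$-modules up to chain homotopy equivalence --- each splits as a direct sum of towers $\F[U]$ and two-step pieces $\F[U]\xrightarrow{U^n}\F[U]$, determined by its homology --- but the direct route above is shorter.
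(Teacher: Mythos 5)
The paper states this lemma as a citation to Dai--Manolescu (it is introduced with ``The following was observed in \cite[Remark 4.3]{DM}'' and carries an immediate \textsc{qed}), so there is no internal proof to compare against; you are supplying the argument from scratch, and it is correct. The two structural observations you isolate are exactly the right ones: that $\partial\bigl(C(R)\bigr)\subseteq U\cdot C(R)$ with all even generators being cycles, and --- more importantly --- that the even and odd generators of $C(R)$ occupy opposite residue classes modulo $2$, so that once one knows $H_*(C(R))\cong\mathbb{H}^-(R,w)$ is concentrated in one parity it follows that $\Ker\bigl(\partial|_{C_{\rm odd}}\bigr)=0$. That vanishing is precisely the ingredient that kills the potential obstruction in each of your three constructions (building the lift $f_\#$, constructing the null-homotopy $H$, and building the quasi-isomorphism $\phi$). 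Your passage from quasi-isomorphism to chain homotopy equivalence via acyclicity and freeness of the mapping cone over the graded PID $\F[U]$ is standard and correct, and the parity/homogeneity bookkeeping you flag is handled properly throughout. Your closing remark that the statements also follow from the classification of finitely generated free dg $\F[U]$-modules as direct sums of free towers and two-step pieces $\F[U]\xrightarrow{U^n}\F[U]$ is a valid alternative and is closer in spirit to the standard structure-theorem viewpoint, but the direct argument you give is cleaner for this purpose.
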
 

If $k$ is a characteristic vector which restricts to a self-conjugate
$\spinc$ structure $\s_0 \in \Spinc(Y_\Gamma)$, then the graded root
$(R_\Gamma, w_k)$ comes with an involution $J\colon
\mathbb{H}^-(R,w)\to \mathbb{H}^-(R,w)$ \cite[Section 2.3]{DM}. This
is the map induced on $\mathbb{H}^-(R,w)$ by 
\begin{equation}\label{eq:invo}
\ell \mapsto -\ell-\frac{1}{2}PD(k),
\end{equation}
for $\ell \in H_2(X_\Gamma,\Z)$. We denote its lift to $C(R)$ by
$J_\#\colon C(R_\Gamma) \to C(R_\Gamma)$.

\begin{thm}\label{montesinoscomp}
Let $K=K_\Gamma$ be an arborescent knot associated to an
almost-rational plumbing tree $\Gamma$. Then there exists a chain
homotopy equivalence $(\CFm(\Sigma(K),\s_0),
\tau_\#)\simeq(C(R_\Gamma), J_\#)$ of $\iota$-complexes.
\end{thm}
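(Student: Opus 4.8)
The plan is to bootstrap from the isomorphism between lattice homology and Heegaard Floer homology (Theorem~\ref{isom}) to an equivalence of $\iota$-complexes, by checking that the algebraically defined involution $J_\#$ on the model complex $C(R_\Gamma)$ agrees, up to chain homotopy, with the topologically defined involution $\tau_\#$ on $\CFm(\Sigma(K),\s_0)$. First I would invoke Theorem~\ref{isom} for the almost-rational graph $\Gamma$ and the canonical characteristic vector $k$ restricting to $\s_0$: this gives $\mathbb{H}^-(R_\Gamma,w_k)\simeq\HFm(\Sigma(K),\s_0)$ as graded $\F[U]$-modules. By Lemma~\ref{algebraicfundation}, since $\CFm(\Sigma(K),\s_0)$ is a free, finitely generated chain complex over $\F[U]$ with homology $\mathbb{H}^-(R_\Gamma,w_k)$, we already get a chain homotopy equivalence $g\colon \CFm(\Sigma(K),\s_0)\to C(R_\Gamma)$ of the underlying complexes. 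So the entire content of the theorem is that $g$ intertwines $\tau_\#$ and $J_\#$ up to homotopy, i.e. $g\circ\tau_\#\simeq J_\#\circ g$.

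The key step is to identify the two involutions \emph{on homology}, where everything is a question about the graded root. On the Floer side, $\tau_*$ is induced by the covering involution $\tau$ on $\Sigma(K)$, which we may realize (Lemma~\ref{lem:PlumbingGraphForTorus}, or more generally the symmetry of the plumbing presentation of $\Sigma(K_\Gamma)$) by a symmetry of the plumbing graph; but in fact for an \emph{arbitrary} arborescent knot the relevant statement is the one from \cite{DM}: the conjugation-type involution $\ell\mapsto-\ell-\tfrac12 PD(k)$ of Equation~\eqref{eq:invo} induces, via the lattice-homology identification, precisely the map $\tau_*$ on $\HFm(\Sigma(K),\s_0)$. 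This is the analogue of the fact that on Heegaard Floer homology the conjugation symmetry corresponds to the $\spinc$-conjugation involution $J$; here the point is that for the double branched cover the covering involution $\tau$ \emph{is} the conjugation symmetry, because $\tau$ acts by $-1$ on $H_1(\Sigma(K);\Z)$ and fixes $\s_0$, so $\tau_*$ and the conjugation map agree. I would therefore cite \cite[Section~2.3]{DM} together with \cite{grigsby2006knot, levine2008computing} to conclude that $\tau_*$ on $\HFm(\Sigma(K),\s_0)$ coincides with the involution $J$ induced by \eqref{eq:invo}, hence with $(J_\#)_*$ on $\mathbb{H}^-(R_\Gamma,w_k)$.

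Finally I would upgrade this to the chain level. Both $\tau_\#$ and $J_\#$ are grading-preserving, $U$-equivariant chain maps, and by the previous paragraph the pushed-forward map $g\circ\tau_\#\circ g^{-1}$ and $J_\#$ induce the same map on $H_*(C(R_\Gamma))=\mathbb{H}^-(R_\Gamma,w_k)$; by the uniqueness clause of Lemma~\ref{algebraicfundation} (a grading-preserving endomorphism of $\mathbb{H}^-(R,w)$ lifts uniquely up to chain homotopy to $C(R)$), any two chain maps on $C(R_\Gamma)$ inducing the same map on homology are chain homotopic. Hence $g\circ\tau_\#\simeq J_\#\circ g$, and $(\CFm(\Sigma(K),\s_0),\tau_\#)\simeq(C(R_\Gamma),J_\#)$ as $\iota$-complexes. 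The main obstacle is the middle step: pinning down that the topological covering involution $\tau_*$ really is the conjugation-type symmetry $J$ on lattice homology, which requires knowing both that $\tau$ acts as $-1$ on $H_1$ (so $\tau$ is isotopic to an orientation-preserving symmetry realizing $\spinc$-conjugation on $\Sigma(K)$) and that the plumbing/lattice model is compatible with this symmetry; this is where I would lean most heavily on \cite{DM} and the cited computations of the $\Z/2\Z$-action on $\Spinc(\Sigma(K))$.
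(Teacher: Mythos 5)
Your outer scaffolding matches the paper: use Theorem~\ref{isom} for the graded-module isomorphism, then Lemma~\ref{algebraicfundation} to reduce to checking that $\tau_*$ agrees with $J$ on homology, and finally the uniqueness-of-lift clause to upgrade to a chain-level $\iota$-complex equivalence. The gap is in the middle step, and it is a real one.

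You claim that $\tau_*$ coincides with the conjugation involution $\iota_*$ (hence with $J$ under the Dai--Manolescu identification) ``because $\tau$ acts by $-1$ on $H_1(\Sigma(K);\Z)$ and fixes $\s_0$.'' This inference is not valid: a diffeomorphism's action on $H_1$ (and on $\Spinc$) does not determine its induced map on Heegaard Floer homology. The paper's own example in Subsection~\ref{ssec:example} is instructive. The two covering involutions $\phi_1$ (from $T_{3,7}$) and $\phi_2$ (from $P(2,-3,-7)$) on $\Sigma(2,3,7)$ both fix $\s_0$ and (trivially, since $H_1=0$) both act by $-1$ on $H_1$, yet by \cite{HLS} they induce \emph{different} maps on Floer homology --- one is the identity and the other is not --- so at most one of them can equal $\iota_*$. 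Thus there is no purely homological principle identifying the covering involution with the conjugation symmetry, and the references you invoke (\cite[Section~2.3]{DM}, \cite{grigsby2006knot,levine2008computing}) establish only that $J$ computes $\iota_*$ and that the $\Z/2\Z$-action on $\Spinc$ fixes $\s_0$; they do not establish $\tau_*=\iota_*$.

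The missing ingredient --- which is precisely where the hypothesis ``arborescent'' enters --- is a four-dimensional statement. The paper observes (following \cite{Saveliev}) that for the plumbing $X_\Gamma$ associated to $\Gamma$, the covering involution $\tau$ on $Y_\Gamma=\Sigma(K_\Gamma)$ extends smoothly over $X_\Gamma$ to complex conjugation $T\colon X_\Gamma\to X_\Gamma$, which acts on $\Spinc(X_\Gamma)$ as $\spinc$-conjugation, $T_*k=-k$. Unravelling the lattice-homology isomorphism in terms of the cobordism maps $F^-_{W_\Gamma,k}$ (as in \cite[Theorem~3.1]{HM}) reduces the identification of $\tau_*$ with $J$ to the identity $\tau_\#\circ F^-_{W_\Gamma,k}=F^-_{W_\Gamma,-k}$, which then follows from Lemma~\ref{lem:naturalitylemma} applied to the extending diffeomorphism $T$. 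Without this extension-over-the-filling argument your proof does not go through, and indeed would lead to the false conclusion $\phi_1=\iota_*$ if applied to the Brieskorn plumbing of $\Sigma(2,3,7)$ viewed as branched over $T_{3,7}$ (a non-arborescent presentation, and exactly the case where the conclusion fails).
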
 
\begin{proof}
Let $k_0\in H^2(X_\Gamma, \Z)$ be a characteristic vector of the
intersection lattice of $X_\Gamma$ which restricts to $\s_0$ on the
boundary. Theorem~\ref{isom} provides an isomorphism
\[H_*(\CFm(\Sigma(K),\s_0))=\HFm(\Sigma(K),\s_0)\simeq \mathbb{H}^-(R_\Gamma,w_{k_0})=H_*(C(R_\Gamma)) \ .\] 
As a consequence of Lemma \ref{algebraicfundation}, if we prove that
the push-forward of $J\colon \mathbb{H}^-(R_\Gamma,w_{k_0}) \to
\mathbb{H}^-(R_\Gamma,w_k)$ through this isomorphism agrees with
$\tau_*\colon \HFm(\Sigma(K),\s_0) \to \HFm(\Sigma(K),\s_0)$, we are
done.

Denote by $W_\Gamma$ the cobordism $S^3 \to Y_\Gamma$ obtained from
$X_\Gamma$ by removing the interior of a small ball $D^4 \subset
X_\Gamma$. Unravelling the definition of the isomorphism of Theorem~\ref{isom} as it was done
in \cite[Theorem 3.1]{HM}, the claim boils down to the identity
\[\tau_\# \circ F_{W_\Gamma, k}^-=F_{W_\Gamma, -k}^-  \ ,\]
where $k$ is a  characteristic vector which restricts to $\s_0$ on the
boundary. According to \cite{Saveliev} the covering involution $\tau
\colon Y_\Gamma\to Y_\Gamma$ extends over $X_\Gamma$ to the complex
conjugation $T\colon X_\Gamma \to X_\Gamma$. Since $T$ acts on
$\spinc$ structures as $\spinc$ conjugation,
Lemma~\ref{lem:naturalitylemma}  implies the
claimed identity.
\end{proof}

By applying similar arguments of \cite[Section 5]{DM} we get the
following results.

\begin{cor}
Let $K=K_\Gamma$ be an arborescent knot associated to  an
almost-rational plumbing tree $\Gamma$. Then there is an isomorphism
of graded $\F[U]$-modules $\HFB(K) \simeq \Ker \, (1+J)[-1]\oplus
{\rm{CoKer}}\, (1+J)$. Under this isomorphism the action of $Q$ on
$\HFB(K)$ is given by the projection $\Ker \, (1+J) \to
\Ker \, (1+J)/\Imm(1+J)\subset {\rm{CoKer}}\, (1+J)$. \qed
\end{cor}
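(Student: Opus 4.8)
The plan is to deduce this corollary directly from Theorem~\ref{montesinoscomp}, which identifies the $\iota$-complex $(\CFm(\Sigma(K),\s_0),\tau_\#)$ with the model complex $(C(R_\Gamma),J_\#)$. Since $\HFB(K)=H_*(\Cone(\tau_\#+\id))$ and chain homotopy equivalences of $\iota$-complexes induce isomorphisms on the homology of the associated mapping cones, it suffices to compute $H_*(\Cone(J_\#+\id))$ together with its $\F[U,Q]/(Q^2)$-module structure. So everything is reduced to a homological-algebra statement about the map $1+J$ on the finitely generated $\F[U]$-module $\mathbb{H}^-(R_\Gamma,w_{k_0})$.

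First I would set up the long exact sequence of the mapping cone. Writing $C=C(R_\Gamma)$ and $\iota=J_\#$, the cone complex is $C[-1]\oplus C$ with differential $\partial+Q(\iota+\id)$, and it sits in a short exact sequence $0\to C\to \Cone(\iota+\id)\to C[-1]\to 0$, giving the exact triangle of Equation~\eqref{eq:exact} with connecting map $(1+J)_*=1+J$ on $\HFm(\Sigma(K),\s_0)=\mathbb{H}^-(R_\Gamma,w_{k_0})$. From this triangle one reads off a short exact sequence
\[
0\longrightarrow {\rm{CoKer}}(1+J)\longrightarrow \HFB(K)\longrightarrow \Ker(1+J)[-1]\longrightarrow 0,
\]
where $j_*$ is grading preserving (so ${\rm{CoKer}}(1+J)$ sits in the same grading) and $p_*$ drops grading by one (accounting for the shift $[-1]$ on $\Ker(1+J)$). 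The key point making this sequence split as $\F[U]$-modules is that $\Ker(1+J)$ is a submodule of the free-plus-torsion module $\mathbb{H}^-$; one can lift a splitting either by exhibiting an explicit $\F[U]$-linear section, or more cleanly by noting — exactly as in \cite[Section 5]{DM} — that $J$ is a genuine involution ($J^2=\id$) on the model complex, so $1+J$ has a well-behaved image and kernel and the relevant $\mathrm{Tor}$ and $\mathrm{Ext}$ obstructions over $\F[U]$ vanish on the torsion-free part while the torsion part can be handled degree by degree.

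Next I would pin down the $Q$-action. On the cone complex $Q$ acts by the inclusion $C\hookrightarrow C[-1]\otimes\F[Q]/(Q^2)$ into the $Q$-summand, i.e. it sends the "$C[-1]$-part" to the "$C$-part" via the identity followed by the projection to homology. Tracing this through the identification of the sub/quotient of $\HFB(K)$ with ${\rm{CoKer}}(1+J)$ and $\Ker(1+J)$ shows that $Q$ annihilates ${\rm{CoKer}}(1+J)$ and sends a class in $\Ker(1+J)$ to its image under the composite $\Ker(1+J)\twoheadrightarrow \Ker(1+J)/\Imm(1+J)\hookrightarrow {\rm{CoKer}}(1+J)$, where the last inclusion is induced by $\Ker(1+J)\subset C \twoheadrightarrow {\rm{CoKer}}(1+J)$ (note $\Imm(1+J)\subset\Ker(1+J)$ since $(1+J)^2=1+2J+J^2=0$ in characteristic two). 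This is precisely the asserted description of the $Q$-action.

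The main obstacle I expect is the $\F[U]$-module splitting and the precise bookkeeping of gradings in it: one must verify that the extension $0\to {\rm{CoKer}}(1+J)\to \HFB(K)\to \Ker(1+J)[-1]\to 0$ really splits compatibly with the $\Q$-grading, and that no hidden grading shift is introduced beyond the single $[-1]$. Since $\HFm(\Sigma(K),\s_0)$ has rank one, ${\rm{CoKer}}(1+J)$ contributes one free tower (shifted to $\overline{\delta}$-level after the computation of \cite[Section 5]{DM}) and $\Ker(1+J)[-1]$ contributes the other (at $\underline{\delta}+1$), matching the earlier $\F[U]$-module description of $\HFB(K)$; checking this consistency is a sanity check but also the place where a careless argument could go wrong. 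Once the splitting and the $Q$-action are established, the corollary follows immediately, and I would remark that the same input reproves Theorems~\ref{thm:TauBars} and Proposition~\ref{prop:Condition} in the arborescent case.
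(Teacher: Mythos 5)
Your overall strategy — reduce to the model complex via Theorem~\ref{montesinoscomp}, use the fact that a chain homotopy equivalence of $\iota$-complexes induces a quasi-isomorphism of cones, and then read $\HFB(K)$ off the long exact sequence of the mapping cone — is the right one and matches the route the paper indicates by citing \cite[Section 5]{DM}. Your identification of the $Q$-action from the two-step filtration of the cone is also essentially correct, and you rightly observe that $\Imm(1+J)\subset\Ker(1+J)$ because $(1+J)^2=0$ over $\F$.

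The genuine gap is your justification of the splitting of
\[
0\longrightarrow \mathrm{CoKer}(1+J)\longrightarrow \HFB(K)\longrightarrow \Ker(1+J)[-1]\longrightarrow 0
\]
as graded $\F[U]$-modules. Invoking vanishing of Tor/Ext obstructions and ``handling the torsion degree by degree'' is not an argument — a short exact sequence of graded $\F[U]$-modules with $U$-torsion on both ends can perfectly well fail to split. The reason it splits here is much more concrete and you should state it: a graded root has $w(x_2)=w(x_1)-2$ along each edge, so \emph{all} vertices of $R_\Gamma$ lie in a single coset of $2\Z$ in $\Q$, and hence $\mathbb{H}^-(R_\Gamma,w_{k_0})\cong\HFm(\Sigma(K),\s_0)$ is concentrated in a single mod-$2$ grading class. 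Therefore $\mathrm{CoKer}(1+J)$ lives entirely in that class, $\Ker(1+J)[-1]$ lives entirely in the complementary class, and since $\deg U=-2$ preserves the two classes, the decomposition of $\HFB(K)$ by parity of the $\Q$-grading is already a direct sum decomposition of $\F[U]$-modules that realizes the splitting. With that in hand, the rest of your argument goes through, and the degree-$(-1)$ operator $Q$ necessarily maps the odd-parity summand $\Ker(1+J)[-1]$ into the even-parity summand $\mathrm{CoKer}(1+J)$ by the projection you describe, while annihilating $\mathrm{CoKer}(1+J)$ since $Q^2=0$.
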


\begin{cor}
Let $K=K_\Gamma$ be an arborescent knot associated to an
almost-rational plumbing tree $\Gamma$. Then $\deltaover(K)=\delta(K)$
and $\deltaunder(K)=-\sigma(K)/4$, where $\sigma(K)$ denotes the
signature of $K$.
\end{cor}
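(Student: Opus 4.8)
The plan is to deduce both identities from Theorem~\ref{montesinoscomp}, which identifies the $\iota$-complex $(\CFm(\Sigma(K),\s_0),\tau_\#)$ with the model complex $(C(R_\Gamma),J_\#)$, together with the corresponding statements about graded roots established by Dai--Manolescu. First I would recall from Theorem~\ref{thm:TauBars} and the discussion following the exact triangle that $\overline{\delta}(K)$ is read off from the top tower of $H_*(\Cone(\tau_\#^{\Imm f}+\id))$; using the identification with the model complex, this is the top tower of $H_*(\Cone(J_\#^{\Imm f}+\id))$ on the image of a maximal self-local equivalence. Since a graded root is supported in even degrees, $\HFm(\Sigma(K),\s_0)$ has no odd part, so the $+1$-shifted copy $\Field[U]_{(\underline\delta+1)}$ lives in odd degree and the copy $\Field[U]_{(\overline\delta)}$ in even degree. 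The even tower is unaffected by passing to the cone of $J_\#+\id$ in the sense that its bottom grading coincides with the $d$-invariant of $(\Sigma(K),\s_0)$; concretely, $J_\#$ fixes the bottom generator of the $U$-tower (any grading-preserving endomorphism of $\Field[U]$ in even degrees acting on the tower is the identity near the bottom), so $(1+J_\#)$ kills it and it survives to $\HFB(K)$ in the same grading. Hence $\overline\delta(K)=\delta(K)$.

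For the second identity I would invoke the fact, due to Dai--Manolescu in the symmetric graded root setting, that $\underline\delta$ of a symmetric graded root with involution $J$ equals $-\sigma/4$, where $\sigma$ is the relevant signature-type quantity. The cleaner route here is: by Lemma~\ref{lem:mirror} we have $\underline\delta(K)=-\overline\delta(-K)$, and $-K=K_{\Gamma'}$ where $\Gamma'$ is obtained from $\Gamma$ by reversing all signs; if $\Gamma$ is almost-rational then $\Gamma'$ is "almost anti-rational" and $-\Gamma'$ negative definite, so that the first identity applies to give $\overline\delta(-K)=\delta(-K)=-d(\Sigma(K),\s_0)$ computed from $-\Gamma$. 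One then has to match $d$ of the reversed plumbing with $\sigma(K)/4$; this is exactly the Ozsv\'ath--Szab\'o computation relating the correction term of the double branched cover with the knot signature (as in their absolutely graded paper), which identifies $\delta(K)$ for alternating/plumbed cases. Alternatively, one argues directly on the graded root: the minimal $\chi_k$-value vertex is $J$-invariant (this is the content of the argument inside the proof of Lemma~\ref{lem:IdentifOfJforpqeven}), and the grading $w_k$ of that vertex is $(k^2+|\Gamma|)/4$; evaluating $k^2+|\Gamma|$ for the canonical characteristic vector and comparing with the signature of the plumbing (and hence of $K$ via the Murasugi-sum surface $F_\Gamma$) gives the value $-\sigma(K)/4$ for the bottom of the $J$-invariant tower in the cone, which is precisely $\underline\delta(K)$.

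I would organize the write-up in two short steps: Step 1, show $\overline\delta(K)=\delta(K)$ by noting that $\HFm(\Sigma(K),\s_0)$ is supported in even gradings (graded root), so $\tau_\#$ acts as the identity on the bottom of the $U$-tower, whence $(1+\tau_\#)$ annihilates it and it contributes the even $U$-tower of $\HFB(K)$, pinning $\overline\delta(K)$ to $d(\Sigma(K),\s_0)=\delta(K)$; Step 2, identify $\underline\delta(K)$ with $-\sigma(K)/4$ via the explicit grading $w_k$ of the $J$-invariant minimum vertex of the graded root, using $w_k(x_{\min})=(k^2+|\Gamma|)/4$ for the canonical $k$ and the equality of $k^2+|\Gamma|$ with (minus) the signature of the plumbing four-manifold $X_\Gamma$, which equals the signature of $K$ since $F_\Gamma\hookrightarrow S^3$ is a Seifert surface realizing $X_\Gamma$ as the double branched cover of $D^4$.

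The main obstacle I anticipate is Step 2: making the signature bookkeeping precise. One must be careful that the canonical characteristic vector indeed restricts to $\s_0$, that the $J$-invariant vertex found in the proof of Lemma~\ref{lem:IdentifOfJforpqeven} is genuinely the global minimum of $\chi_k$ (not merely a local one), and that the standard formula relating $d$-invariants of plumbed three-manifolds with $(k^2+|\Gamma|)/4$ combines correctly with $\sigma(X_\Gamma)=-|\Gamma|$ (valid since $X_\Gamma$ is negative definite after the almost-rational modification) and with $\sigma(K)=\sigma(X_\Gamma)$ via Montesinos' trick. All the needed inputs are available from \cite{DM} and \cite{ManOwens, ozsvath2003absolutely}, so the argument is essentially a matter of assembling them in the right order rather than proving anything new.
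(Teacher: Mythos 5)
Your Step 1 is essentially the content of the argument the paper cites from \cite{HM} (their Theorem 1.2): since $\tau_\#$ is grading-preserving and $U$-equivariant it must act trivially on the top of the $U$-tower of $\HFm(\Sigma(K),\s_0)$, so the tower survives into the mapping cone, yielding $\overline{\delta}(K)=d(\Sigma(K),\s_0)=\delta(K)$. This matches the paper.

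Your Step 2, however, has real gaps in both proposed routes, and they are worth spelling out. For the mirror route: from $\underline{\delta}(K)=-\overline{\delta}(-K)$ you would need to apply the first identity to $-K$, but $-K$ is associated to the sign-reversed tree $\Gamma'$, whose plumbing four-manifold is \emph{positive} definite; there is no negative-definite almost-rational graph for $-K$ in this setup, so $\overline{\delta}(-K)=\delta(-K)$ simply does not follow from Step 1. In fact, were this route valid it would give $\underline{\delta}(K)=-\delta(-K)=\delta(K)=\overline{\delta}(K)$ for \emph{every} arborescent knot with almost-rational $\Gamma$, forcing $\HFB_{\rm{red\kotojel conn}}(K)=0$ always — which is contradicted by the paper's computation for $P(2,-3,-q)$ in Lemma~\ref{lem:CalcForPretzel}. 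For the direct graded-root route: the $J$-invariant vertex realizing the minimum of $\chi_k$ is the top of the infinite stem of $R_\Gamma$, and its weight $w_k$ is precisely $d(\Sigma(K),\s_0)$; this again pins down $\overline{\delta}$, not $\underline{\delta}$. (Also note $w_k(x)=\tfrac{k^2+|\Gamma|}{4}-2n$ for $x\in S_n$, not $\tfrac{k^2+|\Gamma|}{4}$.) Extracting $\underline{\delta}$ from the graded root is genuinely more involved: one has to analyse the $J$-action on the monotone subroot, not merely a single extremal vertex. The paper's actual argument is shorter and different: using Theorem~\ref{montesinoscomp} to transport the problem to the Dai--Manolescu $(C(R_\Gamma),J_\#)$ model, one reads off $\underline{\delta}(K)=-2\,\overline{\mu}(\Gamma,\s_0)$ from \cite{DM} (with $\overline{\mu}$ the Neumann--Siebenmann invariant), and then invokes Saveliev's theorem $\sigma(K)=8\,\overline{\mu}(\Gamma,\s_0)$ to conclude. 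Your opening sentence does gesture at ``the fact, due to Dai--Manolescu,'' but the concrete arguments you then develop to justify it do not hold up; the Saveliev input $\sigma(K)=8\overline{\mu}$ is the missing bridge and should be cited explicitly rather than replaced by the mirror or direct-grading arguments.
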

\begin{proof}
Following the argument of \cite[Theorem 1.2]{HM} we can identify
$\deltaover(K)$ with the Ozsv\'ath-Szab\'o correction term of the
double branched cover $d(\Sigma(K), \s_0)=\delta(K)$. Furthermore,
$\deltaunder(K)=-2 \cdot \overline{\mu}(\Gamma, \s_0)$ where
$\overline{\mu}(\Gamma, \s_0)$ denotes the Neumann-Siebenmann
$\overline{\mu}$-invariant of the plumbing $\Gamma$ in the spin
structure $\s_0$. On the other hand, according to \cite[Theorem
  5]{SavelievMubar} we have $\sigma(K)=8\cdot \overline{\mu}(\Gamma,
\s_0)$ thus $\deltaunder(K)=-\sigma(K)/4$.
\end{proof}

Note that since a star-shaped plumbing tree $\Gamma$ is
almost-rational, for Montesinos knots the assumption of the above
results modifies to demand that the intersection form of $X_{\Gamma}$
(equivalently, the inertia matrix of the plumbing graph $\Gamma$) is
negative definite. This method of approaching computability questions
will be utilized in the next section.

The connected group associated to the $\iota$-complex $(C(R), J_\#)$
of a graded root $(R,w)$ can be easily computed. Given a vertex $v \in
R$ denote by $C(v)$ the set of all leaves of $R$ that are connected to
$v$ by an oriented path. We construct a subset $S$ of the leaves of
$R$ by the following algorithm.  Let $v_0$ denote the $J$-invariant
vertex $v_0$ of $R$ with highest weight.  If $C(v_0)$ consists of only
one vertex, we add it to $S$; otherwise, we can find a pair $\{v,
Jv\}$ in $C(v_0)$ and in this case we add both $v$ and $Jv$ to
$S$. Next consider the path $\gamma$ connecting $v_0$ to
infinity. Take $v_1 \in R$ to be the first vertex along $\gamma$ for
which $C(v_0) \subsetneq C(v_1)$. If $C(v_1)$ contains a pair of
leaves $\{v, Jv\}$ with weight larger than the weight of any leaf in
$S$ then we choose one such pair with largest possible weight and we
add it to $S$. By keep iterating this procedure until $\gamma$ merges
with the long stem we end up with a set $S$ of distiguished leaves. We
denote by $M$ the smallest graded root $M \subset R$ containing
$S$. According to \cite[Proposition 7.5]{HHL} we have that
$\HFB_\text{conn}(K)=\mathbb{H}^-(M,w|_M)$.  The resulting $M$ is the
\emph{monotone subroot} of $(R, w)$.

\section{Some independence results}
\label{sec:IndepResults}
The following observation is a simple corollary of the concluding
statements of Section~\ref{sec:ConcordanceInv}.

\begin{cor}
\label{cor:generate}
Suppose that ${\mathcal {F}}=\{ K_i\mid i\in I\}$ is a family of knots
with $\HFB_{\rm{red\kotojel conn}}(K)=0$ for all $K\in {\mathcal
  {F}}$.  If $\langle {\mathcal {F}}\rangle $ denotes the subgroup of
${\mathcal {C}}$ generated the equivalences of $K_i\in {\mathcal
  {F}}$, then $\HFB_{\rm{red\kotojel conn}}(K)=0$ for all $K\in
\langle {\mathcal {F}}\rangle$.
\end{cor}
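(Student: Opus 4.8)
The plan is to reduce everything to the behavior of $\deltaover$ and $\deltaunder$ under connected sum, using the characterization from Proposition~\ref{prop:Condition}. First I would observe that for a knot $K\in\langle\mathcal F\rangle$, by definition $K$ is concordant to a connected sum $K_1^{\pm}\#\cdots\#K_n^{\pm}$ where each $K_j\in\mathcal F$ and the superscript denotes the knot or its mirror. Since all the invariants in play are concordance invariants (Theorems~\ref{thm:TauBars} and~\ref{thm:ConnKnot}), it suffices to prove the vanishing for such a connected sum.

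\smallskip

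Next I would assemble the relevant algebra. By hypothesis $\HFB_{\rm{red\kotojel conn}}(K_j)=0$, so Proposition~\ref{prop:Condition} gives $\deltaover(K_j)=\deltaunder(K_j)=\delta(K_j)$ for each $j$. For the mirrors, Lemma~\ref{lem:mirror} gives $\deltaunder(-K_j)=-\deltaover(K_j)$ and (applying it to $-K_j$, together with $\delta(-K_j)=-\delta(K_j)$) also $\deltaover(-K_j)=-\deltaunder(K_j)$; hence $-K_j$ again satisfies $\deltaover(-K_j)=\deltaunder(-K_j)=\delta(-K_j)$. So without loss of generality every summand $L$ in our connected sum satisfies $\deltaover(L)=\deltaunder(L)=\delta(L)$. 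Now I would apply Lemma~\ref{lem:Sums} inductively: writing $L=L_1\#\cdots\#L_n$, repeated use of the chain of inequalities in~\eqref{ineq} yields
\[
\sum_{j=1}^n \deltaunder(L_j)\ \leq\ \deltaunder(L)\ \leq\ \deltaover(L)\ \leq\ \sum_{j=1}^n \deltaover(L_j).
\]
Since $\deltaunder(L_j)=\deltaover(L_j)$ for every $j$, the outer terms coincide, forcing $\deltaunder(L)=\deltaover(L)$.

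\smallskip

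Finally, $\delta(L)$ is sandwiched between $\deltaunder(L)$ and $\deltaover(L)$ (as noted after Equation~\eqref{eq:exact}), so the equality $\deltaunder(L)=\deltaover(L)$ forces $\deltaover(L)=\deltaunder(L)=\delta(L)$ as well. Applying Proposition~\ref{prop:Condition} once more, $\HFB_{\rm{red\kotojel conn}}(L)=0$, and by concordance invariance $\HFB_{\rm{red\kotojel conn}}(K)=0$ for the original $K\in\langle\mathcal F\rangle$. I do not anticipate a serious obstacle here: the only point requiring mild care is the bookkeeping for mirrors (making sure both $\deltaover$ and $\deltaunder$ of $-K_j$ collapse to $\delta(-K_j)$, which follows from two applications of Lemma~\ref{lem:mirror}), and the observation that $\delta$ is additive under connected sum—or, more economically, simply that it lies between the two $\delta$-bars—so that no independent computation of $\delta(L)$ is needed.
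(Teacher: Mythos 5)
Your proof is correct and follows exactly the route the paper takes: the paper's proof of Corollary~\ref{cor:generate} simply cites Lemmas~\ref{lem:mirror} and~\ref{lem:Sums} and Proposition~\ref{prop:Condition}, and your write-up spells out precisely how those three facts combine (reduce to connected sums of $K_j^{\pm}$, use the mirror lemma to handle the $\pm$, use subadditivity of $\deltaover$ and superadditivity of $\deltaunder$ to sandwich, then use the sandwich $\deltaunder \leq \delta \leq \deltaover$ and Proposition~\ref{prop:Condition} to conclude). One tiny remark: the step $\deltaover(-K_j)=-\deltaunder(K_j)$ is just Lemma~\ref{lem:mirror} applied with $K=K_j$ and does not require the identity $\delta(-K_j)=-\delta(K_j)$; as you yourself note at the end, one never needs $\delta$-additivity, only that $\delta$ is pinned between the two bars.
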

\begin{proof}
Lemmas~\ref{lem:mirror} and \ref{lem:Sums}, together with the
equivalence provided by Proposition~\ref{prop:Condition} imply the
result.
\end{proof}

Recall the definition of the subgroups ${\mathcal {QA}}$ and $
{\mathcal {T}}$ (generated by quasi-alternating and torus knots,
respectively) of the smooth concordance group ${\mathcal {C}}$ from
Section~\ref{sec:intro}.
\begin{prop}
\label{prop:TotalVanishing}
For $[K]\in {\mathcal {QA}}+{\mathcal {T}}$ we have that 
$\HFB _{\rm{red\kotojel conn}}(K)=0$.
\end{prop}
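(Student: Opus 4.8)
The plan is to combine the two vanishing results already proved---Theorem~\ref{thm:AltAndQuasiAlt} for quasi-alternating knots and Theorem~\ref{thm:TorusVanishing} for torus knots---with the closure property established in Corollary~\ref{cor:generate}. The point is that Corollary~\ref{cor:generate} applies verbatim to \emph{any} family $\mathcal{F}$ of knots with vanishing reduced connected homology, producing vanishing for the entire subgroup $\langle\mathcal{F}\rangle$ of $\mathcal{C}$ generated by the concordance classes of members of $\mathcal{F}$.

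First I would take $\mathcal{F}$ to be the union of the set of all quasi-alternating knots and the set of all torus knots. By Theorem~\ref{thm:AltAndQuasiAlt}, every quasi-alternating knot $K$ has $\HFB_{\rm{red\kotojel conn}}(K)=0$; in fact that theorem even gives vanishing for anything concordant to a quasi-alternating knot, which is all we need. By Theorem~\ref{thm:TorusVanishing}, every torus knot $T_{p,q}$ satisfies $\HFB_{\rm{red\kotojel conn}}(T_{p,q})=0$. Hence every member of $\mathcal{F}$ has trivial reduced connected Floer homology, so the hypothesis of Corollary~\ref{cor:generate} is satisfied.

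Next I would observe that $\langle\mathcal{F}\rangle$ is precisely the subgroup ${\mathcal {QA}}+{\mathcal {T}}$: by definition ${\mathcal {QA}}$ is generated by the concordance classes of quasi-alternating knots, ${\mathcal {T}}$ by those of torus knots, and their sum inside the abelian group $\mathcal{C}$ is exactly the subgroup generated by the union of the two generating sets, i.e.\ $\langle\mathcal{F}\rangle$. Therefore Corollary~\ref{cor:generate} yields $\HFB_{\rm{red\kotojel conn}}(K)=0$ for every $K$ with $[K]\in{\mathcal {QA}}+{\mathcal {T}}$, which is the assertion.

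There is essentially no obstacle here: the statement is a formal consequence of the earlier results, and the only thing to be careful about is the bookkeeping that $\HFB_{\rm{red\kotojel conn}}$ is genuinely a concordance invariant (Theorems~\ref{thm:TauBars} and \ref{thm:ConnKnot}, via Proposition~\ref{prop:Condition}), so that vanishing on a generating set of concordance classes propagates through connected sums and mirrors---but this is exactly the content of Corollary~\ref{cor:generate} and its proof via Lemmas~\ref{lem:mirror} and \ref{lem:Sums}. So the proof is just the two sentences: members of $\mathcal{F} = \{\text{quasi-alternating knots}\}\cup\{\text{torus knots}\}$ have vanishing reduced connected homology by Theorems~\ref{thm:AltAndQuasiAlt} and \ref{thm:TorusVanishing}, and since ${\mathcal {QA}}+{\mathcal {T}}=\langle\mathcal{F}\rangle$, Corollary~\ref{cor:generate} finishes it.
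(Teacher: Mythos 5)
Your proof is correct and follows exactly the same route as the paper's: apply Theorems~\ref{thm:AltAndQuasiAlt} and~\ref{thm:TorusVanishing} to see that every knot in the generating family (quasi-alternating knots together with torus knots) has vanishing reduced connected homology, then invoke Corollary~\ref{cor:generate} to propagate this to the generated subgroup ${\mathcal{QA}}+{\mathcal{T}}$. No differences of substance.
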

\begin{proof}
In Section~\ref{sec:Vanishing} we showed that $\HFB _{\rm{red\kotojel
    conn}} (K)=0$ once $K$ is either a quasi-alternating knot or a
torus knot. Application of Corollary~\ref{cor:generate} concludes
the argument.
\end{proof}

Based on this proposition, nonvanishing results 
for $\HFB _{\rm{red\kotojel conn}}$
then immediately imply nonvanishing in the quotient group
${\mathcal {C}}/({\mathcal {QA}}+{\mathcal {T}})$. 

Recall that in \cite{LidmanMoore} Lidman and Moore showed that a
pretzel knot $P(a_1, \dots, a_k)$ is an $L$-space knot (\text{i.e.} it
has $L$-space surgeries) if and only if it is a torus knot of the form
$T_{2,2n+1}$, or a pretzel knot of the form $P(-2,3,q)$ for some
$q\geq 7$ odd.  In \cite{Wang} the question of which linear
combinations of $L$-space knots is concordant to a linear combination
of algebraic knots was studied. In \cite[Theorem 1.1]{alfieri1} the first author showed that
pretzel knots of the form $P (-2, 3, q)$ are not concordant to positive sums of
algebraic knots. Note that for these knots the obstruction found in
\cite[Corollary 3.5]{Wang} vanishes. By computing connected  Floer homology now we will prove
Theorem~\ref{thm:pretzels}.

Let $C=(C_0, \partial _0, \iota _0)$ denote the $\iota$-complex where 
$C_0$ (as an $\Field [U]$-module) is generated by three
generators $a,b,c$, the boundary map  $\partial _0$ is given by 
$\partial_0 a=\partial_0 b=0$ and $\partial_0 c=Ua+Ub$ and $\iota _0(a)=b, 
\iota _0(b)=a, \iota _0(c)=c$. Define the grading by $\gr (a)=\gr (b)=0$ 
and $\gr (c)=-1$, and denote by $C[r]$ the same chain complex with grading
shifted by $r\in \Q$. (The same chain complex has been denoted
by ${\mathcal {C}}_1$ in \cite[Subsection~7.1]{HHL}.)

\begin{lem}
\label{lem:CalcForPretzel}
Let $q\ge7$ be any odd integer. Then 
$(\CFm (\Sigma  (2,-3,-q), \s _0), \inv _\#)$ is locally equivalent to
$C[r]$ for some $r$. In particular, $\HFB _{\rm{red\kotojel conn}}
(\Sigma (2, -3, -q))=\Field$.
\end{lem}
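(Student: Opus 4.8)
The goal is to show that the $\iota$-complex $(\CFm(\Sigma(2,-3,-q),\s_0),\inv_\#)$ is locally equivalent to $C[r]$, after which the description of $\HFB_{\rm{red\kotojel conn}}$ as the reduced connected homology of the monotone subroot (from Section~\ref{sec:Montesinos}) immediately yields $\HFB_{\rm{red\kotojel conn}}(\Sigma(2,-3,-q))=\Field$, since the monotone subroot of $C[r]$ is readily seen to contribute a single $\Field$ in reduced homology (this is exactly the computation of ${\mathcal C}_1$ in \cite[Subsection~7.1]{HHL}). So the content is entirely the local equivalence statement. First I would recall that $P(2,-3,-q)$ is an arborescent knot whose associated plumbing tree $\Gamma$ is star-shaped, hence almost-rational, and that the intersection form is negative definite for $q\ge 7$ odd; thus Theorem~\ref{montesinoscomp} applies and gives a chain homotopy equivalence $(\CFm(\Sigma(2,-3,-q),\s_0),\inv_\#)\simeq (C(R_\Gamma),J_\#)$ of $\iota$-complexes. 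This reduces the problem to a purely combinatorial computation: identify the graded root $(R_\Gamma,w_{k_0})$ for the canonical characteristic vector (or any vector restricting to $\s_0$), identify the involution $J$ on it coming from Equation~\eqref{eq:invo}, and show that the resulting $\iota$-complex $(C(R_\Gamma),J_\#)$ is locally equivalent to $C[r]$.

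\textbf{Key steps.} The graded root of $\Sigma(2,3,q)$ (equivalently of the Brieskorn sphere, here with reversed orientation conventions built into $\Sigma(2,-3,-q)$) is well known from the lattice-homology literature, and for $q\ge 7$ odd it has the feature that $\HFm$ has rank one and a single tower plus a small amount of $U$-torsion; the relevant graded root is the one with essentially two branches meeting at the distinguished level, one of which carries the extra tower generator. Concretely, I would show that $R_\Gamma$ is locally equivalent, as a symmetric graded root, to the graded root of the model complex $C[r]$: it has a $J$-invariant vertex $v_0$ of top relevant weight whose set $C(v_0)$ of descendant leaves consists of a single $J$-fixed leaf (corresponding to $c$), together with a pair $\{v,Jv\}$ of leaves (corresponding to $a,b$) appearing at a strictly lower weight, swapped by $J$. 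The differential structure of the model complex — $\partial a=\partial b=0$, $\partial c=Ua+Ub$ — is precisely the angle relation in $C(R)$ between the even generators coming from the $a,b$-leaves and the odd generator coming from the valence-$3$ vertex, after pruning. To make this rigorous I would use the same pruning lemma already employed in the proof of Lemma~\ref{lem:IdentifOfJforpqeven}: one repeatedly deletes $J$-paired leaves $\{v,Jv\}$ together with their angles, via an explicit $\Field[U]$-linear, $\Z/2\Z$-equivariant chain map that is a local equivalence onto its image, until one is left with exactly the three generators $a,b,c$ and the stated differential.

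\textbf{The main obstacle.} The delicate point is not the algebraic pruning — that machinery is already in place — but pinning down the precise shape of the graded root $(R_\Gamma,w_{k_0})$ for $P(2,-3,-q)$ in the spin structure $\s_0$, and in particular verifying that, after symmetric pruning, exactly one $J$-swapped pair of leaves survives below the top $J$-invariant vertex (giving $\HFB_{\rm{red\kotojel conn}}=\Field$ rather than $0$ or something larger). This amounts to a finite but genuine computation with the quadratic function $\chi_{k_0}$ on the lattice $\Z^{|\Gamma|}$ — one must compute the minimal sublevel sets, determine how their connected components merge as the level increases, and track the $\Z/2\Z$-action throughout. I would carry this out by using the known Heegaard Floer homology of $\Sigma(2,3,q)$ (so the rank and the single torsion $\Field$ summand are fixed in advance, forcing the abstract shape of $R_\Gamma$), and then invoke the uniqueness clause of Lemma~\ref{algebraicfundation} to conclude that $(C(R_\Gamma),J_\#)$ is chain homotopy equivalent to the explicit model $(C_0,\partial_0,\iota_0)$ up to grading shift, hence locally equivalent to $C[r]$. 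The identification of the exact shift $r$ (which is $d(\Sigma(2,-3,-q),\s_0)=\delta(P(2,-3,-q))$, up to the grading conventions of the model complex) is immaterial for the conclusion about $\HFB_{\rm{red\kotojel conn}}$, so I would not belabor it.
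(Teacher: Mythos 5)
Your high-level reduction is the same as the paper's: apply Theorem~\ref{montesinoscomp} to the negative-definite star-shaped plumbing for $P(2,-3,-q)$ and replace the Floer-theoretic $\iota$-complex with $(C(R_\Gamma), J_\#)$. After that point, however, the argument you outline has a genuine gap and in places would actually lead to the wrong answer.

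The central problem is the claim that ``the known Heegaard Floer homology of $\Sigma(2,3,q)$ \dots forc[es] the abstract shape of $R_\Gamma$'' and that the uniqueness clause of Lemma~\ref{algebraicfundation} then pins down the $\iota$-complex. Lemma~\ref{algebraicfundation} says that a given map on $\mathbb{H}^-(R,w)$ lifts uniquely up to chain homotopy, and that a free complex with this homology is chain homotopy equivalent to $C(R)$; it says nothing about what the involution $J$ \emph{is}. Knowing $\HFm(\Sigma(K),\s_0)\cong\Field[U]\oplus\Field$ determines the graded root, namely two leaves in a single top degree merging one level below into an infinite stem (no extra $J$-fixed leaf above them, contrary to the picture you describe with ``a single $J$-fixed leaf (corresponding to $c$), together with a pair $\{v,Jv\}$ \dots at a strictly lower weight''; $c$ is the odd angle generator in $C(R)$, not a leaf). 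But this data is compatible with \emph{two} involutions: the one swapping the two leaves, which gives $C[r]$ and hence $\HFB_{\rm{red\kotojel conn}}=\Field$, and the trivial one, which gives $\HFB_{\rm{red\kotojel conn}}=0$. Distinguishing them is exactly what the lemma is about, and nothing in the homology-level argument does it. Indeed the example in Subsection~\ref{ssec:example} already exhibits the same underlying three-manifold $\Sigma(2,3,7)$ with two different involutions acting differently on Floer homology, so this ambiguity is real, not hypothetical.

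A second, related issue is the appeal to ``the same pruning lemma already employed in the proof of Lemma~\ref{lem:IdentifOfJforpqeven}.'' That pruning step requires a $\Z/2\Z$-invariant vertex of $R$ in the same grading as the non-invariant leaf one wants to delete, which for torus knots was established by exhibiting a $\Z/2\Z$-invariant lattice point minimizing $\chi_k$. If the pruning were applicable here it would terminate in a graded root with trivial $J$-action, yielding $\HFB_{\rm{red\kotojel conn}}=0$ — the opposite of the desired conclusion. The whole point in the pretzel case is that the pruning \emph{cannot} proceed: the two leaves at the top level are swapped by $J$ and there is no $J$-invariant vertex at that level to map them to. Your proposal does not notice this, and as written would prove too much.

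What the paper actually does, and what your argument is missing, is a concrete lattice computation: write out $\chi_{K_q}$ explicitly for the canonical characteristic vector, verify $\chi_{K_q}\geq 0$, compute $S_0$ and $S_1$ (by citing the $q=7$ computation in \cite[Subsection 3.2]{OS20}, where $S_0$ has two components and $S_1$ one), identify the two lattice vectors representing $S_0$, and use the formula $\ell\mapsto -\ell-\tfrac12 PD(k)$ of Equation~\eqref{eq:invo} to see directly that $J$ swaps them. The uniformity in $q$ is then handled by the identity $2\chi_{K_q}(x)=2\chi_{K_7}(x)+(q-7)(\delta^2-\delta)$, whose correction term is nonnegative, so the sublevel sets and hence the monotone subroot and $J$-action are unchanged for all odd $q\geq7$. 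This monotonicity trick is an essential ingredient that your proposal has no substitute for.
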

\begin{proof}
The pretzel knot $-P(-2,3,q)=P(2,-3,-q)$ is associated to the negative
definite star-shaped (hence almost-rational) plumbing tree $\Gamma _q$
shown in Figure~\ref{fig:23q}(a).  Let us label the vertices of
$\Gamma_{q}$ by $c,v_{1},v_{2},v_{3}$ as shown by
Figure~\ref{fig:23q}(a).
\begin{figure}[t]
\input{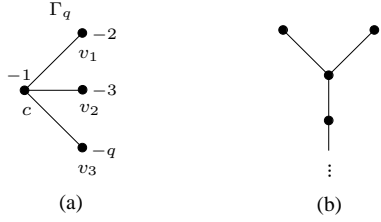}
\caption{On the left we give the plumbing graph $\Gamma _q$ giving
the pretzel knot $P(2,-3,-q)$. On the right the monotone subroot of
the associated graded root is shown.}
\label{fig:23q} 
\end{figure}
(Notice that by successively blowing down $c, v_1$ and $v_2$ we can see
that the three-manifold $Y_{\Gamma _q}$ defined by the plumbing is
the result of $(-q+6)$-surgery on the right-handed trefoil knot.)

In determining the connected Floer homology of these pretzel knots we
will appeal to the computational scheme through lattice homology (as
outlined in Section~\ref{sec:Montesinos} for appropriate arborescent
knots). Recall that the canonical characteristic vector $K_{q}\in H^2
(X_{\Gamma _q}; \Z )$ is the class satisfying $K_q(v)=-2-v^2$ for each
vertex (where a vertex $v$ is viewed as an element of $H_2(X_{\Gamma
  _q}; \Z )$ and $v^2$ denotes the weight on the vertex $v$, which is
equal to self-intersection of the corresponding homology class).  In
particular, $K_q(c)=-1, K_q(v_1)=0, K_q(v_2)=1$ and $K_q(v_3)=q-2$.
Every characteristic cohomology element determines a spin$^c$
structure on the boundary three-manifold; by following the blow-down
sequence described above with $K_q$ we see that it induces the unqiue
spin structure $\s _0$ on the boundary. For an element $x= \alpha
c+\beta v_{1}+\gamma v_{2}+\delta v_{3} \in H_2 (X _{\Gamma _q}; \Z )$
the quadratic function $\chi _{K_{q}}$ of
Equation~\eqref{eq:Quadratic}  is given by
\[
2\chi_{K_{q}}(\alpha c+\beta v_{1}+\gamma v_{2}+\delta v_{3})
=\alpha^{2}+2\beta^{2}+3\gamma^{2}+q\delta^{2}-
2\alpha(\beta+\gamma+\delta)+\alpha-\gamma-(q-2)\delta.
\]

It is not hard to see that $\chi _{K_q}(x) \geq 0$ for any $x\in H_2
(X_{\Gamma _q}; \Z )$. In order to determine the monotone subroot
$M_q$ of the graded root $(R_{K_q}, w_{K_q})$ it will be sufficient to
understand $S_0$ and $S_1$ (in the terminology of
Subsection~\ref{ssec:lattice}, i.e., $S_n=\{ \chi _{K_q}\leq n\}$).
For $q=7$ this calculation has been performed in
\cite[Subsection~3.2]{OS20}, where it has been shown that $\{ \chi
_{K_7}\leq 0\}$ has two components, while $\{ \chi _{K_7}\leq 1\}$ has
a single component. This shows that $M_7$ is of the form given by 
Figure~\ref{fig:23q}(b).

Notice, however, that for $x= \alpha c+\beta v_{1}+\gamma v_{2}+\delta
v_{3}$ we have $2\chi _{K_q}(x) =2\chi _{K_7}(x)+(q-7)(\delta
^2-\delta )$.  Since $\chi _{K_q}(x)\geq 0$ for all $x$, we have that
$S_0$ and $S_1$ have the same cardinality for all $q\geq 7$, hence the
monotone subroot will be the same for all $q$.

The explicit description of the two vectors representing $S_0$ (as
given in \cite{OS20}) and the description of the involution of
Equation~\eqref{eq:invo} now identifies the action of $\inv _{\#}$,
verifying the claim about the action of $\inv _\#$. Then the definition
of $\HFB_{\rm{conn}}(K)$ shows that in this case it is isomorphic to 
$\HFm (\Sigma (K), \s _0)$, and hence $\HFB _{\rm{red\kotojel conn}}(K)=
\Field$, completing the proof.  
\end{proof}
\begin{rem}
Indeed, $\HFa (\Sigma (P(2,-3,-q)))$ can be easily computed using the
surgery exact triangle (by viewing this three-manifold
as surgery along the trefoil knot).
This computation shows that $\dim \HFa
(\Sigma (P(2,-3,-q)), \s _0)=3$ (and all other spin$^c$ structures
have one-dimensional invariant), determining $\HFa (\Sigma
(P(2,-3,-q)), \s )$ as well, and showing that the local equivalence
claimed in Lemma~\ref{lem:CalcForPretzel} is, indeed, a chain homotopy
equivalence. Also, the gradings can be easily determined by computing
$K_q^2$. The detailed calculation in the proof of
Lemma~\ref{lem:CalcForPretzel} is crucial in the
identification of the involution $\inv _\#$.
\end{rem}

\begin{proof}[Proof of Theorem~\ref{thm:pretzels}]
Suppose that $K$ is the connected sum of pretzel knots $P(2,-3,-q)$
for some $q$'s (all with $q\geq 7$).  By \cite[Proposition~7.1]{HHL}
the tensor product of these $\iota$-complexes have nonvanishing
connected homology, hence the combination of this nonvanishing result
with Proposition~\ref{prop:TotalVanishing} implies that $[K]\in
{\mathcal {C}}/({\mathcal {QA}}+{\mathcal {T}})$ is nontrivial.  In
the statement of the theorem we considered mirrors of the knots
encountered above; since $\HFB_{\rm{red\kotojel conn}}(-K)$ and
$\HFB_{\rm{red\kotojel conn}}(K)$, as well as $[K]$ and $[-K]$ in
${\mathcal {C}}/({\mathcal {QA}}+{\mathcal {T}})$ are trivial at the
same time, the proof is complete.
\end{proof}

Next we verify Theorem~\ref{thm:PretzelLinIndep} from
Section~\ref{sec:intro}.  For a knot $K \subset S^3$ define
$\omega(K)=\min\{n \geq 0 \ | \ U^n\cdot \HFB_{\rm{red\kotojel
    conn}}(K)=0 \}$.

\begin{proof}[Proof of Theorem~\ref{thm:PretzelLinIndep}]
The double branched cover of the pretzel knot $K_q=P(4q+3,-2q-1,
4q+1)$ can be expressed as boundary of the negative definite plumbing
$\Gamma_q$ of Figure~\ref{fig:PretzelPlumbing}.
\begin{figure}[t]
\input{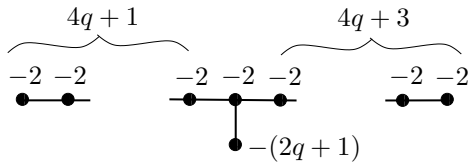}
\caption{The plumbing graph $\Gamma _q$.}
\label{fig:PretzelPlumbing} 
\end{figure}
The associated graded root $R_q$ was partially computed in
\cite{HKL}. According to \cite{HKL} the top part of $R_q$, describing
the truncated Heegaard Floer group
$\HFm_{\geq\delta(K_q)-2q}(\Sigma(K_q))$, looks like the oriented
graph of Figure~\ref{fig:gradedroot}. 
\begin{figure}[t]
\input{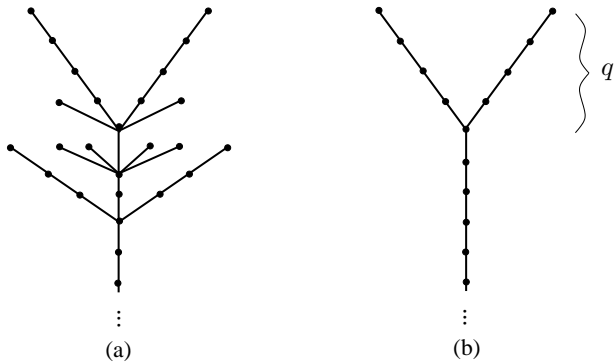}
\caption{Schematic picture of the graded root associated to $\Gamma _q$ in (a)
and its monotone subroot in (b).
Notice that there is no element of top degree fixed by the reflection.} 
\label{fig:gradedroot} 
\end{figure}

The monotone sub-root $M_q \subset R_q$ of a
graded root of this form has been determined in \cite[Theorem~6.1]{DM}, 
providing the graded roots of Figure~\ref{fig:gradedroot}(b)
(see also \cite[Figure~4]{DM}), yielding
\[
\text{HFB}_{{\rm {conn}}}^{-}(K_q)= \F[U] \oplus \F[U]/(U^{q}) .
\]  
The application of \cite[Proposition~7.1]{HHL} calculates
$\text{HFB}_{{\rm {conn}}}^{-}(K)$ for a knot $K$ that is a positive linear
combination of the pretzel knots $K_q$:
\[
\text{HFB}_{{\rm {conn}}}^{-}(K_{q_1}\# \dots \# K_{q_s})= 
\F[U] \oplus \bigoplus_{i=1}^s \F[U]/U^{q_i} \ .
\]
In particular, for a knot of this form we have that
\begin{equation}\label{computation}
\omega(K_{q_1}\# \dots \# K_{q_s})=\max_i q_i \ . 
\end{equation} 

Suppose that there is a non-trivial linear relation among the classes
represented by the pretzel knots $K_q$ in
$\mathcal{C}/(\mathcal{T}+\mathcal{QA})$. By grouping the summands of
such a linear relation according to their sign we end up with an
identity of the form
\[K_{a_1}\# \dots \# K_{a_s} = K_{b_1}\# \dots \# K_{b_l} \# P\]
in the knot concordance group $\mathcal{C}$, for some $P \in
\mathcal{T}+\mathcal{QA}$. Without loss of generality we can assume
that the relation is reduced, \text{i.e.} that $a_i \not=b_j$ for each
$i$ and $j$. On the other hand as a consequence of
Equation~\eqref{computation}  we have that
\[\max_i a_i= \omega(K_{a_1}\# \dots \# K_{a_s})=\omega (K_{b_1}\# \dots \# K_{b_l} \# P )= \omega (K_{b_1}\# \dots \# K_{b_l})=\max_j b_j \] 
a contradiction. This shows that the subgroup $\langle [K_q]\mid q\,
{\rm {odd\, and }}\, q\geq 7\rangle\subset {\mathcal {C}}/({\mathcal
  {T}}+{\mathcal {QA}})$ generated by the above pretzel knots is
isomorphic to $\Z ^{\infty}$, concluding the proof.
\end{proof}

\bibliography{coverbiblio} \bibliographystyle{plain}
\end{document}